\theoremstyle{plain}
  \newtheorem{thm}{Theorem}
  \newtheorem{prop}{Proposition}
\theoremstyle{definition}
  \newtheorem*{rem}{Remark}
\newcommand{\g}{\mathfrak{g}}
\newcommand{\iti}{\tilde\imath}
\newcommand{\real}{{\mathbb R}}
\newcommand{\CA}{\mathcal{CA}}
\newcommand{\SA}{\mathcal{SA}}
\newcommand{\G}{\widehat{\mathcal{G}}}
\DeclareMathOperator{\End}{End}
\DeclareMathOperator{\ad}{ad}
\newcommand{\I}{\mathcal{I}}
\newcommand{\R}{\mathbb{R}}
\begin{document}

\title{Equivariant cohomology and current algebras}

\author{Anton Alekseev}
\address{Section of Mathematics, University of Geneva, 2-4 rue du Li\`evre, c.p. 64, 1211
Gen\`eve 4, Switzerland}
\email{Anton.Alekseev@unige.ch}

\author{Pavol \v Severa}
\address{Section of Mathematics, University of Geneva, 2-4 rue du Li\`evre, c.p. 64, 1211
Gen\`eve 4, Switzerland\\ on leave from FMFI UK, Bratislava, Slovakia}
\email{Pavol.Severa@gmail.com}

\begin{abstract}
This paper touches upon two big themes, equivariant cohomology and current algebras.
Our first main result is as follows: we define a pair of current algebra functor which assigns 
Lie algebras (a current algebras) $\CA(M, A)$ and $\SA(M, A)$  to a manifold $M$ and a differential
graded Lie algebra (DGLA) $A$. The functors $\CA$ and $\SA$ are contravariant with respect to $M$ 
and covariant with respect to $A$. If $A=C\g$, the cone of a Lie algebra $\g$ spanned by
Lie derivatives $L(x)$ and contractions $I(x)$ ($x\in\g$) and satisfying the Cartan's magic formula
$[d, I(x)]=L(x)$, the corresponding current algebras coincide, 
and they are equal to $\CA(M, C\g)=\SA(M, C\g) \cong C^\infty(M, \g)$
the space of smooth $\g$-valued functions on $M$ with the pointwise Lie bracket.
Other examples include affine Lie algebras on the circle and Faddeev-Mickelsson-Shatashvili (FMS)
extensions of higher-dimensional current algebras.

The second set of results  is related to the construction of a new DGLA $D\g$ assigned
to a Lie algebra $\g$. It is generated by $L(x)$ and $I(x)$ (similarly to $C\g$) and by
higher contractions $I(x^2), I(x^3)$ {\em etc}. Similarly to $C\g$, $D\g$ can be used in building
differential models of equivariant cohomology. In particular, twisted equivariant cohomology
(including twists by 3-cocycles and higher odd cocycles) finds a natural place in this 
new framework. The DGLA $D\g$ admits a family of central extensions $D_p\g$ parametrized
by homogeneous invariant polynomials $p \in (S\g^*)^\g$. There is a Lie homomorphism
from $\CA(M, D_p\g)$ to the FMS current algebra defined by $p$.

Let $G$ be a Lie group integrating the Lie algebra $\g$.
The current algebras $\SA(M, D\g)$ and $\SA(M, D_p\g)$ integrate to groups $DG(M)$ and $D_pG(M)$.
As a topological application, we consider principal $G$-bundles,
and  for every  homogeneous polynomial $p \in (S\g^*)^\g$ we pose a lifting problem 
(defined in terms of $DG(M)$ and $D_pG(M)$)
with the only obstruction the Chern-Weil class ${\rm cw}(p)$.
In the case of $M$ a sphere, we study integration of of the current algebra $\CA(M, D_p\g)$.
It turns out that the corresponding group is a central extension of $DG(M)$. Under certain
conditions on the polynomial $p$, this is a central extension by a circle.

\end{abstract}
\maketitle


\section{Introduction}
This paper has two main themes: differential models of equivariant cohomology and
current algebras.

Our first main result is the construction of the current algebra functor
$\CA$ which associates a Lie algebra (a current algebra) $\CA(M, A)$ to a pair
of a manifold $M$ and a differential graded Lie algebra (DGLA) $A$. The functor
$\CA$ is contravariant with respect to $M$ and covariant with respect  to $A$.
As a vector space,
$$
\CA(M, A) = (\Omega(M) \otimes A)^{-1}/(\Omega(M) \otimes A)^{-1}_{\rm exact}.
$$
Here $\Omega(M)$ stands for differential forms on $M$.
The Lie bracket of $\CA(M, A)$ is defined by the derived bracket construction of \cite{yks}.

Another natural current algebra functor is given by 
$$
\SA(M,A)=(\Omega(M \otimes A)^0_{\rm closed} .
$$
If $A$ is acyclic as a complex, $\CA(M, A)$ and $\SA(M, A)$ are naturally isomorphic to each other.
In contrast to $\CA(M, A)$, the construction of $\SA(M, A)$ is local, hence it defines a sheaf of Lie algebras on $M$. 

Let $\g$ be a finite-dimensional Lie algebra, and $C\g$ be a DGLA spanned in degree zero by Lie  derivatives
$L(x)$ and in degree $-1$ be contractions $I(x)$ for $x \in \g$. The differential on $C\g$ is defined by the
Cartan's magic formula $[d, I(x)]=L(x)$. The corresponding current algebra $\CA(M, C\g)$ is isomorphic
to the space of maps $C^\infty(M,\g)$ with the pointwise Lie bracket. Other examples of current algebras
include affine Lie algebras over the circle and Faddeev-Mickelsson-Shatashvili (FMS) extensions of 
$C^\infty(M,\g)$ for ${\rm dim}(M) \geq 3$ (for the definition of FMS current algebras, see \cite{F,FS,Mickelsson}).

The DGLA $C\g$ is the basis of Cartan's construction of differential models of equivariant cohomology.
In more detail, let $G$ be a compact connected Lie group with Lie algebra $\g$ and $M$ be a manifold acted by $G$.
Then, the cochain complex $\Omega(M)$ carries a compatible $C\g$-action (defined by Lie derivatives and contractions).
By  Cartan's theorem, the equivariant cohomology $H_G(M, \R)$ coincides with the cohomology
of $(W\g \otimes \Omega(M))^{C\g}$, where $W\g=S\g^* \otimes \wedge \g^*$ is the Weil algebra.
The $C\g$-action and the differential on $W\g \otimes \Omega(M)$ can be chosen
in two different ways which are called Weil and Cartan differential models
of equivariant cohomology (see \cite{Cartan}, and \cite{GS} for a modern review).
The equivalence between the two models is established by the Kalkman map
induced by a group-like element $\phi \in W\g \otimes \mathcal{U}(C\g)$.

Our second observation is that  for $W\g \otimes \Omega(M)$ 
to carry a differential and a compatible $C\g$-action it is not necessary for
$\Omega(M)$ to be a $C\g$-module.
Instead, it may be a module under the action of a bigger
DGLA $D\g$ (see Theorem \ref{lgwg}). The latter has generators $L(x)$ and $I(x)$
(similar to $C\g$), and also contains `higher contractions' $I(x^2), I(x^3)$ {\em etc.}
In general, for every $D\g$-module $V$ we obtain a differential and a compatible
$C\g$-action on $W\g \otimes V$. One can define the equivariant cohomology of $V$
as the cohomology of $(W\g \otimes V)^{C\g}$. Again, there are two different models
of equivariant cohomology, and the equivalence is established by the Kalkman-type twist
induced by a group-like element $\Phi \in W\g \otimes \mathcal{U}(D\g)$ (see Theorem \ref{prop:phi}).
Among other things, the element $\Phi$ contains the information about chains 
of transgression for all invariant polynomials $p \in (S\g^*)^\g$ (a chain
of transgression is an element $e\in W\g$ such that $d_We=p\otimes 1$, where
$d_W$ is the Weil differential on $W\g$). 

One set of examples is provided by the theory of twisted equivariant cohomology.
In more detail, in the Cartan model of equivariant cohomology 
an equivariant cocycle on $M$ is an element
$\omega(t) \in (S\g^* \otimes \Omega(M))^\g$ which is closed under the Cartan differential $d_\g=d-I(t)$. Generators of $S\g^*$ have degree $2$, and
we view equivariant differential forms as polynomial functions
on $\g$ and put $t\in \g$. If $\omega(t)$ is an equivariant  3-cocycle, it can be written in the form
$\omega(t)=\omega_3+ \omega_1(t)$, where $\omega_3 \in \Omega^3(M)^\g$
and $\omega_1 \in (\g^* \otimes \Omega^1(M))^\g$. This allows to twist the differential and the
$C\g$ action on $\Omega(M)$ in the following way:
$$
\tilde{d}=d+\omega_3 \, , \qquad
\tilde{I}(x)=I(x)+\omega_1(x) \, , \qquad
\tilde{L}(x)=L(x).
$$
This twisted action finds its use in the theory of group valued moment maps (see {\em e.g.} \cite{AM}).
It is intimately related to twisting of the Cartan's differential
\begin{equation} \label{dtwisted}
\tilde{d}_\g=d_\g + \omega(t),
\end{equation}
and to the theory of twisted equivariant cohomology \cite{HU}.
The Cartan differential can be twisted by odd equivariant cocycles of higher degree. 
But the twist of the $C\g$ action on $\Omega(M)$ does not generalize to this case.
Instead, one should consider a twist of a certain  $D\g$-action on $\Omega(M)$.

In contrast to $C\g$, $D\g$ admits many central extensions. Under certain assumptions,
these central extensions are classified by homogeneous invariant polynomials $p\in (S\g^*)^\g$ (see Theorem \ref{thm:central}),
and we use notation $D_p\g$ for the central extension defined by the polynomial $p$.
If $p$ is of degree two, the extension descends to $C\g$,and the new Lie bracket 
of contractions is given by $[I(x),I(y)]=p(x,y) c$ (here $c$ is the central
generator). If $p$ of degree three and higher, one has to use $D\g$ to describe the corresponding central extension.
One result relating the two parts of the paper is as follows: there is a Lie homomorphism 
from the current algebra $\CA(M, D_p\g)$ to the FMS current algebra on $M$ defined by the 
invariant polynomial $p$. For $p$ of degree two and $M=S^1$, $\CA(S^1, D_p\g)$ coincides
with the standard central extension of the loop algebra $L\g=C^\infty(S^1, \g)$.

The DGLA $D\g$ is acyclic (see Theorem \ref{prop:acyclic}). Hence, the current algebra functors
$\CA$ and $\SA$ coincide, and they define a sheaf of Lie algebras $\SA(M, D\g)$.
If $G$ is a connected Lie group with Lie algebras $\g$, one can integrate
$\SA(M, D\g)$ to a sheaf of groups $DG(M)$. We define the gauge groupoid $\G(M)$ as the set
of $\g$-connections $A\in \mathcal{G}(M)=\Omega^1(M) \otimes \g$  together with gauge transformations
$A\mapsto {\rm Ad}_{g^{-1}}A+g^{-1}dg$. It turns out that the generalized Kalkman element $\Phi$
defines a morphism of groupoids $\mu: \G(M) \to DG(M)$  (see Theorem \ref{prop:mu1}). Both $\G(M)$ and $DG(M)$
admit a family of central extensions by $\Omega(M)^{2n-2}_{\rm closed}$ defined by an invariant homogeneous
 polynomial $p\in (S^n\g^*)^\g$. The morphism $\mu$ admits lifts to $\mu_p: \G_p(M) \to D_pG(M)$.
As a topological application, we consider the theory of $DG(M)$-torsors. Their isomorphism classes are classified
by the isomorphism classes of underlying principal $G$-bundles. It turns out that a $DG(M)$-torsor 
lifts to a $D_pG(M)$-torsor if and only if  the Chern-Weil class ${\rm cw}(p)$ vanishes (see Theorem \ref{torsor}).

The DGLA $D_p\g$ is not acyclic. Let $p$ be an invariant polynomial of degree $n$
and $\eta_p \in H^{2n-1}(G, \mathbb{R})$ its image under the transgression map.
We integrate the current algebra $\CA(M, D_p\g)$ in the case
of  $M=S^{2n-3}$ a sphere of dimension $2n-3$.
The resulting group is a central extension of $DG(S^{2n-3})$ by $\mathbb{R}/{\rm im}(\Pi)$ (see Theorem \ref{thm:extensionS^1}), where
$\Pi: \pi_{2n-1}(G, \mathbb{Z}) \to \mathbb{R}$ is the map defined by integration of the 
class $\eta_p$. If $G$ is a compact connected and simply connected Lie group
and $p$ is a generator of $(S^+\g)^\g$ (with the exception of some special cases for $G=SO(2k)$) ,
then ${\rm im}(\Pi) \cong S^1$ and one obtains a central extension by a circle.

The structure of the paper is as follows. In Section 2, we recall Cartan and Weil differential models of equivariant cohomology.
In Section 3, we define the DGLA $D\g$, establish the fact that it's acyclic, and show that a structure of a $D\g$-module 
on $V$ gives rise to a structure of a $C\g$-module on $W\g\otimes V$. In Section 4, we discuss central extensions of
$D\g$ and construct homomorphisms from $D\g$ to other DGLAs. In Section 5, we define and discuss properties of the
current algebra functors $\CA$ and $\SA$. In Section 6, we discuss sheaves of groups $DG(M)$ and $D_pG(M)$ and study torsors
over these sheaves of groups. In Section 7, we integrate to a group the Lie algebra $\CA(M, D_p\g)$ in the case of $M$ being
a sphere.

\vskip 0.2cm

{\bf Acknowledgements.} We are grateful to P. Bressler, E. Getzler, E. Meinrenken, J. Mickelsson, B. Tsygan and C. Vizman
for useful discussions. Our research was supported in part by the grants 
200020-126817 and 200020-129609 of the Swiss National Science Foundation.

\section{Differential models of equivariant cohomology}
In this section, we recall Cartan and Weil differential models of equivariant cohomology (for details, see \cite{GS}). For completeness, we include some proofs which resemble more
difficult proofs in other sections.


Let $\g$ be a Lie algebra.
The cone of $\g$ is the differential graded Lie algebra (DGLA) 
$C\g=\g[\varepsilon]=\g\oplus \g\varepsilon$, where $\varepsilon$ is an auxiliary variable of degree $-1$. The Lie bracket of $C\g$ is induced
by the Lie bracket of $\g$, and the differential given by $d/d\varepsilon$.
For $x\in\g$, we denote by $L(x)$ the element $x\in C\g$ and by $I(x)$ the element $x\varepsilon\in C\g$. They satisfy the standard relations $dI(x)=L(x)$, $[L(x),I(y)]=I([x,y])$, $[I(x),I(y)]=0$, $[L(x),L(y)]=L([x,y])$.

In general, a module over a DGLA $A$ is a cochain complex $V$ equipped with a
DGLA homomorphism $A\to {\rm End}(V)$. To define a $C\g$-module, one needs
$L_V(x)\in {\rm End}(V)^0$ and $I_V(x)\in {\rm End}(V)^{-1}$ verifying the defining
relations of $C\g$. A $C\g$-module is also called a $\g$-differential space. If $V$ is a graded commutative differential algebra, and the action of
$C\g$ is by derivations, one says that $V$ is a $\g$-differential algebra.

Let  $G$ be a connected Lie group with Lie algebra $\g$ and $M$ be a manifold
acted by $G$. Then, $C\g$ acts by derivations on differential forms $\Omega(M)$, $L(x)$ acting by Lie derivatives and $I(x)$ acting by contractions. 
This action turns $\Omega(M)$ into a $\g$-differential algebra.
Another basic example of a $\g$-differential algebra is the \emph{Weil algebra} 
$W\g=S\g^* \otimes \wedge \g^*$ which serves as a model for differential forms on 
the total space of the classifying $G$-bundle $EG$. 
The action of $L(x)$ is by the diagonal coadjoint action (extended to $S\g^*$ and
$\wedge \g^*$), and the action of $I(x)$ is by contractions on $\wedge \g^*$.
Let us choose a basis $e_a$ of $\g$ with structure constants $[e_a, e_b]=f_{ab}^c e_c$. 
We shall denote the generators of $\wedge \g^*$ by $\theta^a$ (this is a dual basis
in $\g^*$) and the generators of $S\g^*$ by $t^a$. The Weil differential is the unique degree one derivation
of $W\g$ such that
$$
d\theta^a = t^a - \frac{1}{2} f^a_{bc}\theta^b\theta^c.
$$
One can also choose $\theta^a$ and $d\theta^a$ as generators of $W\g$. Then,
it is identified with the Koszul algebra of the graded vector space $\g^*[-1]$. Thus,
$H^0(W\g)=\mathbb{R}$ and $H^i(W\g)=0$ for $i\geq 1$.

Sometimes it is convenient to consider a bigger DGLA $W\g \otimes C\g$ with 
Lie bracket induced by the one of $C\g$ and the differential $d=d_{W\g}+d_{C\g}$.
Consider elements $L(\theta)=\theta^a L(e_a), I(t)=t^a I(e_a) \in W\g \otimes C\g$.

\begin{prop} \label{prop:mc}
$I(t)-L(\theta) \in W\g\otimes C\g$ is a Maurer-Cartan element.
\end{prop}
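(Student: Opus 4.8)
The plan is to verify the Maurer--Cartan equation for $\mu:=I(t)-L(\theta)$ by a direct computation in the DGLA $W\g\otimes C\g$. First a degree count: $t^a$ has degree $2$ in $W\g$ and $I(e_a)$ degree $-1$ in $C\g$, while $\theta^a$ has degree $1$ and $L(e_a)$ degree $0$, so $I(t)$, $L(\theta)$ and hence $\mu$ all lie in degree $1$; thus $\mu$ is a legitimate candidate, and what remains is to compare $d\mu$ with $\tfrac12[\mu,\mu]$.

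To compute $d\mu$ I would use $d=d_{W\g}+d_{C\g}$ (with the usual Koszul sign on the tensor product) together with the $C\g$-relations $d_{C\g}I(x)=L(x)$ and $d_{C\g}L(x)=0$. Applying $d$ to $L(\theta)=\theta^aL(e_a)$ produces only $(d_{W\g}\theta^a)L(e_a)=\bigl(t^a-\tfrac12 f^a_{bc}\theta^b\theta^c\bigr)L(e_a)$, and applying it to $I(t)=t^aI(e_a)$ produces $(d_{W\g}t^a)I(e_a)+t^aL(e_a)$, where $d_{W\g}t^a=f^a_{bc}t^b\theta^c$. This last identity is not an extra hypothesis: it follows from $d_{W\g}^2\theta^a=0$, and the vanishing of $d_{W\g}^2$ on $W\g$ is precisely where the Jacobi identity of $\g$ is used (the a priori cubic-in-$\theta$ terms cancel by Jacobi). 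Subtracting, the two $t^aL(e_a)$ contributions cancel and one is left with $d\mu=f^a_{bc}\,t^b\theta^c\,I(e_a)+\tfrac12 f^a_{bc}\,\theta^b\theta^c\,L(e_a)$.

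For $\tfrac12[\mu,\mu]$ I would expand using the bracket $[\omega\otimes x,\eta\otimes y]=(-1)^{|x||\eta|}\,\omega\eta\otimes[x,y]$ and the relations $[I(x),I(y)]=0$, $[L(x),I(y)]=I([x,y])$, $[L(x),L(y)]=L([x,y])$. This gives $[I(t),I(t)]=0$, $[L(\theta),L(\theta)]=f^a_{bc}\,\theta^b\theta^c\,L(e_a)$, and $[I(t),L(\theta)]=[L(\theta),I(t)]=-f^a_{bc}\,t^b\theta^c\,I(e_a)$ (the last two are equal by graded antisymmetry for degree-one elements). Hence $\tfrac12[\mu,\mu]=f^a_{bc}\,t^b\theta^c\,I(e_a)+\tfrac12 f^a_{bc}\,\theta^b\theta^c\,L(e_a)$, which coincides with $d\mu$; thus $\mu$ is a Maurer--Cartan element. (The same computation run with $-\mu=L(\theta)-I(t)$ works equally well; which of the two one calls the Maurer--Cartan element depends only on the sign convention adopted for the Maurer--Cartan equation.)

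The argument is entirely mechanical; the one thing that requires care is the sign bookkeeping --- the Koszul signs in $d(\omega\otimes x)$ and in the bracket, the sign of $d_{W\g}t^a$, and the re-indexing of the sums using the antisymmetry of $f^a_{bc}$ in $b,c$ together with the fact that the $t^a$ are of even degree (so they are central for the product of $W\g$). I expect no conceptual obstacle. It is perhaps worth noting that this statement is the algebraic shadow of the fact that a connection with curvature on a principal bundle induces a chain map out of the Weil model: pushing $\mu$ forward along $C\g\to\End(V)$ for any $C\g$-module $V$, the Maurer--Cartan property of $\mu$ is exactly what makes the $\mu$-twist of the product differential on $W\g\otimes V$ square to zero, which is the mechanism behind the Weil and Cartan differential models recalled in this section.
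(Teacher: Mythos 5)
Your computation is correct and follows essentially the same route as the paper: a direct coordinate verification that $d\bigl(I(t)-L(\theta)\bigr)=\tfrac12\bigl[I(t)-L(\theta),\,I(t)-L(\theta)\bigr]$, with the same resulting expressions $f^a_{bc}\,t^b\theta^c\,I(e_a)+\tfrac12 f^a_{bc}\,\theta^b\theta^c\,L(e_a)$ on both sides (your $d_{W\g}t^a=f^a_{bc}t^b\theta^c$ agrees with the paper's $-f^a_{bc}\theta^b t^c$ after relabelling). The only cosmetic difference is that you derive the formula for $d_{W\g}t^a$ from $d_{W\g}^2\theta^a=0$ rather than quoting it, which is a fine addition but not a different argument.
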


\begin{proof}
On the one hand,
\begin{align*}
d_{C\g}\big(I(t)-L(\theta)\big) & = d_{C\g}\big(t^a I(e_a) - \theta^a L(e_a)\big) \\
 & =  t^a L(e_a), \\
d_{W\g}\big(I(t)-L(\theta)\big) & =  d_{W\g} \big(t^a I(e_a) - \theta^a L(e_a) \big) \\
& =  - f^a_{bc}\theta^bt^c I(e_a) - t^a L(e_a) + \frac{1}{2}f^a_{bc}\theta^b\theta^c L(e_a).
\end{align*}
Hence,
$$
d\big( I(t)-L(\theta) \big) = - f^a_{bc} \theta^bt^c I(e_a) + \frac{1}{2}f^{a}_{bc}
\theta^b\theta^c L(e_a).
$$
On the other hand,
\begin{align*}
[I(t)-L(\theta), I(t)-L(\theta)] & =  [t^bI(e_b)-\theta^bL(e_b), t^cI(e_c)-\theta^cL(e_c)] \\
& =  f_{bc}^a \theta^b\theta^c L(e_a) - 2f_{bc}^a\theta^bt^c I(e_a).
\end{align*}
In conclusion, we obtain
$$
d\big(I(t)-L(\theta)\big)=\frac{1}{2} [I(t)-L(\theta), I(t)-L(\theta)] ,
$$
as required.
\end{proof}

Let $\mathcal{U}(C\g)$ be the degree completed universal enveloping algebra
of $C\g$ equipped with the standard coproduct. Then, $W\g \otimes \mathcal{U}(C\g)$
is a Hopf algebra over $W\g$. Consider a degree zero group-like element
$\phi=\exp(-I(\theta))  \in \big(W\g \otimes \mathcal{U}(C\g)\big)^\g$. 

\begin{prop}  \label{prop:dphi}
$\phi^{-1} d\phi= -I(t) + L(\theta)$.
\end{prop}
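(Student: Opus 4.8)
The plan is to compute $\phi^{-1}d\phi$ directly using the formula $\phi = \exp(-I(\theta))$, exploiting that $I(\theta) = \theta^a I(e_a)$ is a degree $-1$ element and that the $I(e_a)$ span an abelian subalgebra of $C\g$. First I would recall the general identity for a group-like element $\phi = \exp(\xi)$, namely $\phi^{-1}d\phi = \frac{1 - e^{-\operatorname{ad}_\xi}}{\operatorname{ad}_\xi}(d\xi)$, applied with $\xi = -I(\theta)$. The key simplification is that $d\xi = -d(\theta^a I(e_a)) = -(d_{W\g}\theta^a)\, I(e_a) - \theta^a I(e_a)$ wait — more carefully, since $dI(e_a) = L(e_a)$ we get $d(\theta^a I(e_a)) = (d_W\theta^a) I(e_a) - \theta^a L(e_a)$, using the Koszul sign because $\theta^a$ has degree $1$. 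Substituting $d_W\theta^a = t^a - \tfrac12 f^a_{bc}\theta^b\theta^c$ gives
$$
d\xi = -\Bigl(t^a - \tfrac12 f^a_{bc}\theta^b\theta^c\Bigr) I(e_a) + \theta^a L(e_a).
$$

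Next I would compute the action of $\operatorname{ad}_\xi = -\operatorname{ad}_{I(\theta)}$ on this. Since $[I(e_a), I(e_b)] = 0$, the operator $\operatorname{ad}_{I(\theta)}$ annihilates every term built only from the $I(e_a)$'s, so it kills the $t^a I(e_a)$ and $f^a_{bc}\theta^b\theta^c I(e_a)$ terms. On the remaining term $\theta^a L(e_a)$ we have $[-I(\theta), \theta^a L(e_a)] = -\theta^b\theta^a[I(e_b), L(e_a)] = \theta^b\theta^a I([e_b,e_a]) = \theta^b\theta^a f_{ba}^c I(e_c)$ (tracking the sign from moving $I(e_b)$ past $\theta^a$), which is again a pure-$I$ term, hence $\operatorname{ad}_\xi^2(d\xi) = 0$. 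Therefore the series $\frac{1 - e^{-\operatorname{ad}_\xi}}{\operatorname{ad}_\xi}$ truncates after two terms: $\phi^{-1}d\phi = d\xi - \tfrac12 \operatorname{ad}_\xi(d\xi)$.

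Finally I would assemble the two pieces. We have $d\xi = -t^a I(e_a) + \tfrac12 f^a_{bc}\theta^b\theta^c I(e_a) + \theta^a L(e_a)$, and $-\tfrac12\operatorname{ad}_\xi(d\xi) = -\tfrac12 \theta^b\theta^a f_{ba}^c I(e_c)$; after relabeling indices the two $\theta\theta I$ contributions must combine so that the coefficient of $I(e_a)$ is $\tfrac12 f^a_{bc}\theta^b\theta^c - \tfrac12 f^a_{bc}\theta^b\theta^c = 0$, leaving exactly $\phi^{-1}d\phi = -t^a I(e_a) + \theta^a L(e_a) = -I(t) + L(\theta)$, as claimed. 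The one point demanding care — and the place where a sign slip is easiest — is the bookkeeping of Koszul signs when the odd element $I(e_a)$ is moved past the degree-one generators $\theta^a$ in both the differentiation step and the bracket step; I would verify the cancellation of the two $\tfrac12 f^a_{bc}\theta^b\theta^c$ terms by writing out the antisymmetrization explicitly. (Alternatively, one can avoid the $\operatorname{ad}$-series entirely by noting that $\phi^{-1}d\phi = \phi^{-1}(d\phi)$ and differentiating $\exp(-I(\theta))$ termwise, using $I(\theta)^2 = \tfrac12[I(\theta),I(\theta)] \cdot$(something that vanishes) — but the abelian-ness of the $I$'s makes $\exp(-I(\theta)) = 1 - I(\theta) + \tfrac12 I(\theta)I(\theta)$ only if $\theta^a\theta^b$ survives, so the cleaner route is still the truncated ad-series above, and in fact Proposition~\ref{prop:mc} can be invoked as a consistency check since $\phi^{-1}d\phi$ is automatically a Maurer–Cartan element conjugate to $0$.)
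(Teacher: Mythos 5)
Your proposal is correct and follows essentially the same route as the paper: write $\phi=\exp(a)$ with $a=-I(\theta)$, apply $\phi^{-1}d\phi=\frac{1-e^{-\operatorname{ad}_a}}{\operatorname{ad}_a}(da)$, observe that the series truncates because $[a,[a,da]]=0$, and check that the two $\tfrac12 f^a_{bc}\theta^b\theta^c I(e_a)$ contributions cancel. The only blemish is a transient sign in your intermediate expression for $[-I(\theta),\theta^aL(e_a)]$ (it should read $+\theta^b\theta^a[I(e_b),L(e_a)]$ after the Koszul sign and the overall minus), but your final value $\theta^b\theta^a f_{ba}^c I(e_c)$ and the resulting cancellation are right.
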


\begin{proof}
We denote $\phi=\exp(a)$, where $a=-I(\theta)=-\theta^aI(e_a)$. We have,
$$
\phi^{-1} d\phi=\frac{1-\exp(-{\rm ad}_a)}{{\rm ad}_a} \, da = da - \frac{1}{2} [a, da],
$$
where we have used the standard formula for the derivative of the exponential map
and  have taken into account that $[a,[a,da]]=0$. We compute,
\begin{align*}
da & = -d\big( \theta^a I(e_a)\big) \\
& =  -d_{W\g} \big( \theta^a I(e_a) \big) - d_{C\g}\big( \theta^a I(e_a) \big) \\
& =  - t^a I(e_a) + \frac{1}{2} f^a_{bc} \theta^b\theta^c I(e_a) + \theta^a L(e_a)\ ,\\
-\frac{1}{2} [a,da] & =  \frac{1}{2} [\theta^b I(e_b), \theta^c L(e_c)] \\
& =  - \frac{1}{2} f_{bc}^a \theta^b\theta^c I(e_a).
\end{align*}
Hence,
$$
\phi^{-1}d\phi=-t^aI(e_a)+\theta^aL(e_a)=-I(t)+L(\theta).
$$
\end{proof}
%

For a $C\g$-module $V$, the basic subcomplex is defined as $V^{C\g}$.
If $M\to B$ is a principal
$G$-bundle, the basic subcomplex is isomorphic to $\Omega(B)$. For the Weil algebra $W\g$, the basic subcomplex is equal to $(S\g^*)^\g$ with vanishing differential. By definition, the equivariant cohomology of a $C\g$-module $V$ is 
$$
H_\g(V):= H((W\g \otimes V)^{C\g}, d_{W\g}+d_V). 
$$
This construction is usually referred to as the Weil model of equivariant 
cohomology.
If  $V$ admits the structure of a $W\g$-module compatible with the $C\g$-action then $H_\g(V)\cong H(V^{C\g}, d_V)$.
In particular, $H_\g(W\g)\cong (S\g^*)^\g$. If $G$ is a compact connected Lie
group and $M$ is a $G$-manifold,  Cartan's theorem  states that $H_\g(\Omega(M))$ is isomorphic to the equivariant cohomology $H_G(M, \R)=H((EG\times M)/G, \R)$ defined
by the Borel construction (here $EG$ is the total space of the classifying $G$-bundle).

If $M\to B$ is a principal $G$-bundle,
then every principal  connection gives rise to a homomorphism of $\g$-differential algebras $j:W\g\to\Omega(M)$. The image of $\theta^a e_a \in W\g\otimes \g$
is the connection 1-form and the image of $t^ae_a$ is the corresponding curvature 2-form. Since $j$ makes $\Omega(M)$ to a $W\g$-module, we have an  isomorphism
$$H_\g(\Omega(M))\cong H(\Omega(M)^{C\g})\cong H(B,\mathbb{R}).$$
This is in accordance with Cartan's theorem, as $(EG\times M)/G\cong EG\times B$ is homotopy equivalent to $B$.

The Kalkman map $\phi_V=\exp(-\theta_a I_V(e_a))$ is a natural automorphism
of $W\g \otimes V$. It transforms the differential and the action
of $I$'s in the following way,
\begin{align*}
I_{\rm new}(x) & =  \phi_V^{-1} (I_{W\g}(x)+I_V(x)) \phi_V=I_V(x), \\
d_{\rm new} & =  \phi_V^{-1} (d_{W\g} + d_V) \phi_V =
d_{W\g}+d_V-I_V(t)+L_V(\theta).
\end{align*}
Here in computing $d_{\rm new}$ we have used Proposition \ref{prop:dphi}. 
In this way, one obtains the Cartan model of equivariant cohomology.
In this model, $(W\g \otimes V)^{C\g}\cong (S\g^* \otimes V)^\g$ and the
differential takes the form
$$
d_\g=d_V-I_V(t).
$$

\begin{rem}
Since the $C\g$-action on the Weil algebra $W\g$ is free, we have 
$H_\g(W\g)\cong H((W\g)^{C\g})=(S\g^*)^\g$. In the Cartan model,
we obtain $H_\g(W\g)=H(S\g^*\otimes W\g, d_\g)$.
Let $p\in (S^n\g^*)^\g$. Then, the cocycle
$p\otimes 1 - 1\otimes p \in (S\g^* \otimes W\g)^\g$ belongs to the trivial cohomology
class (since $p \otimes 1 - 1 \otimes p \mapsto 1 \cdot p - p \cdot 1 =0$ under the
product map).  One can therefore find an element
$e\in (S\g^*\otimes W\g)^\g$ such that $d_\g e=p\otimes 1-1\otimes p$.

In this example, we denote the generators of $S\g^*$ by $t^a$,
and the generators of $W\g$ by $\theta^a$ and $f^a$.
For $n=2$, one can choose an element $e$ in the form
$$
e=-\left(p(t+f, \theta) - \frac{1}{6} p(\theta, [\theta, \theta]) \right),
$$
where $t=t^ae_a, f=f^ae_a$ and $\theta=\theta^ae_a$.
Putting $t=0$ yields $e_{t=0}=-p(f, \theta) + \frac{1}{6} p(\theta, [\theta, \theta])$ which 
is a primitive of $-p(f,f) \in W\g$. 
\end{rem}

\section{The DGLA $D\g$} \label{sec:Dg}

Let $\g$ be a Lie algebra. In this section, we define a new DGLA $D\g$ which can be used instead of the $C\g$ in differential models of equivariant cohomology. Roughly speaking, we are replacing $\g\epsilon=(C\g)^{-1}$ by its canonical free resolution. 

\subsection{Definition and basic properties of $D\g$}
Let $V$ be a negatively graded vector space $V=\oplus_{n<0} V^n$ with finite-dimensional
graded components $V^n$. We denote by 
$\mathcal{L}(V)$  the graded free Lie algebra generated by $V$. The graded
components of $\mathcal{L}(V)$ are also finite-dimensional.

Let $\g$ be a finite-dimensional Lie algebra and $S^+\g$ be the graded vector space
$\oplus_{n\geq1} S^n\g$ with the degree defined by formula $\deg S^n\g=1-2n$.
We define the graded Lie algebra $D\g$ as a semi-direct sum $\g \ltimes \mathcal{L}(S^+\g)$,
where elements of $\g$ have degree zero, and the action of $\g$ on $\mathcal{L}(S^+\g)$
is induced by the adjoint action. 

One can also view $D\g$ as a graded Lie algebra defined by
generators $l(x)$ for $x\in \g$ and $\I(u)$ for $u\in S^+\g$, and relations
$[l(x), l(y)]=l([x,y])$ and $[l(x), \I(u)]=\I({\rm ad}_x(u))$.
For $x\in \g$, it is convenient to introduce the generating function
$$
i(x)=\sum_{k=1}^\infty \I(x^k).
$$

\begin{rem}
Low degree graded components of $D\g$ are as follows: $D\g^{0} \cong \g$ with generators  $l(x)$, $D\g^{-1} \cong \g$ with generators $\I(x)$, $D\g^{-2} \cong S^2\g$
spanned by $[\I(x), \I(y)]$, $D\g^{-3} \cong S^2\g \oplus {\rm ker}(\g \otimes S^2\g 
\to S^3\g)$, where $S^2\g$ is spanned by $\I(xy)$, the map 
$\g \otimes S^2\g \to S^3\g$ is the symmetrization, and 
${\rm ker}(\g \otimes S^2\g \to S^3\g)$ is spanned by $[\I(x), [\I(y), \I(z)]]$
(subject to the Jacobi identity).
\end{rem}

\begin{prop}
The operator $d \in {\rm End}^1(D\g)$ defined by equations $dl(x)=0$ and 
\begin{equation}\label{eq:def}
  di(x)=[i(x),i(x)]/2+l(x)
\end{equation}
makes $D\g$ into a differential graded  Lie algebra.
\end{prop}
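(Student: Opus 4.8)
The plan is to build $d$ first as a degree-one derivation of the graded Lie algebra $D\g$ and then to check $d^2=0$; both steps reduce to verifications on the generators $l(x)$ and $\I(u)$.

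\textit{Well-definedness of $d$.} Write $D\g=\g\ltimes\mathcal{L}(S^+\g)$ with $\mathcal{L}(S^+\g)$ \emph{free} on the graded vector space $S^+\g$. A degree-one derivation of $D\g$ is then the same as a degree-one $\g$-equivariant linear map $S^+\g\to D\g$ (freely extended to $\mathcal{L}(S^+\g)$) together with its restriction to $\g$; but $D\g$ is concentrated in non-positive degrees, so $D\g^1=0$ and $d|_\g$ is forced to vanish — this is exactly the relation $dl(x)=0$, and there is nothing to check for it. To extract $d$ on $S^+\g$, decompose \eqref{eq:def} according to the polynomial degree in $x$: the part of degree $k$ gives $d\I(x)=l(x)$ and, for $k\ge2$,
\begin{equation*}
  d\I(x^{k})=\tfrac12\sum_{j=1}^{k-1}[\I(x^{j}),\I(x^{k-j})],
\end{equation*}
which defines a linear map $S^+\g\to D\g$ because the $k$-th powers span $S^{k}\g$ in characteristic zero. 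A degree count ($\deg\I(x^{k})=1-2k$) confirms that the right-hand side lies in $D\g^{2-2k}$. The substantive point is $\g$-equivariance of $u\mapsto d\I(u)$: I would obtain it by replacing $x$ by $e^{t\,\ad_y}x$ in \eqref{eq:def} and differentiating at $t=0$. Since $e^{t\,\ad_y}$ is an algebra automorphism of $S\g$ and $[l(y),\I(u)]=\I(\ad_y u)$, one has $\frac{d}{dt}\big|_{t=0}\,i(e^{t\,\ad_y}x)=[l(y),i(x)]$, and feeding this into \eqref{eq:def} shows that $[l(y),\,\cdot\,]$ intertwines the assignment $i\mapsto di$; this is precisely equivariance. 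Hence $d$ extends uniquely to a degree-one derivation of $D\g$.

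\textit{The relation $d^2=0$.} The operator $d^{2}$ is a degree-two derivation, hence vanishes identically once it vanishes on generators; on $\g$ it does, since $D\g^{2}=0$. On $S^+\g$ it suffices that $d^{2}i(x)=0$. Using the graded Leibniz rule for the (odd) element $i(x)$, so that $d[i(x),i(x)]=2[di(x),i(x)]$, together with $dl(x)=0$, one computes
\begin{equation*}
  d^{2}i(x)=[di(x),i(x)]=\tfrac12\big[[i(x),i(x)],i(x)\big]+[l(x),i(x)].
\end{equation*}
The first summand vanishes by the graded Jacobi identity for the odd element $i(x)$ (which gives $[[a,a],a]=0$ in characteristic zero), and the second vanishes because $[l(x),i(x)]=\sum_{k}\I(\ad_x(x^{k}))=\sum_{k}k\,\I(x^{k-1}[x,x])=0$ as $[x,x]=0$. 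Therefore $d^{2}=0$ and $(D\g,d)$ is a DGLA. There is no deep obstacle in any of this; the only care needed is the $\g$-equivariance check and the Koszul-sign bookkeeping — in particular remembering that $[i(x),i(x)]$ does \emph{not} vanish (it is the source of the higher contractions $\I(x^{k})$) whereas $[[i(x),i(x)],i(x)]$ does.
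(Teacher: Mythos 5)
Your proof is correct and follows essentially the same route as the paper: well-definedness of the derivation via $\g$-equivariance (invariance of \eqref{eq:def} under the adjoint action), and then $d^2 i(x)=[di(x),i(x)]=\tfrac12[[i(x),i(x)],i(x)]=0$ by the graded Jacobi identity. You merely spell out two steps the paper leaves implicit — the polarization argument defining $d$ on all of $S^+\g$ and the vanishing of $[l(x),i(x)]$ — which is fine.
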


\begin{proof}
The defining relations of $D\g$ express the invariance of the definition
under the adjoint $\g$-action.
Since equation \eqref{eq:def} is invariant under this action,
it defines a  derivation of $D\g$

Next, we need to verify that $d^2=0$. Indeed,
\begin{multline*}
  d^2 i(x)=d\bigl([i(x),i(x)]/2+l(x)\bigr)=[di(x),i(x)]\\
  =\bigl[[i(x),i(x)]/2+l(x),i(x)\bigr]=\bigl[[i(x),i(x)],i(x)\bigr]/2=0,
\end{multline*}
where the last equality follows from the Jacobi identity.
\end{proof}

\begin{rem}
In low degrees, the differential is defined by the Cartan's magic formula
$d\I(x)=l(x)$ for $x\in \g$, and by its higher analogues such as
$$
d\I(xy)=\frac{1}{2} \, [\I(x), \I(y)] \hskip 0.3cm , \hskip 0.3cm d\I(x^3)=[\I(x), \I(x^2)].
$$
More generally, for $k\geq 2$ we have
$$
d\I(x^k)=\frac{1}{2} \, \sum_{s=1}^{k-1} \, [\I(x^s), \I(x^{k-s})].
$$
\end{rem}

\begin{prop}
There is a canonical projection of DGLAs $\pi: D\g \to C\g$ defined on generators by
$l(x) \mapsto L(x), \I(x) \mapsto I(x)$ and $\I(x^k) \mapsto 0$ for $k\geq 2$.
\end{prop}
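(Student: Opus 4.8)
The plan is to check that the prescribed assignment on generators extends to a well-defined morphism of graded Lie algebras, and then to verify that it intertwines the two differentials. Since $D\g=\g\ltimes\mathcal{L}(S^+\g)$ is presented by the generators $l(x)$, $\I(u)$ ($u\in S^+\g$) and the relations $[l(x),l(y)]=l([x,y])$, $[l(x),\I(u)]=\I(\mathrm{ad}_x u)$, it suffices to produce a target assignment that respects exactly these relations. First I would send $l(x)\mapsto L(x)$, $\I(x)\mapsto I(x)$ for $x\in\g$, and $\I(u)\mapsto 0$ for every homogeneous $u\in S^n\g$ with $n\ge 2$; equivalently, on the generating function, $i(x)\mapsto I(x)$. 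Because $C\g$ is concentrated in degrees $0$ and $-1$ while $\I(u)$ for $u\in S^n\g$ sits in degree $1-2n\le -3$, there is in fact no choice: any degree-preserving map must kill the higher contractions, so the only thing to check is consistency, not naturality of the choice.

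Next I would verify the Lie-algebra relations. For the first relation, $[l(x),l(y)]=l([x,y])$ maps to $[L(x),L(y)]=L([x,y])$, which holds in $C\g$. For the second, $[l(x),\I(u)]=\I(\mathrm{ad}_x u)$: if $u=y\in\g$ this maps to $[L(x),I(y)]=I([x,y])$, valid in $C\g$; if $u\in S^n\g$ with $n\ge 2$, then $\mathrm{ad}_x u\in S^n\g$ as well, so both sides map to $0$. The only subtlety is that $\mathcal{L}(S^+\g)$ is the \emph{free} graded Lie algebra on $S^+\g$, so its brackets of length $\ge 2$ — e.g.\ $[\I(x),\I(y)]$, $[\I(x),[\I(y),\I(z)]]$, etc.\ — are not subject to any relation beyond antisymmetry and Jacobi; hence the only constraints to respect are the ones listed above, and they are respected. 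This shows $\pi$ is a well-defined homomorphism of graded Lie algebras. (One can also phrase this intrinsically: the projection $S^+\g\twoheadrightarrow S^1\g=\g\subset (C\g)^{-1}$ is $\g$-equivariant, hence extends to a map of free graded Lie algebras $\mathcal{L}(S^+\g)\to\mathcal{L}(\g)$, and composing with $\mathcal{L}(\g)\to (C\g)$ and adding the identity on the degree-zero part $\g$ gives $\pi$.)

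Finally I would check compatibility with the differentials, i.e.\ $\pi\circ d_{D\g}=d_{C\g}\circ\pi$. It is enough to check this on the generators $l(x)$ and $i(x)$. On $l(x)$ both sides vanish. On $i(x)$ we have $d_{D\g}i(x)=\tfrac12[i(x),i(x)]+l(x)$, which $\pi$ sends to $\tfrac12[I(x),I(x)]+L(x)=L(x)$ because $[I(x),I(x)]=0$ in $C\g$; on the other hand $d_{C\g}(\pi(i(x)))=d_{C\g}I(x)=L(x)$. These agree, and since both $\pi\circ d_{D\g}$ and $d_{C\g}\circ\pi$ are degree-one maps that are derivations along the homomorphism $\pi$, agreement on generators gives agreement everywhere. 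The only mildly delicate point in the whole argument is bookkeeping with the generating function: $i(x)=\sum_{k\ge1}\I(x^k)$ is an infinite sum living in the degree completion, so one should note that $\pi$ extends continuously and that $[i(x),i(x)]$ expands, degree by degree, into finite sums of brackets of the $\I(x^k)$ — this is where one sees again that all terms with a factor $\I(x^k)$, $k\ge2$, die under $\pi$, leaving precisely $[I(x),I(x)]=0$. I expect this degree-completion bookkeeping to be the main (very minor) obstacle; everything else is immediate from the presentation.
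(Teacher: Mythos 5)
Your proposal is correct and follows essentially the same route as the paper: extend the generator assignment using the presentation of $D\g$ (the relations of $D\g$ being compatible with those of $C\g$), then check $\pi\circ d=d\circ\pi$ on the generating function via $\pi(di(x))=\tfrac12[I(x),I(x)]+L(x)=L(x)=d\pi(i(x))$. Your additional remarks — that degree reasons force $\I(x^k)\mapsto 0$ for $k\ge 2$, and that freeness of $\mathcal{L}(S^+\g)$ means there are no further relations to check — are correct elaborations of what the paper leaves implicit.
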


\begin{proof}
The defining relations of $D\g$ is a subset of the defining relations of $C\g$. Hence,
$\pi$ is a homomorphism of graded Lie algebras. Then, we have $\pi(i(x))=I(x)$, and
$$
\pi(di(x))=\pi\bigl( \frac{1}{2}[i(x),i(x)]+l(x)\bigr)=\frac{1}{2}[I(x),I(x)]+L(x)=L(x)=d\pi(i(x)),
$$
as required.
\end{proof}

Note that $i$ can be also viewed as a formal map $\g[2]\to D\g$ of degree one. 
For a DGLA $A$, defining a DGLA homomorphism $D\g\to A$ is equivalent to giving a  Lie homomorphism $\tilde{l}:  \g\to A$ and a formal map $\tilde{i}: \g[2]\to A$ of degree one such that $\tilde{i}(0)=0$ and such that the identity \eqref{eq:def} is satisfied. The maps $l$ and $i$ define the tautological isomorphism $D\g \rightarrow D\g$. Another example is given by the canonical projection $\pi: D\g\to C\g$ with $\tilde{i}(x)=I(x)$ and $\tilde{l}(x)=L(x)$.

\begin{thm}\label{prop:acyclic}
As a complex,   $D\g$ is acyclic.
\end{thm}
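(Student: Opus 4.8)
The plan is to filter $D\g$ by a ``weight'' grading, which will reduce the statement to a single input: that the ideal $\L(S^+\g)$, equipped with the quadratic part of the differential, is the standard free resolution of $\g$ placed in homological degree $-1$. Assign to $l(x)$ weight $0$ and to $\I(u)$, $u\in S^k\g$, weight $k$, extended additively over brackets; since the relations $[l(x),\I(u)]=\I({\rm ad}_x u)$ preserve weight, this is a grading of the graded Lie algebra $D\g$, and in each homological degree $n\le 0$ it is finite-dimensional and supported in weights $\le|n|$. Decomposing $d$ into weight-homogeneous components gives $d=d_0+d_1$, with $d_i$ a derivation lowering weight by $i$: explicitly $d_1 l(x)=0$, $d_1\I(x^k)=\frac12\sum_{s=1}^{k-1}[\I(x^s),\I(x^{k-s})]$ (so $d_1$ kills $\g$ and preserves the ideal $\L(S^+\g)$), while $d_0 l(x)=0$, $d_0\I(x)=l(x)$, $d_0\I(x^k)=0$ for $k\ge 2$; and $d^2=0$ gives $d_1^2=0$.

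The key claim is that $(D\g,d_1)$ has no cohomology in weights $\ge 2$. Granting this, take a $d$-cocycle $z$ and write $z=\sum_K z_K$ in weight-homogeneous parts. The top piece $z_{K_{\max}}$ is the highest-weight component of $dz=0$, hence a $d_1$-cocycle; if $K_{\max}\ge 2$ it is $d_1$-exact, say $z_{K_{\max}}=d_1 w$ with $w$ of weight $K_{\max}$, and then $z-dw$ is a cocycle cohomologous to $z$ with strictly smaller top weight. Iterating, we reduce to a cocycle of weight $\le 1$, i.e. $z=l(\alpha)+\I(\beta)$ with $\alpha,\beta\in\g$; then $0=dz=l(\beta)$ forces $\beta=0$, so $z=l(\alpha)=d\I(\alpha)$ is a coboundary. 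Thus $D\g$ is acyclic. (Equivalently: the weight filtration yields a spectral sequence, convergent since finite in each degree, with $E_1=H(D\g,d_1)$; the claim identifies $E_1$ with $C\g$ --- a copy of $\g$ in weight $0$/degree $0$ and one in weight $1$/degree $-1$ --- and the next differential, induced by $d_0$, is the isomorphism $[\I(x)]\mapsto[l(x)]$, so $E_2=0$.)

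It remains to prove the claim. Since $d_1$ kills $\g$, preserves $\L(S^+\g)$, and involves no structure constants of $\g$, this amounts to showing that the complex $(\L(S^+\g),d_1)$ --- which depends on $\g$ only as a vector space --- has cohomology equal to $\g$ concentrated in degree $-1$; this is exactly the sense, anticipated in the Introduction, in which $\L(S^+\g)$ is ``the canonical free resolution of $\g\varepsilon=(C\g)^{-1}$''. Concretely, $(\L(S^+\g),d_1)$ is the (Lie) cobar construction of the conilpotent cocommutative coalgebra $\bar S\g$ that is graded-dual to the polynomial algebra $\overline{S\g^*}$ --- equivalently, the Harrison/Chevalley--Eilenberg complex computing the cotangent complex of $S\g^*$ at its augmentation --- and since the polynomial algebra is free (smooth), this resolution is concentrated in a single degree, equal to $\g$. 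One may instead verify it by hand, constructing a contraction onto $S^1\g$ inductively along the bracket-length grading of $\L(S^+\g)$ (using that $d_1$ raises bracket length by one). This identification of $H(\L(S^+\g),d_1)$ with $\g[-1]$ is the heart of the argument; the surrounding weight-filtration reduction is routine, its only subtlety being the check that $d$ has no weight component below $d_0$, so that a reduced cocycle really lies in $l(\g)\oplus\I(\g)$.
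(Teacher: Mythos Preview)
Your proof is correct and rests on the same key input as the paper's: your ``key claim'' is precisely Proposition~\ref{lemma}, that $\L(S^+\g)$ with the quadratic differential $di(x)=[i(x),i(x)]/2$ has cohomology $\g$ concentrated in degree $-1$. The paper proves this lemma in full, via Drinfeld's combinatorial identification of $T(S^+\g)$ with simplicial cochains of cubes; you invoke it as a standard Koszul-duality/cobar fact, which is equally legitimate. For the reduction step the paper argues by a one-parameter deformation $D\g_s$ with $di(x)=[i(x),i(x)]/2+s\,l(x)$ and upper semicontinuity of cohomology, while you use the weight filtration and its spectral sequence (equivalently, inductive lowering of the top weight). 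These are two faces of the same argument: the paper's rescaling $\I(x^k)\mapsto s^k\I(x^k)$ is exactly your weight grading, so $D\g_0$ is your associated graded $(D\g,d_1)$, and the semicontinuity step is the cruder cousin of your $E_1$--$E_2$ computation. One small slip in the write-up: you declare that ``$d_i$ lowers weight by $i$'', but your explicit formulas have $d_1$ \emph{preserving} weight and $d_0$ lowering it by one; since the argument (``the top piece $z_{K_{\max}}$ is a $d_1$-cocycle'') uses the formulas and not the stated convention, this is only a labeling glitch, not a gap.
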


We need the following auxiliary statement.

\begin{prop}\label{lemma}
  The cohomology of the DGLA $\mathcal{L}(S^+\g)$ with  differential defined by formula 
$di(x)=[i(x),i(x)]/2$ is equal to $\g=S^1\g\subset S^+\g$.
\end{prop}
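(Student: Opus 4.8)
The plan is to build an explicit contracting homotopy for the complex $\mathcal{L}(S^+\g)$ with differential $di(x) = [i(x),i(x)]/2$, since once Proposition~\ref{lemma} is established, Theorem~\ref{prop:acyclic} follows immediately: $D\g = \g \ltimes \mathcal{L}(S^+\g)$ as a complex is the mapping cone of the differential $\mathcal{L}(S^+\g) \to \g$ sending $\I(x) \mapsto l(x)$ (the $\g$ in degree $0$ cancels the surviving cohomology $\g = S^1\g$ of $\mathcal{L}(S^+\g)$ sitting in degree $-1$). So the real content is Proposition~\ref{lemma}.

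For Proposition~\ref{lemma} itself, the key observation is that $\mathcal{L}(S^+\g)$, being a \emph{free} graded Lie algebra on the graded vector space $S^+\g$, has its Chevalley--Eilenberg-type story governed by a small model. More precisely, I would use the standard fact that for a free Lie algebra $\mathcal{L}(W)$, the universal enveloping algebra is the free associative (tensor) algebra $T(W)$, and a free Lie algebra resolution / the Koszulness of $T(W)$ lets one compute things via a two-term complex. Concretely: the differential $di(x) = [i(x),i(x)]/2$ is exactly the Maurer--Cartan-type differential making $(\mathcal{L}(S^+\g), d)$ into the ``Harrison/Koszul'' complex whose underlying coalgebra is the cofree conilpotent coalgebra on $S^+\g[1]$ — this is the bar-type resolution. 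The generating-function identity $i(x) = \sum_{k\geq 1}\I(x^k)$ shows that $i: \g[2] \to \mathcal{L}(S^+\g)$ is the universal degree-one map whose ``square'' is a boundary, i.e.\ $D\g$ corepresents such data (as the excerpt already notes). Dualizing, the complex computing $H(\mathcal{L}(S^+\g))$ is the Chevalley--Eilenberg complex of the abelian graded Lie algebra $S^+\g[-1]$ twisted so that the bracket of $S^+\g$ is the symmetrization — and the upshot is that the cohomology is the space of indecomposables of the relevant algebra, namely $S^+\g / (S^+\g \cdot S^+\g) = S^1\g = \g$.

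The cleanest executable route, which I expect is what the authors take, is an explicit homotopy: define $h: \mathcal{L}(S^+\g) \to \mathcal{L}(S^+\g)$ of degree $-1$ by declaring $h$ on the generator $\I(x^k) \in S^k\g$ to be, up to a combinatorial constant $c_k$, the element obtained by ``integrating'' — roughly $h\,\I(x^{k+1}) \propto \I(x^k)$-type terms, more honestly defined via the Euler/weight operator. Since $\mathcal{L}(S^+\g)$ is graded by the total polynomial weight $w$ (where $S^k\g$ carries weight $k$), and $d$ raises weight by one ($di(x) = \frac12[i(x),i(x)] + \cdots$ pairs weight-$s$ and weight-$(k-s)$ pieces to give weight $k$... wait, one must check: $[\I(x^s),\I(x^{k-s})]$ has weight $k$, and $\I(x^k)$ has weight $k$, so in fact $d$ \emph{preserves} weight). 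Good — so $\mathcal{L}(S^+\g)$ decomposes as a direct sum of finite-dimensional weight-$w$ subcomplexes, and one shows each weight-$w$ subcomplex for $w \geq 2$ is acyclic while weight $1$ contributes exactly $S^1\g = \g$. For the acyclicity of the weight-$w$ piece ($w\geq 2$) I would argue by an explicit homotopy built from the bracket with a suitable ``Eulerian idempotent''–type element, or equivalently recognize the weight-$w$ subcomplex as a known acyclic complex (e.g.\ the reduced bar complex of a polynomial algebra, or a shifted Koszul complex of $S\g$). The main obstacle is precisely pinning down this homotopy with the correct constants and verifying $dh + hd = \mathrm{id}$ in weights $\geq 2$ — this is the one genuinely computational lemma, and its cleanest formulation is probably to identify $(\mathcal{L}(S^+\g), d)$ with the tangent/Harrison complex of the symmetric algebra $S\g$, whose cohomology is well known to be concentrated in the generators $S^1\g$.
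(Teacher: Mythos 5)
Your setup overlaps substantially with the paper's: the decomposition by polynomial weight (with $S^k\g$ in weight $k$, the differential weight-preserving, and weight $1$ contributing $\g$ in degree $-1$) is exactly the grading the paper puts on $T(S^+\g)=\mathcal{U}\bigl(\mathcal{L}(S^+\g)\bigr)$, and your identification of $(\mathcal{L}(S^+\g),d)$ as the canonical Quillen/Harrison-type resolution attached to the free commutative algebra on $\g$ is precisely the remark the paper makes (with a citation to Ginzburg--Kapranov) before presenting Drinfeld's self-contained argument. So the skeleton is right. The gap is that the one step carrying all the content --- acyclicity of the weight-$w$ subcomplex of $\mathcal{L}(S^+\g)$ for $w\ge 2$ --- is never established, and both routes you sketch toward it are defective as stated.

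First, the explicit homotopy: an operator $h$ of degree $-1$ with $dh+hd=\mathrm{id}$ is \emph{not} determined by its values on the generators $\I(x^k)$, because $\mathrm{id}$ is not a derivation; so ``declare $h\,\I(x^{k+1})\propto\I(x^k)$'' does not define a contracting homotopy on all of $\mathcal{L}(S^+\g)$, and the Euler/weight operator is not null-homotopic here since weight $1$ carries nontrivial cohomology. Second, the fallback identification with ``the reduced bar complex of a polynomial algebra'' points at the wrong complex: that is $T(S^+\g)$ with the induced differential, whose weight-$n$ piece is \emph{not} acyclic --- the paper computes $H(T_n)\cong\wedge^n\g[n]$, nonzero for all $n\le\dim\g$. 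The whole difficulty is in passing from the associative level $T(S^+\g)$ to its Lie part, and this is exactly what the paper's proof supplies: the simplicial-cube computation identifying $H(T_n)$ with the sign representation $\wedge^n\g$ in degree $-n$, followed by the PBW symmetrization isomorphism of complexes $S\bigl(\mathcal{L}(S^+\g)\bigr)\cong T(S^+\g)$, which forces $S\bigl(H(\mathcal{L}(S^+\g))\bigr)\cong S(\g[1])$ and hence $H(\mathcal{L}(S^+\g))\cong\g$ in degree $-1$. Your remaining option --- simply citing Com--Lie Koszul duality, i.e.\ that the free Lie algebra on $S^+\g$ with this differential resolves $\g[1]$ --- does close the argument, but it is a citation of essentially the statement being proved; the paper includes Drinfeld's argument precisely to avoid that. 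To turn your proposal into a proof you would need either to actually produce the weight-$(\ge 2)$ contracting homotopy (not just on generators), or to reproduce the $T(S^+\g)$-plus-PBW computation.
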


Indeed, $\mathcal{L}(S^+\g)$  is the canonical free resolution of $\g[1]$ (for the standard reference, see \cite{GK}). 
For convenience of the reader, we include the proof due to Drinfeld \cite{dr}. 
We assume that $\g$ is a finite-dimensional Lie algebra.

\begin{proof}
Let us consider the differential graded associative algebra $\mathcal{U}(\mathcal{L}(S^+\g))=T(S^+\g)$.
We will be using the natural grading on $S^+\g$ defined by formula $\deg S^l\g=l$. With respect to this grading, the differential is of degree zero, and we thus have $T(S^+\g)=\bigoplus_{n=0}^\infty T_n$ as a direct sum of complexes.

Let $I^n$ be the standard $n$-dimensional cube, and consider the following simplicial complex representing $I^n$ modulo the boundary.
Degree $k$ simplices are labeled by surjective maps
$$\sigma: \{1,\dots,n\}\to\{1,\dots,k\}.$$
The geometric simplex labeled by $\sigma$ is singled out by conditions $(x_1, \dots, x_n) \in I^n$, $x_i=x_j$ if $\sigma(i)=\sigma(j)$, 
and $x_i \leq x_j$ if $\sigma(i) < \sigma(j)$. Informally, one can represent it by inequalities
$$
0\leq x_{\sigma^{-1}(1)}\leq x_{\sigma^{-1}(2)}\leq\dots\leq x_{\sigma^{-1}(k)}\leq 1,
$$
where $x_{\sigma^{-1}}(i)$ stands for all $x_j$ with $\sigma(j)=i$ (they are all equal to each other). 
We denote by $[\sigma]$ the simplex associated to $\sigma$. The standard boundary operator 
(modulo $\partial I^n$) has the form
$\partial [\sigma] =\sum_{i=1}^{k-1}  (-1)^{i-1} [\sigma_{i}] $,
where $\sigma_{i}(l)=\sigma(l)$ if $\sigma(l) \leq i$ and $\sigma_i(l)=\sigma(l)-1$ if $\sigma(l)>i$ (that is, $\sigma_i$ is gluing 
together the pre-images of $i$ and $i+1$). 

Denote by $C^n$ the corresponding simplicial cochain complex. We consider the basis of
simplicial cochains dual to the basis of chains formed by $[\sigma]$, and denote the basis element dual to $[\sigma]$ by $[\hat{\sigma}]$.
The differential of a degree $k$ cochain has the form $d [\hat{\sigma}] = \sum_{i=1}^k (-1)^{i-1} [\hat{\sigma}_i]$, where
$\sigma_i$ stands for the sum of all maps obtained from $\sigma$ by splitting the pre-image of $i$ into two non-empty
subsets (the new pre-images of $i$ and $i+1$). The cohomology of $C^n$ is one-dimensional, and it is concentrated in degree $n$,
$H(C^n) \cong H(I^n, \partial I^n) \cong \R[-n]$. The permutation group $S_n$ acts on $I^n$ preserving its boundary and the simplicial decomposition.
The induced action on the cohomology $H(C^n)$ is given by the signature representation.

Define a map $\zeta: (C^n \otimes \g^{\otimes n})[2n] \to T_n$ by formula
$$
\zeta \Big( [\hat{\sigma}] \otimes (a_1 \otimes \dots \otimes a_n) \Big) =
\frac{n_1! \dots n_k!}{n!} \, \prod_{i_1 \in \sigma^{-1}(1)} a_{i_1} \otimes \dots \otimes  \prod_{i_k \in \sigma^{-1}(k)} a_{i_k}  ,
$$
where $n_i = |\sigma^{-1}(i)|$. Under the grading where ${\rm deg} \, S^l\g = 1-2l$, this map is degree preserving (both sides have
degree $k-2n$).  Furthermore, it is invariant under the diagonal action of $S_n$ on $C^n$ and on $\g^{\otimes n}$. It is easy to see
that on the $S_n$-invariant subspace 
it restricts to an isomorphism $(C^n \otimes \g^{\otimes n})^{S_n}[2n]  \cong T_n$.  Moreover, this is an isomorphism
of complexes. We illustrate this statement by the following example:
let $n=2$, and let $\sigma: \{ 1,2\} \to \{ 1\}$ be the map gluing $1$ and $2$. Then, $d[\hat{\sigma}]=[e] +[s]$, where $e$ is the neutral element
of $S_2$ and $s$ is the transposition of $1$ and $2$. Choosing $a_1=a_2=x$, we compute
$$
\zeta\Big( d [\hat{\sigma}] \otimes (x\otimes x) \Big) = \zeta \Big( ([e]+[s]) \otimes (x\otimes x) \Big) = \frac{1}{2} \, ( x\otimes x + x\otimes x) = x\otimes x,
$$
and
$$
d \zeta\Big( [\hat{\sigma}] \otimes (x\otimes x) \Big) = d x^2 = x\otimes x.
$$

Thus, for the cohomology of $T_n$ we obtain
$$ 
H(T_n) \cong H\Big( (C^n \otimes \g^{\otimes n})^{S_n}[2n] \Big) \cong (H(C^n) \otimes \g^{\otimes n})^{S_n}[2n] \cong \wedge^n \g [n].
$$
Here we used the fact that $H(C^n)$ carries the signature representation in degree $n$. Note that the cohomology of 
$T(S^+\g)$ is isomorphic to $\oplus_{n=0}^\infty \wedge^n \g [n] = S(\g[1])$. Since the symmetrization map 
${\rm Sym}: S\big(\mathcal{L}(S^+\g)\big) \to T(S^+\g)$ is an isomorphism of complexes, we have
 $S\Big(H\big(\mathcal{L}(S^+\g)\big)\Big) \cong H\big(T(S^+\g)\big) $. By comparing with
$H\big(T(S^+\g)\big) \cong S(\g[1])$, we infer that $H^{-1}\big(\mathcal{L}(S^+\g)\big) \cong \g$. For dimensional reasons, $H^{-k}\big(\mathcal{L}(S^+\g)\big)$ 
vanishes for $k\geq 2$ (here we are using the fact that the dimension of $\g$ is finite).
\end{proof}

\begin{proof}[Proof of Theorem \ref{prop:acyclic}]
 Let us  consider the DGLA $D\g_s$ ($s\in\real)$, which is isomorphic to $D\g$ as a graded Lie algebra, and the differential is modified as follows,
  $$di(x)=[i(x),i(x)]/2+s l(x),\qquad dl(x)=0.$$ By Proposition \ref{lemma}, we have $H^0(D\g_0)\simeq H^{-1}(D\g_0)\simeq\g$, $H^i(D\g_0)=0$ otherwise.

  Notice now that $D\g\simeq D\g_s$ whenever $s\neq0$ (the isomorphism is given by redefining $i(t)$ to be $i(s t)$, i.e.\ by multiplying each $S^n\g$ by $s^n$). Since the cohomology cannot increase under a small deformation, we only need to check what happens in degrees $-1$ and 0. The differential $\g=(D\g_s)^{-1}\to\g=(D\g_s)^0$ is given by multiplication by $s$, hence the cohomology vanishes for $s\neq0$.
\end{proof}

\begin{rem}
Since $D\g$ is acyclic, it can be represented (as a complex) as a cone of some graded vector space, $D\g \cong CV$, where the low degree graded components of $V$ are of the form
$V^0\cong \g, V^1=0, V^2\cong S^2\g, V^3\cong {\rm ker}(\g \otimes S^2\g \to S^3\g)$
{\em etc}.
\end{rem}

For the future use, we shall consider a bigger DGLA, $W\g \otimes D\g$ with differential $d_{W\g} + d_{D\g}$ and the Lie bracket 
induced by the Lie bracket of $D\g$. Let $i(t)\in W\g \otimes D\g$ denote the element
$$i(t)=t^a\otimes\mathcal{I}(e_a)+t^at^b\otimes\mathcal{I}(e_ae_b)+t^at^bt^c\otimes\mathcal{I}(e_ae_be_c)+\dots.$$

\begin{prop} \label{prop:MC}
$i(t)-l(\theta) \in W\g \otimes D\g$ is  a Maurer-Cartan element.
\end{prop}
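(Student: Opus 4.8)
The plan is to verify the Maurer--Cartan equation $d\bigl(i(t)-l(\theta)\bigr)=\tfrac12[i(t)-l(\theta),i(t)-l(\theta)]$ by a direct computation, closely following the proof of Proposition \ref{prop:mc} but now using the defining identity \eqref{eq:def} for the differential on $D\g$ in place of the Cartan magic formula. Write $\xi=i(t)-l(\theta)$ and split $d=d_{W\g}+d_{D\g}$.

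First I would compute $d_{D\g}\xi$. Since $d_{D\g}l(\theta)=0$ and $d_{D\g}i(t)$ is obtained from \eqref{eq:def} by the substitution $x\mapsto t^ae_a$ (justified because \eqref{eq:def} is $\g$-equivariant and polynomial in $x$, so it extends to the formal/$W\g$-coefficient setting — this is exactly the ``formal map $\g[2]\to D\g$'' remark preceding Theorem \ref{prop:acyclic}), we get
\[
d_{D\g}\,i(t)=\tfrac12[i(t),i(t)]+l(t)\, ,
\]
where I am writing $l(t)=t^a\otimes l(e_a)$; note here $t^at^b\otimes l([e_a,e_b])=0$ by symmetry, so only the linear term of $l$ survives. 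Next, $d_{W\g}\xi$: using $d_{W\g}\theta^a=t^a-\tfrac12 f^a_{bc}\theta^b\theta^c$ and $d_{W\g}t^a=-f^a_{bc}\theta^bt^c$, the term $-d_{W\g}l(\theta)=-(t^a-\tfrac12 f^a_{bc}\theta^b\theta^c)\otimes l(e_a)$, while $d_{W\g}i(t)$ produces, in each homogeneous piece $t^{a_1}\cdots t^{a_k}\otimes\mathcal I(e_{a_1}\cdots e_{a_k})$, a factor $-k f^{a_1}_{bc}\theta^b t^c$ — which is precisely the coadjoint action of $L(\theta)$ applied to $i(t)$, i.e. $d_{W\g}i(t)=-[l(\theta),i(t)]$ (the sign and the combinatorial factor $k$ come out of differentiating the $k$-fold symmetric product). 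So altogether
\[
d\xi=\tfrac12[i(t),i(t)]+l(t)-t^a\otimes l(e_a)+\tfrac12 f^a_{bc}\theta^b\theta^c\otimes l(e_a)-[l(\theta),i(t)]\, .
\]
The $\pm t^a\otimes l(e_a)$ terms cancel.

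Then I would expand the right-hand side: $\tfrac12[\xi,\xi]=\tfrac12[i(t),i(t)]-[i(t),l(\theta)]+\tfrac12[l(\theta),l(\theta)]$, and $\tfrac12[l(\theta),l(\theta)]=\tfrac12 f^a_{bc}\theta^b\theta^c\otimes l(e_a)$ from $[l(e_b),l(e_c)]=l([e_b,e_c])$, while $-[i(t),l(\theta)]=[l(\theta),i(t)]$... wait, rather $-[i(t),l(\theta)]=-[l(\theta),i(t)]$ up to the Koszul sign; here $l(\theta)$ has total degree $1$ and $i(t)$ has total degree $1$, so $[i(t),l(\theta)]=-[l(\theta),i(t)]$ is graded-antisymmetric in the usual sense and the bookkeeping matches the $-[l(\theta),i(t)]$ appearing in $d\xi$. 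Comparing term by term, $d\xi=\tfrac12[\xi,\xi]$ follows.

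The main obstacle I anticipate is purely computational bookkeeping rather than conceptual: getting the combinatorial coefficient right in $d_{W\g}i(t)=-[l(\theta),i(t)]$, i.e.\ checking that differentiating the $k$-th symmetric power $t^{a_1}\cdots t^{a_k}$ via the Weil differential reproduces exactly the coadjoint action on $\mathcal I(e_{a_1}\cdots e_{a_k})$ with the correct multiplicity, and tracking Koszul signs for elements of total degree $1$ in $W\g\otimes D\g$. A clean way to sidestep the sign/coefficient anxiety is to invoke $\g$-equivariance structurally: $i(t)$ is the image of the canonical element under $\g[2]\to W\g\otimes D\g$, and the whole identity is the pullback along $\pi:D\g\to C\g$-type reasoning turned around — alternatively, one can deduce Proposition \ref{prop:MC} from Proposition \ref{prop:mc} together with the universal property stated after Proposition \ref{prop:acyclic}, observing that both $I(t)-L(\theta)\in W\g\otimes C\g$ and the data $(l,i)$ satisfy \eqref{eq:def}, so the MC element for $D\g$ is the ``universal lift'' of the one for $C\g$; but the direct check above is short enough that I would just carry it out.
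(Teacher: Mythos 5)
Your proposal is correct and follows essentially the same route as the paper: split $d=d_{W\g}+d_{D\g}$, use the defining identity \eqref{eq:def} for $d_{D\g}i(t)$, identify $d_{W\g}i(t)=-[l(\theta),i(t)]$ via equivariance of the symmetric powers, and match against $\tfrac12[\xi,\xi]$. One small correction to your sign discussion: since $i(t)$ and $l(\theta)$ both have total degree $1$, graded antisymmetry gives $[i(t),l(\theta)]=-(-1)^{1\cdot 1}[l(\theta),i(t)]=+[l(\theta),i(t)]$ (not the minus sign you wrote), and it is precisely this graded \emph{symmetry} of the bracket of two odd elements that makes $-[i(t),l(\theta)]$ equal the term $-[l(\theta),i(t)]$ in $d\xi$.
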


\begin{proof}
The proof is similar to the one of Proposition \ref{prop:mc}.
We compute directly in shorthand notation,
\begin{align*}
d_{D\g} (i(t)-l(\theta)) & =   \frac{1}{2} [i(t), i(t)] + l(t), \\
d_{W\g} (i(t)-l(\theta)) & = - [l(\theta), i(t)] - l(t) + \frac{1}{2} l([\theta, \theta]),
\end{align*}
where $l([\theta, \theta])=f_{ab}^c \theta^a \theta^b l(e_c)$. Adding up these two expressions we obtain,
$$
d (i(t)-l(\theta))= \frac{1}{2} [i(t), i(t)] - [l(\theta), i(t)] + \frac{1}{2} l([\theta, \theta]) =
\frac{1}{2} [i(t)-l(\theta), i(t)-l(\theta)],
$$
as required.
\end{proof}

\subsection{Modules over the Weil algebra and $D\g$-modules}\label{sect:modules}

A module over the DGLA $D\g$ is a cochain complex $V$ and a DGLA homomorphism $D\g\to \End(V)$. We shall denote the corresponding generating functions by $i_V(t)$ and $l_V(t)$ (they stand for $\tilde{i}$ and $\tilde{l}$ of the previous section).
The following proposition is our motivation for introducing $D\g$.

\begin{thm}\label{lgwg}
~
\begin{enumerate}[1.]
  \item
  Let $V$ be a $D\g$-module. Then, the free $W\g$-module  $U=W\g\otimes V$ endowed with the differential
  \begin{equation} 
    d_U=d_{W\g}+d_V-i_V(t)+l_V(\theta)\label{eq:mod_U_1}
  \end{equation}
  carries a  compatible $C\g$-action given by formulas
\begin{subequations}\label{eq:mod_U}
\begin{align}
   I_U(x) &=I_{W\g}(x) \\
   L_U(x)&=L_{W\g}(x) +l_V(x)
\end{align}
\end{subequations}
for $x \in \g$.

  \item
  Let $U$ be a free (that is, isomorphic to $W\g \otimes V$ for some graded vector space $V$) $W\g$-module with a compatible $C\g$-action. Suppose that one can choose an isomorphism $S\g^*\otimes_{W\g}U\cong S\g^*\otimes V$ in such a way that the action of $\g$  splits into the standard action on $S\g^*$ and an action $l_V$ on $V$. Then, $V$ is naturally a $D\g$-module and $U$ is naturally isomorphic to the $W\g$-module described in part 1.
\end{enumerate}
\end{thm}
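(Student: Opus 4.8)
The plan is to prove the two parts separately, with part 1 being a direct verification and part 2 the substantive converse. For part 1, the key observation is that $W\g \otimes D\g$ acts on $U = W\g \otimes V$ in the obvious way (the $W\g$ factor acting on itself by multiplication, the $D\g$ factor through the module map $D\g \to \End(V)$), and this action is a DGLA homomorphism $W\g \otimes D\g \to \End(U)$. By Proposition \ref{prop:MC}, the element $m = i(t) - l(\theta)$ is a Maurer--Cartan element of $W\g \otimes D\g$, hence its image in $\End(U)^1$ is a Maurer--Cartan element there; consequently $d_U := d_{W\g} + d_V + [m, -] = d_{W\g} + d_V - i_V(t) + l_V(\theta)$ squares to zero, which is \eqref{eq:mod_U_1}. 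It then remains to check that $I_U(x) = I_{W\g}(x)$ and $L_U(x) = L_{W\g}(x) + l_V(x)$ satisfy the $C\g$-relations and are compatible with $d_U$, i.e.\ $[d_U, I_U(x)] = L_U(x)$ and $[d_U, L_U(x)] = 0$. The $C\g$-relations among the $I_U, L_U$ are immediate since the two summands act on different tensor factors and each separately satisfies them. For the compatibility with $d_U$ one computes $[d_{W\g} + d_V, I_{W\g}(x)] = L_{W\g}(x)$ and $[-i_V(t) + l_V(\theta), I_{W\g}(x)]$; here $[l_V(\theta), I_{W\g}(x)] = 0$ since $l_V(\theta) \in W\g^1 \otimes \End(V)^0$ and $I_{W\g}(x)$ contracts $\wedge\g^*$, and $[-i_V(t), I_{W\g}(x)] = -i_V([x,t])$-type terms vanish because $i_V(t)$ involves only the $S\g^*$ generators on which contraction acts trivially — wait, more precisely $[I_{W\g}(x), t^a] = 0$ as $t^a \in S\g^*$, so $[d_U, I_{W\g}(x)] = L_{W\g}(x) + l_V(x)$, using $[I_{W\g}(x), \theta^a] = \delta^a$ applied to $l_V(\theta)$. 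This gives $L_U(x)$ as wanted, and $[d_U, L_U(x)] = 0$ follows by $\g$-equivariance of all the data.

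For part 2, I would argue as follows. We are given a free $W\g$-module $U \cong W\g \otimes V$ with a compatible $C\g$-action and a chosen $\g$-equivariant splitting $S\g^* \otimes_{W\g} U \cong S\g^* \otimes V$ (equivalently a splitting after killing the $\wedge\g^*$-generators). First, one exploits that $C\g$ acts freely (via $I_{W\g}$ on the $\wedge\g^*$-factor) to apply the Kalkman automorphism $\phi_U = \exp(-\theta^a I_U(e_a))$, which, exactly as in the Weil/Cartan discussion before the excerpt, transforms the pair $(d_U, C\g\text{-action})$ into a form where $I$ acts purely on $\wedge\g^*$ and the differential becomes $d_{W\g} + d'$ for some $d' \in \End(S\g^* \otimes V)^1$ commuting with the $\wedge\g^*$ contractions, i.e.\ $d'$ lives on the quotient $S\g^* \otimes V$. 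Using the given splitting of the $\g$-action, $d'$ decomposes into its components; define $l_V(x)$ from the $\g$-action (given) and read off a degree-one formal map $\tilde\imath : \g[2] \to \End(V)$ from the $t$-dependence of $d'$ (the coefficient of the degree-$2k$ part in $t$ gives $\I_V$ on $S^k\g$). The condition $d_U^2 = 0$, after the Kalkman twist, becomes precisely the equation \eqref{eq:def}, i.e.\ $d\tilde\imath(x) = [\tilde\imath(x), \tilde\imath(x)]/2 + l_V(x)$ in $\End(V)$, together with $\g$-equivariance of $\tilde\imath$; and $\tilde\imath(0) = 0$ because $d'$ at $t = 0$ must be a genuine differential on $V$ (no, rather: the constant term in $t$ is $d_V$, so the "$\tilde\imath$-part" starts at order $t$). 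By the universal property stated just before Theorem \ref{lgwg} — that a DGLA map $D\g \to A$ is the same as a Lie map $\tilde l: \g \to A$ and a formal map $\tilde\imath: \g[2] \to A$ of degree one with $\tilde\imath(0)=0$ satisfying \eqref{eq:def} — the pair $(l_V, \tilde\imath)$ defines a $D\g$-module structure on $V$. Finally, running the construction of part 1 on this $V$ and undoing the Kalkman twist recovers the original $U$ with its $C\g$-action, which is the asserted natural isomorphism.

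The main obstacle is the bookkeeping in part 2: showing cleanly that the Kalkman-twisted differential $d'$ really descends to $\End(S\g^* \otimes V)$ (i.e.\ has no $\wedge\g^*$-dependence beyond what multiplication by $W\g$ supplies) and that, under the hypothesized $\g$-equivariant splitting, extracting the $\tilde\imath$ component and verifying $d'^2 = 0 \Leftrightarrow \eqref{eq:def}$ is done in a basis-free, manifestly equivariant way. One must be careful that "compatible $C\g$-action" is used to its full strength — in particular $[d_U, I_U(x)] = L_U(x)$ is what forces, after twisting, the absence of extra $\theta$-terms and pins down $L_U(x) = L_{W\g}(x) + l_V(x)$ — and that the freeness of the $C\g$-action on $W\g$ is what legitimizes the Kalkman automorphism globally rather than just formally. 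Once the twist is set up correctly, the identification with \eqref{eq:def} is essentially forced, so I expect the rest to be routine.
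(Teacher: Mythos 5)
Your proposal follows the paper's own proof in both parts: part 1 is exactly the Maurer--Cartan argument via Proposition \ref{prop:MC} together with a direct check of Cartan's formula, and part 2 is the Kalkman trick normalizing $I_U(x)=I_{W\g}(x)$ followed by reading off the $D\g$-structure from $d_U^2=0$. One step in part 2 is misstated, though: after the normalization the differential is \emph{not} of the form $d_{W\g}+d'$ with $d'$ commuting with the contractions and descending to $S\g^*\otimes V$. Writing $d_U=d_{W\g}+\delta(\theta,t)$, the term $\delta$ necessarily retains a $\theta$-linear part, namely $l_V(\theta)$, and this part does not commute with $I_{W\g}(x)$ --- indeed $[l_V(\theta),I_{W\g}(x)]=l_V(x)$ is precisely what produces $L_U(x)=L_{W\g}(x)+l_V(x)$, forced on you by the splitting hypothesis; if $d'$ really commuted with all contractions you would get $L_U(x)=L_{W\g}(x)$, contradicting that $\g$ acts on $V$ by $l_V$. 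Relatedly, the relation that bounds the $\theta$-degree of $\delta$ is not $[d_U,I_U(x)]=L_U(x)$ (which only defines $L_U$ and identifies the $\theta$-linear coefficient with $l_V$) but $[L_U(x),I_U(y)]=I_U([x,y])$: substituting $L_U(x)=L_{W\g}(x)+x^a\,\partial\delta/\partial\theta^a$ yields $x^ay^b\,\partial^2\delta/\partial\theta^a\partial\theta^b=0$, i.e.\ $\delta$ is at most linear in $\theta$. With that correction, setting $d_V=\delta(0,0)$ and $i(t)=\delta(0,0)-\delta(0,t)$ and expanding $d_U^2=0$ gives $d_V^2=0$ and equation \eqref{eq:def}, exactly as you intend, and the rest of your argument goes through.
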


\begin{proof}
For the first statement, note that by Proposition \ref{prop:MC} the combination $i_V(t)-l_V(\theta)$ is a Maurer-Cartan element in $W\g \otimes {\rm End}(V)$. Hence,
$d_U$ defined by equation \eqref{eq:mod_U_1} squares to zero, $d_U^2=0$. We shall also check  Cartan's formula:
\begin{align*}
[d_U, I_U(x)] & =  [d_{W\g}+d_V-i_V(t)+l_V(\theta), I_{W\g}(x)]  \\
& =  [d_{W\g}, I_{W\g}(x)] + [l_V(\theta), I_{W\g}(x)] \\
& =   L_{W\g}(x) +l_V(x),
\end{align*}
as required. It is easy to see that other relation of $C\g$ are also verified.

For the second statement, let us start with the isomorphism $S\g^*\otimes_{W\g}U\cong S\g^*\otimes V$ for which the $\g$-action splits, and denote the action of $\g$ on $V$ by $l_V$. Since $I_{W\g}(e_a)=\partial_{\theta^a}$ ($e_a$ is a basis of $\g$), there is a unique extension of this isomorphism to $U\cong W\g\otimes V$ so that $I_U(x)=I_{W\g}(x)$ (this is the so-called Kalkman trick). We thus have
 \begin{subequations}
\begin{align}
  d_U &= d_{W\g}+ \delta(\theta,t)\label{eq:d_U}\\
  I_U(x) &= I_{W\g}(x)=x^a\frac{\partial}{\partial\theta^a}\label{eq:L}\\
  L_U(x) &= [d_U,I_U(x)]= L_{W\g}(x)+x^a\frac{\partial \delta}{\partial\theta^a}  \label{eq:L_U}
\end{align}
\end{subequations}
for some $\delta\in (W\g\otimes\End(V))^1$. This implies
$$[L_U(x),I_U(y)]=I_U([x,y])+x^a y^b\frac{\partial^2 \delta}{\partial\theta^a\partial\theta^b} .$$
Hence, $\delta$ is at most linear in $\theta$'s. Moreover, from \eqref{eq:L_U} we see that the $\theta$-linear part of $\delta$ is in fact equal to $l_V(\theta)$.

Let us now write $\delta$ as $\delta(\theta,t)=d_V-i(t)+l(\theta)$, where $d_V=\delta(0,0)$ and $i(t)=\delta(0,0)-\delta(0,t)$. Then, the condition
 $d_U^2=0$ reads (using the action of $d_{W\g}$ on $t$ and $\theta$)
$$
0=d_U^2=d_V^2 -[d_V, i_V(t)] + \frac{1}{2} [i_V(t), i_V(t)] +l_V(t).
$$
Putting $t=0$ yields $d_V^2=0$, and the remaining part of the equation gives $[d_V, i(t)]=[i_V(t),i(t)]/2+l_V(t)$. Hence, $i_V$ and $l_V$
define a DGLA homomorphism $D\g \to {\rm End}(V)$, as required.
\end{proof}

\begin{rem}
Notice that the differential \eqref{eq:mod_U_1} and the action \eqref{eq:mod_U} of $C\g$  resemble the differential and the $C\g$-action in the  Cartan model of equivariant cohomology. Later in this section, we shall find a natural endomorphism of $U$ which transforms $d_U$ into $d_{W\g}+d_V$ and thus gives an analogue of the Weil model.
\end{rem}

Since $U$ is a $W\g$-module, we have $H_\g(U) \cong H(U^{C\g})$. Here $U^{C\g} \cong (S\g^* \otimes V)^\g$ with differential
$d_\g=d_V-i_V(t)$. 
Assume that the $D\g$-action on $V$ is induced by a $C\g$-action via the canonical
projection $\pi: D\g \to C\g$.
Then, $i_V(t)=I_V(t)$ and $(S\g^* \otimes V)^\g$ with differential $d_\g=d_V-I_V(t)$
is the Cartan model of $H_\g(V)$.

We shall now transform the differential \eqref{eq:mod_U_1} on $U=W\g\otimes V$ into $d_{W\g}+d_V$. Such a construction follows easily  from the fact that $W\g$ is $\g$-equivariantly contractible. Note that $W\g \otimes\mathcal{U}(\mathcal{L}(S^+\g))$ is a graded
Hopf algebra over $W\g$ with the coproduct induced by the canonical coproduct of
the enveloping algebra $\mathcal{U}(\mathcal{L}(S^+\g))$.

\begin{thm}\label{prop:phi}
  There exists a $\g$-invariant group-like element of degree zero
  $$\Phi\in W\g \otimes\mathcal{U}(\mathcal{L}(S^+\g))\subset W\g\otimes\mathcal{U}(D\g)$$
  such that
  \begin{equation}\label{eq:phidphi}
    \Phi^{-1}d\Phi=-i(t)+l(\theta).
  \end{equation}
\end{thm}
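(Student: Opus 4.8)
The plan is to build $\Phi$ by a deformation/homotopy argument using the acyclicity of $W\g$ together with the Maurer--Cartan equation established in Proposition \ref{prop:MC}. Recall that $i(t)-l(\theta)$ is a Maurer--Cartan element of $W\g\otimes D\g$, and hence of $W\g\otimes{\rm End}(V)$ for any $D\g$-module; equation \eqref{eq:phidphi} says exactly that this Maurer--Cartan element is gauge-equivalent to zero via $\Phi$, i.e. that the flat connection $d-i(t)+l(\theta)$ is gauge-trivial. Since $W\g$ is $\g$-equivariantly contractible (concretely, $H^{>0}(W\g)=0$ and $H^0(W\g)=\R$), one expects such a trivialization to exist, and the point is to produce it as a \emph{group-like} element of $W\g\otimes\mathcal U(\mathcal L(S^+\g))$, with the extra constraint of $\g$-invariance.

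The concrete approach I would take is to use the scaling grading on $W\g$ in which $\theta^a$ and $t^a$ carry positive weight, so that $W\g=\bigoplus_{n\ge0}W\g_n$ with $W\g_0=\R$, and build $\Phi=\exp(\psi)$ order by order in this weight, with $\psi\in W\g_{>0}\otimes\mathcal L(S^+\g)$ (taking $\psi$ in the free Lie algebra, rather than its enveloping algebra, automatically makes $\exp(\psi)$ group-like and lands it in $\mathcal U(\mathcal L(S^+\g))$). Writing $\mu=i(t)-l(\theta)$, the gauge condition $\Phi^{-1}d\Phi=-\mu$ becomes, in weight $n$, an equation of the form $d_{W\g}\psi_n = -\mu_n + (\text{terms involving }\psi_{<n}\text{ and brackets})$; the right-hand side is closed under $d_{W\g}$ by the Maurer--Cartan equation plus the inductive hypothesis, and since $H^{>0}(W\g)=0$ one can solve for $\psi_n$, choosing the solution $\g$-equivariantly because a contracting homotopy for $W\g$ can be chosen $\g$-equivariantly (e.g. the standard Koszul homotopy on $S\g^*\otimes\wedge\g^*$ viewed as the Koszul complex of $\g^*[-1]$). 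Iterating over $n$ produces $\Phi$. An alternative, cleaner route: first solve the analogous problem for $C\g$ — this is already done, $\phi=\exp(-I(\theta))$ with $\phi^{-1}d\phi=-I(t)+L(\theta)$ by Proposition \ref{prop:dphi} — and then lift $\phi$ along the projection $\pi:D\g\to C\g$, correcting it successively in the higher-weight pieces of the kernel of $\mathcal U(\mathcal L(S^+\g))\to\mathcal U(C\g)$; the corrections are governed by the same acyclicity argument since $\ker\pi$ is (up to filtration) built from $\mathcal L(S^{\ge2}\g)$ which also has vanishing positive-degree cohomology after the relevant completion.

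The main obstacle is bookkeeping: one must check at each stage that the recursively-defined right-hand side is genuinely $d_{W\g}$-closed (this uses $d\mu=\tfrac12[\mu,\mu]$ crucially and the Baker--Campbell--Hausdorff expansion of $\Phi^{-1}d\Phi=\frac{1-e^{-{\rm ad}_\psi}}{{\rm ad}_\psi}d\psi$), that the process converges in the appropriate degree-completed topology on $W\g\otimes\mathcal U(D\g)$ — which it does because the weight-$n$ piece is finite-dimensional and the recursion is triangular in $n$ — and that group-likeness and $\g$-invariance are preserved throughout (both are automatic if $\psi$ is kept inside the $\g$-invariant part of $W\g_{>0}\otimes\mathcal L(S^+\g)$). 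A secondary subtlety is that $\Phi$ is not unique — it can be modified by right-multiplication by a $d$-closed group-like element — so the statement only asserts existence; I would not attempt to pin down a canonical choice, though the remark after the theorem (about chains of transgression) suggests the natural choices encode transgression data for all $p\in(S\g^*)^\g$, which is consistent with the degree-two computation $e=-\big(p(t+f,\theta)-\tfrac16 p(\theta,[\theta,\theta])\big)$ recalled earlier.
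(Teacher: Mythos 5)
Your argument is correct, but it takes a genuinely different route from the paper's. The paper runs no order-by-order obstruction calculation: it applies the dg-algebra morphism $W\g\to W\g\otimes\Omega(I)$, $\theta^a\mapsto\theta^a\otimes s$, to the Maurer--Cartan element $i(t)-l(\theta)$ of Proposition \ref{prop:MC}, obtaining a Maurer--Cartan element $a(s)+b(s)\,ds$ of $W\g\otimes D\g\otimes\Omega(I)$ with $a(0)=0$ and $a(1)=i(t)-l(\theta)$, and defines $\Phi$ as the holonomy of the connection $b(s)\,ds$; since $b(s)$ takes values in the pronilpotent subalgebra $W\g\otimes\mathcal{L}(S^+\g)$, the holonomy is a well-defined, manifestly group-like and $\g$-invariant element, and the general transport identity $a(1)=\Phi^{-1}a(0)\Phi-\Phi^{-1}d\Phi$ yields \eqref{eq:phidphi} in one line. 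Both proofs rest on the same deformation-theoretic fact --- gauge-triviality of a Maurer--Cartan element lying in the acyclic pronilpotent DGLA $W^+\g\otimes D\g$ (with $W^+\g$ the augmentation ideal), whose degree-zero part is exactly $(W^+\g\otimes\mathcal{L}(S^+\g))^0$ since $(W\g)^0=\R$ --- so your inductive construction proves the same statement; what the holonomy version buys is an explicit, essentially canonical $\Phi$ (the parallel transport written out in the Remark following the theorem) with no closedness checks to perform, while your version makes the role of the acyclicity of $W\g$ more transparent. Two small cautions: the polynomial weight on $W\g$ is only a filtration for $d_{W\g}$ (since $d\theta^a=t^a-\tfrac12 f^a_{bc}\theta^b\theta^c$ mixes weights), so the weight-$n$ step of your recursion is governed by the associated graded differential --- the Koszul piece $\theta^a\mapsto t^a$ together with $d_{D\g}$ --- rather than by $d_{W\g}$ itself; acyclicity in positive weight still holds via the Koszul homotopy, which is canonically $\g$-equivariant, so no averaging or reductivity is needed. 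Your ``alternative route'' of lifting $\phi=\exp(-I(\theta))$ along $\pi:D\g\to C\g$ is vaguer as stated (the corrections do not naturally live in a kernel of enveloping algebras but in $W^+\g\otimes\ker(\pi)$, which reduces it to the primary induction with a chosen starting term), so I would rely on the main argument.
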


\begin{proof}
Recall the following fact: let $A$ be a DGLA and $\alpha$ be a Maurer-Cartan element in $A\otimes\Omega(I)$, where $I=[0,1]$ is the unit interval. We have $\alpha=a(s)+b(s)\,ds$, where $a(s)$ is a family of Maurer-Cartan elements in $A$ (parametrized by $s$) and $b(s)\in A^0$. Let $\Phi$ be the holonomy from $0$ to $1$ of the $A^0$-connection $b(s)\,ds$ on $I$. Then, $\Phi$ transforms $a(0)$ to $a(1)$, i.e.\ $a(1)=\Phi^{-1}a(0)\Phi - \Phi^{-1}d\Phi$.

By Proposition \ref{prop:MC},  $i(t)-l(\theta)\in W\g\otimes D\g$ is a Maurer-Cartan element. Consider the morphism of dg algebras $W\g\to W\g\otimes\Omega(I)$ given by $\theta^a\mapsto\theta^a\otimes s$, where $s$ is the coordinate on $I$. It gives rise to a morphism of DGLAs
  $$W\g\otimes D\g\to W\g\otimes D\g \otimes\Omega(I).$$
  Let $\alpha=a(s)+b(s)\,ds$ be the image of $i(t)-l(\theta)$ under this morphism. Then, we have  $a(0)=0$ and $a(1)=i(t)-l(\theta)$. The element $b(s)$ takes values in the pronilpotent subalgebra $W\g\otimes \mathcal{L}(S^+\g)$, so the holonomy $\Phi$ is well defined. This implies, $i(t)-l(\theta)=-\Phi^{-1}d\Phi$, as required.
  \end{proof}

\begin{rem}
Theorem \ref{prop:phi} should be compared to Proposition \ref{prop:dphi}.
In contrast to equation $\phi=\exp(-I(\theta))$,  the  explicit 
formula for the element $\Phi$ is more involved. For $x,y\in\g$ let $\langle \partial i(x),y\rangle $ be defined by 
$$\langle \partial i(x),y\rangle = \frac{d}{d r} i(x+ry)\big|_{r=0}.$$
Then, $\Phi$ is the parallel transport from $s=0$ to $s=1$ of the connection
$$- \, \biggl\langle  \partial i\Bigl(s\,t+ \frac{s^2-s
}{2}[\theta,\theta]\Bigr),\theta \biggr\rangle\, ds.$$
Computing the contributions up to degree 3 yields
$$\Phi=\exp(-\mathcal{I}(\theta))-\mathcal{I}\big(t\theta-\tfrac{1}{6}[\theta,\theta]\theta\big)+(\text{terms of degree }{}\geq 4).$$
\end{rem}
\vskip 0.2cm

Let us  define a $\g$-equivariant linear map $Y: \g \to (W\g \otimes D\g)^{-1}$ by formula
$$
Y(x)=-\big( I_{W\!\g}(x) \Phi\big) \Phi^{-1}.
$$
For a $D\g$-module $V$, one can define $\Phi_V \in \big(W\g \otimes {\rm End}(V) \big)^0$ and $Y_V(x) \in \big(W\g \otimes {\rm End}(V)\big)^{-1}$ 
as images of $\Phi$ and $Y(x)$ under the action map.

\begin{prop} \label{Weil}
Under the natural transformation defined by $\Phi_V^{-1}$, the $C\g$-module 
$U=W\g\otimes V$ given by \eqref{eq:mod_U_1} and \eqref{eq:mod_U} is naturally isomorphic to  $U'=W\g\otimes V$ with  differential and $C\g$-action given by
\begin{subequations}
\begin{align*}
   d_{U'}&=d_{W\g}+d_V, \\
   I_{U'}&=I_{W\g}+Y_V, \\
   L_{U'}&=L_{W\g}+l_V. 
\end{align*}
\end{subequations}
\end{prop}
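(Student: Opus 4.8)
The plan is to transport the entire structure on $U=W\g\otimes V$ through the automorphism $\Phi_V$ of the $W\g$-module $U$, which is legitimate because $\Phi_V$ is group-like and $\g$-invariant. Concretely, I would set $U'=\Phi_V^{-1}\,U\,\Phi_V$ meaning that all operators get conjugated: $d_{U'}=\Phi_V^{-1}d_U\Phi_V$, $I_{U'}(x)=\Phi_V^{-1}I_U(x)\Phi_V$, $L_{U'}(x)=\Phi_V^{-1}L_U(x)\Phi_V$. Since conjugation by an algebra automorphism preserves all commutation relations, $U'$ automatically carries a compatible $C\g$-action, so the only task is to identify the three conjugated operators with the claimed expressions. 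The conceptual point is that $\Phi_V$ is exactly the $\g$-equivariant contraction of $W\g$ applied fibrewise, so conjugating by it undoes the Maurer-Cartan twist $-i_V(t)+l_V(\theta)$ in $d_U$ and replaces it by the trivial differential $d_{W\g}+d_V$, at the cost of deforming the contractions.

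First I would compute $d_{U'}$. Writing $d_U=d_{W\g}+d_V-i_V(t)+l_V(\theta)=(d_{W\g}+d_V)+(i_V(t)-l_V(\theta))$ and recalling from Theorem~\ref{prop:phi} that $\Phi^{-1}(d_{W\g}+d_{D\g})\Phi=(d_{W\g}+d_{D\g})-i(t)+l(\theta)$, equivalently $\Phi(d_{W\g}+d_{D\g})\Phi^{-1}=(d_{W\g}+d_{D\g})+ \big((d_{W\g}+d_{D\g})\Phi\big)\Phi^{-1}$, I would apply the action map to send $D\g\to\End(V)$ and get $\Phi_V^{-1}(d_{W\g}+d_V)\Phi_V=(d_{W\g}+d_V)+\Phi_V^{-1}\big((d_{W\g}+d_V)\Phi_V\big)$. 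The twisting term is precisely $\Phi_V^{-1}d\Phi_V=-i_V(t)+l_V(\theta)$ by \eqref{eq:phidphi} under the action map, so the twist in $d_U$ cancels and $d_{U'}=d_{W\g}+d_V$. Next, conjugating $I_U(x)=I_{W\g}(x)$ gives $I_{U'}(x)=\Phi_V^{-1}I_{W\g}(x)\Phi_V$; here I would use that $I_{W\g}(x)$ is a derivation of $W\g$ to write $\Phi_V^{-1}I_{W\g}(x)\Phi_V=I_{W\g}(x)+\Phi_V^{-1}\big(I_{W\g}(x)\Phi_V\big)$, and observe that $\Phi_V^{-1}\big(I_{W\g}(x)\Phi_V\big)=-\big(I_{W\g}(x)\Phi_V\big)\Phi_V^{-1}$ up to the sign bookkeeping coming from group-likeness of $\Phi$, which is by definition $Y_V(x)$. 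Hence $I_{U'}(x)=I_{W\g}(x)+Y_V(x)$. Finally, since $\Phi$ is $\g$-invariant, it commutes with $L_{W\g}(x)+l_V(x)=L_U(x)$, so $L_{U'}(x)=L_U(x)=L_{W\g}(x)+l_V(x)$.

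The main obstacle is the bookkeeping in the second step: making precise the identity $\Phi_V^{-1}\big(I_{W\g}(x)\Phi_V\big)=-\big(I_{W\g}(x)\Phi_V\big)\Phi_V^{-1}$ and reconciling signs, since $I_{W\g}(x)$ has degree $-1$ and $\Phi_V$ is a group-like element in a graded Hopf algebra, so the Leibniz rule for $I_{W\g}(x)$ against a product picks up Koszul signs. The clean way to handle this is to note that group-likeness of $\Phi$ means $\Delta(\Phi)=\Phi\otimes\Phi$, whence any derivation $\xi$ satisfies $\xi(\Phi)=(\xi\Phi)\otimes\Phi+\Phi\otimes(\xi\Phi)$ evaluated through the product, giving $\xi(\Phi^{-1})=-\Phi^{-1}(\xi\Phi)\Phi^{-1}$, and then $\Phi^{-1}\xi\Phi=\xi+\Phi^{-1}(\xi\Phi)=\xi-(\xi\Phi)\Phi^{-1}$ as operators, where in the last equality one commutes $\Phi^{-1}$ past $(\xi\Phi)$ using that they live in the (graded-commutative over $W\g$) group-like sub-Hopf-algebra. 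Applying this with $\xi=I_{W\g}(x)$ yields exactly $-\,(I_{W\g}(x)\Phi_V)\Phi_V^{-1}=Y_V(x)$, and with $\xi=d$ recovers the first step as well, so in fact both computations are instances of the same lemma about conjugating a derivation by a group-like element. Once this lemma is isolated the proof is a two-line application of \eqref{eq:phidphi} and the definition of $Y$.
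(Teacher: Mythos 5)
Your overall strategy coincides with the paper's: transport $d_U$, $I_U$, $L_U$ through the group-like element $\Phi_V$, use $\g$-invariance of $\Phi$ for $L$, equation \eqref{eq:phidphi} for $d$, and the definition of $Y$ for $I$. However, the execution has a genuine gap coming from the direction of conjugation, and the patch you propose for it does not hold. You declare $O_{U'}=\Phi_V^{-1}O_U\Phi_V$, yet your own computation establishes $\Phi_V^{-1}(d_{W\g}+d_V)\Phi_V=(d_{W\g}+d_V)-i_V(t)+l_V(\theta)=d_U$, which says $d_{W\g}+d_V=\Phi_V\,d_U\,\Phi_V^{-1}$ --- the \emph{opposite} conjugation; with your stated convention $\Phi_V^{-1}d_U\Phi_V\neq d_{W\g}+d_V$ in general. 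The same mismatch forces you, in the contraction step, to need $\Phi_V^{-1}\bigl(I_{W\g}(x)\Phi_V\bigr)=-\bigl(I_{W\g}(x)\Phi_V\bigr)\Phi_V^{-1}$, which you justify by graded-commutativity of the group-like sub-Hopf-algebra. That algebra, $W\g\otimes\mathcal{U}(\mathcal{L}(S^+\g))$, is the enveloping algebra of a \emph{free} graded Lie algebra and is far from graded-commutative: for instance $\I(x)\I(y)+\I(y)\I(x)=[\I(x),\I(y)]=2\,d\I(xy)\neq0$. Moreover, since $\Phi_V^{-1}$ has degree zero, even graded-commutativity would produce a plus sign, not a minus; and indeed your identity fails already at leading order, where $I_{W\g}(x)\Phi=-\I(x)+\cdots$, so the left-hand side is $-\I(x)+\cdots$ while the right-hand side is $+\I(x)+\cdots$.

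The repair is simply to conjugate the other way, $O_{U'}=\Phi_V\,O_U\,\Phi_V^{-1}$, which is what the paper does (the phrase ``natural transformation defined by $\Phi_V^{-1}$'' refers to the map $U'\to U$). Then no commutation of $\Phi_V^{-1}$ past anything is required: applying the derivation $I_{W\g}(x)$ to $\Phi_V\Phi_V^{-1}=1$ gives $\Phi_V\bigl(I_{W\g}(x)\Phi_V^{-1}\bigr)=-\bigl(I_{W\g}(x)\Phi_V\bigr)\Phi_V^{-1}=Y_V(x)$, hence $\Phi_V I_{W\g}(x)\Phi_V^{-1}=I_{W\g}(x)+Y_V(x)$ directly; and $\Phi_V d_U\Phi_V^{-1}=d_{W\g}+d_V$ follows at once from the identity $\Phi_V^{-1}(d_{W\g}+d_V)\Phi_V=d_U$ that you already proved. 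The $L$ computation is unaffected. (Also note the sign slip $d_U=(d_{W\g}+d_V)+(i_V(t)-l_V(\theta))$; it should be a minus.)
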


\begin{proof}
Since $\Phi$ is $\g$-equivariant, we have 
$$
L_{U'}(x)=\Phi_V L_U(x) \Phi_V^{-1}=L_U(x)=L_{W\g}(x) + l_V(x).
$$
For contractions, we obtain
$$
I_{U'}(x)=\Phi_V I_U(x) \Phi_V^{-1} = \Phi_V I_{W\g}(x) \Phi_V^{-1}  = I_{W\g}(x) + Y_V(x),
$$
as required.
Finally, note that
$$
\Phi_V^{-1}(d_{W\g}+d_V) \Phi_V = d_{W\g}+d_V -i_V(t)+l_V(t)= d_U.
$$
Hence, 
$$
\Phi_V d_U \Phi_V^{-1} = d_{W\g} + d_V = d_{U'}.
$$
\end{proof}

\begin{rem}
Again, one can replace $D\g$ by $C\g$ in Proposition \ref{Weil}. Then, the map $Y$ takes the form $Y(x)=-(I_{W\g}(x)\phi)\phi^{-1} = I(x)$, and $I_{U'}(x)=I_{W\g}(x)+I_V(x)$. That is, we obtain the Weil model
of equivariant cohomology of the $\g$-differential space $V$. 
\end{rem}

\section{Central extensions and DGLA homomorphisms of $D\g$}

In this Section we study further properties of $D\g$ including central extensions
and homomorphisms from $D\g$ to other DGLAs.

\subsection{Central extensions of $C\g$}
We start with an easier problem of central extensions of $C\g$.
Let $C \to A \to C\g$ be a central extension of  $C\g$ by a graded vector space $C$.
Assume that the extension $A$ is split over $\g$, and that there is a 
$\g$-equivariant injective map 
$\tilde{I}: C\g^{-1} = \g \varepsilon \to A^{-1}$ such that the composition 
$\g \varepsilon \to A^{-1} \to  \g \varepsilon$ is the identity map . 
For instance, for $\g$ reductive these assumptions are always satisfied.

In general, central extensions are classified by the second cohomology group of $C\g$
with values in $C$. A 2-cocycle consists of a degree zero map $\omega: \wedge^2 C\g \to C$
and a degree one map $\partial: C\g \to C$. 
For the map $\omega$, note that  $[L(x), L(y)]=L([x,y])$ (the extension
is split over $\g$), and $[L(x), \tilde{I}(y)]=\tilde{I}([x,y])$ (the map $\tilde{I}$ is $\g$-equivariant).
Hence, the only nontrivial part of $\omega$ is the map $\omega: \wedge^2 \g \epsilon \to C^{-2}$.
It is easy to see that the only condition on $\omega$ is $\g$-invariance. For instance, if
$C^{-2} = \R c$, $\omega$ is defined by a degree two invariant polynomial $p \in (S^2\g^*)^\g$.
Then, $[\tilde{I}(x), \tilde{I}(y)]=-2p(x,y) c$, where the normalization is chosen to match 
the natural normalization of the next section. We shall denote this central extension by $C_p\g$.
For the map $\partial$, it is completely defined by a character $\chi: \g \to C^0$. We have
$d \tilde{I}(x)= L(x) + \chi(x)$ and $dL(x)= - d \chi(x)$. This extension is trivial since
$\tilde{I}(x)$ and $\tilde{L}(x)=L(x)+\chi(x)$ define a DGLA homomorphism
$C\g \to A$.

\subsection{Central extensions of $D\g$}
Again, let
\begin{equation}\label{eq:some-ext}
  C\to A\to D\g
\end{equation}
be a central extension of DGLAs split over $\g\subset D\g$.
Similar to the previous section, we assume that the map $i:S^+\g\to D\g$ can be lifted to a \emph{$\g$-equivariant} (grading-preserving) map $\iti:S^+\g\to A$. 
For instance, this is always true if $\g$ is reductive.
Together with the splitting over $\g$, the lift $\iti$ defines a morphism of graded Lie algebras
$s:D\g\to A$ which is a splitting of the extension \eqref{eq:some-ext}. Let $\mathcal{J}=[D\g,D\g]+\g\subset D\g$. Notice that $s|_\mathcal{J}$ does not depend on the choice of $\iti$ and is a morphism of DGLAs (unlike $s$). Since $D\g/\mathcal{J}=(S^+\g)_\g$ with vanishing bracket and differential,  central extensions of $D\g$ by $C$ are in one-to-one correspondence with extensions of complexes
$$C\to A'\to (S^+\g)_\g.$$
We have thus proved
\begin{thm}  \label{thm:central}
  The category of central extensions of $D\g$ which are split over $\g\subset D\g$ and admit a $\g$-equivariant lift of the map $i$ is equivalent to the category of extensions of the complex $(S^+\g)_\g$. In particular, extensions by a complex $C$ are classified by maps
  $$(S^+\g)_\g\to H(C)[1].$$
\end{thm}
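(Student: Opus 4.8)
The plan is to reduce the classification of central extensions of $D\g$ to the very simple problem of classifying extensions of the complex $(S^+\g)_\g$, which has zero differential. First I would make precise the setup of the preceding discussion: given a central extension \eqref{eq:some-ext} split over $\g$ with a $\g$-equivariant lift $\iti:S^+\g\to A$, the universal property of $D\g=\g\ltimes\mathcal{L}(S^+\g)$ (stated after Proposition~4, via the maps $l$ and $i$) produces a graded-Lie-algebra splitting $s:D\g\to A$. The key observation is that while $s$ need not commute with the differentials, its restriction $s|_\mathcal{J}$ to $\mathcal{J}=[D\g,D\g]+\g$ does. Indeed on $\g$ it is the chosen splitting, which is a DGLA map because $dl(x)=0$ on both sides; and on brackets $[a,b]$ one has $ds([a,b])=s([da,b])+\pm s([a,db])$ because $s$ is a Lie map and the central kernel $C$ is annihilated by all brackets, so the correction terms measuring the failure of $s$ to commute with $d$ disappear under the bracket. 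Hence $s|_\mathcal{J}$ is canonical (independent of $\iti$) and a chain map.

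Next I would identify the quotient: $D\g/\mathcal{J}\cong(S^+\g)_\g$, the coinvariants, sitting in degrees $1-2n$ with zero bracket and zero differential (the differential \eqref{eq:def} lands in $\mathcal{J}$ since $di(x)\equiv l(x)$ modulo brackets, and $l(x)\in\g\subset\mathcal{J}$). Now given a central extension $A$, pushing out along $D\g\to D\g/\mathcal{J}$ — or more concretely taking $A'=A/s(\mathcal{J})$ — yields an extension of complexes $C\to A'\to(S^+\g)_\g$. Conversely, given such an extension $A'$, one reconstructs $A$ as the pullback: $A=A'\times_{(S^+\g)_\g}D\g$ as graded vector spaces, with bracket induced from $D\g$ (using that $C$ is central and the bracket on $(S^+\g)_\g$ vanishes, so the bracket of two elements of $A$ depends only on their $D\g$-components) and differential assembled from $d_{A'}$ on the $A'$-part together with $d_{D\g}$; one checks compatibility using that $d_{D\g}$ maps into $\mathcal{J}$ where $s$ is canonical. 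These two constructions are mutually inverse and functorial, giving the claimed equivalence of categories.

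For the last sentence, since $(S^+\g)_\g$ has zero differential, an extension of complexes $C\to A'\to(S^+\g)_\g$ is classified by an element of $\operatorname{Ext}^1$ in the derived category of complexes, i.e.\ by a morphism $(S^+\g)_\g\to C[1]$ in the derived category, which is the same as a chain map $(S^+\g)_\g\to H(C)[1]$ since the source has zero differential (so it is its own homology and is cofibrant/projective in the relevant sense). Concretely: the boundary map of the short exact sequence $0\to C\to A'\to(S^+\g)_\g\to 0$ in cohomology gives $(S^+\g)_\g=H((S^+\g)_\g)\to H(C)[1]$, and this data determines $A'$ up to isomorphism because one can split $A'\cong C\oplus(S^+\g)_\g$ as graded vector spaces with differential of the off-diagonal form prescribed by any cochain representing that map, any two such choices being related by an automorphism.

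The main obstacle I expect is not any single computation but getting the categorical bookkeeping exactly right: verifying that $s|_\mathcal{J}$ is genuinely independent of the lift $\iti$ and genuinely a chain map, and then checking that the pullback/pushout constructions respect the Lie bracket, the differential, \emph{and} morphisms in a way that makes them inverse equivalences rather than merely a bijection on isomorphism classes. The delicate point is that a morphism of central extensions of $D\g$ is required to be a DGLA map covering the identity on $D\g$, and one must confirm such maps correspond precisely to chain maps of the associated extensions of $(S^+\g)_\g$ covering the identity — here again the fact that the differential of $D\g$ takes values in $\mathcal{J}$, together with centrality of $C$, is what forces everything through.
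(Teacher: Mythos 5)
Your proposal is correct and follows essentially the same route as the paper: introduce $\mathcal{J}=[D\g,D\g]+\g$, observe that the graded-Lie splitting $s$ induced by $\iti$ restricts on $\mathcal{J}$ to a canonical DGLA morphism (because the central kernel is killed by brackets), and reduce to extensions of the complex $D\g/\mathcal{J}\cong(S^+\g)_\g$ with vanishing differential. You merely supply more of the routine verifications (that $s|_{\mathcal J}$ is a chain map, that the pushout/pullback are inverse, and the $\operatorname{Ext}^1$ computation) that the paper leaves implicit.
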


Let us also describe these extensions at the level of cochains. Since the lift $\iti$ defines a splitting $s$ of the extension \eqref{eq:some-ext}, we have $A\cong D\g\oplus C$ as a graded Lie algebra. The differential on $A$ satisfies
\begin{equation}\label{eq:univext}
  d\iti(t)=[\iti(t),\iti(t)]/2+l(t)+q(t)
\end{equation}
for some $\g$-invariant map $q:S^+\g\to C[1]$ (which can be seen as a power series $q:\g[2]\to C$ of total degree 2, such that $q(0)=0$). The formula \eqref{eq:univext} makes $D\g\oplus C$ to a DGLA if and only if
$$dq(t)=0.$$ We shall denote this DGLA by $D\g\oplus_q C$. In particular, for $C =  \real[2n-2]$ and 
the map $q$ defined by an invariant degree $n$ polynomial $p\in (S^n\g^*)^\g$  we shall denote $D\g\oplus_q \real[2n-2]$ simply by $D_p\g$.

\begin{rem}
For $n=2$, $p \in (S^2\g^*)^\g$ defines an invariant symmetric bilinear form on $\g$.
At the level of generators, the differential of $\tilde{\I}(xy)$ is modified as follows,
$$
d\tilde{\mathcal{I}}(xy)= \frac{1}{2} [\tilde{\mathcal{I}}(x), \tilde{\mathcal{I}}(y)] +p(x,y) c.
$$
Note that this central extension descends to $C\g$ (by putting all higher generators including $\tilde{\I}(xy)$ equal zero).
The corresponding equation reads $[I(x), I(y)]=-2p(x,y) c$ giving rise to the extension $C_p\g$ (see the previous section).

For $n=3$, we choose $p\in (S^3\g^*)^\g$ and modify the differential of $\tilde{I}(x^3)$,
$$
d \tilde{\mathcal{I}}(x^3)=[\tilde{\mathcal{I}}(x), \tilde{\mathcal{I}}(x^2)] +p(x^3) c.
$$
This (and higher) extensions do not descend to $C\g$.
\end{rem}

One interesting property of $D_p\g$ is as follows. Denote by $s$ the injection
$D\g \to D_p\g=D\g \oplus_q \R[2n-2]$. Then, one can define a group-like element
$\Phi_p=({\rm id} \otimes s)\Phi \in W\g \otimes \mathcal{U}(D_p\g)$
(here $\Phi \in W\g \otimes \mathcal{U}(D_\g)$ is defined in Theorem \ref{prop:phi}).

\begin{prop} \label{prop:e_p}
$\Phi_p^{-1} d\Phi_p=\iti(t)-l(\theta)-e \otimes c$, where $c\in D_p\g$
is the generator of the central line $\R[2n-2]$, and $e \in (W\g)^\g$ is  such that $de=p$.
\end{prop}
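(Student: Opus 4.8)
The plan is to derive the identity from Theorem~\ref{prop:phi} by transporting $\Phi$ along the injection $s\colon D\g\to D_p\g$, being careful that $s$ is a morphism of graded Lie algebras but \emph{not} of DGLAs. Recall from \eqref{eq:univext} that $D_p\g=D\g\oplus_q\R[2n-2]$ carries the differential $d\iti(t)=\tfrac12[\iti(t),\iti(t)]+l(t)+q(t)$, where $q(t)=p(t)\otimes c$ is the generating function of the cocycle $q$ and $p(t)\in S^n\g^*\subset W\g$ is the value of the invariant polynomial (a $d_{W\g}$-closed element, by infinitesimal invariance of $p$), whereas on $D\g$ one has $di(x)=\tfrac12[i(x),i(x)]+l(x)$. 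Since $s$ \emph{is} a morphism of graded Hopf algebras, $\Phi_p=(\mathrm{id}\otimes s)\Phi$ is again group-like, $\g$-invariant and of degree zero, but it does not intertwine the differentials. The obstruction is concentrated: the map $d_{D_p\g}\circ s-s\circ d_{D\g}$ is a degree-one derivation along $s$ which vanishes on $\g$ and, being valued in the central line $\R c$, on all of $[D\g,D\g]$; on the generating function $i$ it is precisely the cocycle $q$. This is the same observation underlying Theorem~\ref{thm:central}, namely that $s$ restricts to a chain map on $\mathcal J=[D\g,D\g]+\g$.

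First I would fix the \emph{shape} of $\Phi_p^{-1}d\Phi_p$ using the projection $\pi_p\colon D_p\g\to D\g$ that kills $c$. Unlike $s$, this \emph{is} a morphism of DGLAs (put $c=0$ in \eqref{eq:univext}); it satisfies $\pi_p\circ s=\mathrm{id}_{D\g}$, so $\mathrm{id}\otimes\pi_p$ is a chain map of Hopf algebras over $W\g$ with $(\mathrm{id}\otimes\pi_p)\Phi_p=\Phi$. Applying $\mathrm{id}\otimes\pi_p$ to $\Phi_p^{-1}d\Phi_p$ and using \eqref{eq:phidphi} gives $(\mathrm{id}\otimes\pi_p)(\Phi_p^{-1}d\Phi_p)=\Phi^{-1}d\Phi$, so $\Phi_p^{-1}d\Phi_p$ differs from $(\mathrm{id}\otimes s)(\Phi^{-1}d\Phi)$ — which involves only $\iti(t)$ and $l(\theta)$, exactly as in the statement — by an element of $\ker(\mathrm{id}\otimes\pi_p)=W\g\otimes\R c$, say $-e\otimes c$. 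Moreover $e$ is automatically $\g$-invariant, because $\Phi_p$ and $d$ are $\g$-equivariant, and homogeneous of the degree dictated by $|c|=-(2n-2)$, i.e.\ $e\in(W\g)^\g$ in degree $2n-1$. It remains to identify $e$.

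For this I would use once more that $\Phi_p$ is group-like, so that $-\Phi_p^{-1}d\Phi_p$ is a Maurer--Cartan element of $W\g\otimes D_p\g$ (differentiate $\Phi_p^{-1}d\Phi_p$ and use $d^2=0$ together with $d\Phi_p^{-1}=-\Phi_p^{-1}(d\Phi_p)\Phi_p^{-1}$). Expanding the Maurer--Cartan equation with the differential of $D_p\g$, the terms not involving $c$ reassemble into the quadratic bracket exactly as in the proof of Proposition~\ref{prop:MC} (the central summand $e\otimes c$ drops out of every bracket), while the $q$-term of \eqref{eq:univext} contributes $p(t)\otimes c$ and the differential of the central summand contributes $(d_{W\g}e)\otimes c$. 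Matching the $c$-components then forces the single scalar relation $d_{W\g}e=p$; thus $e$ is a $\g$-invariant chain of transgression for $p$, which exists precisely because $p$ is $d_{W\g}$-closed and $W\g$ is acyclic in positive degrees, and is determined up to an exact invariant — the same ambiguity as in the choice of $\Phi$ in Theorem~\ref{prop:phi}. Feeding $d_{W\g}e=p$ back into the expression from the previous paragraph gives the claim.

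The hard part will be the bookkeeping in the two middle steps: one must verify that the defect of $s$ really is central-valued, so that the projection trick isolates the correction cleanly inside $W\g\otimes\R c$ with no $D\g$-valued remainder, and one must track the Koszul signs in the Maurer--Cartan expansion precisely enough to see that the non-central part cancels identically, leaving only the transgression relation $d_{W\g}e=p$. Everything else is a direct application of Theorems~\ref{prop:phi} and \ref{thm:central} together with the computation already performed in Proposition~\ref{prop:MC}.
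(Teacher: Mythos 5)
Your proposal is correct and follows essentially the same route as the paper: first pin down the shape of $\Phi_p^{-1}d\Phi_p$ by projecting along $D_p\g\to D\g$ (killing $c$) and invoking Theorem~\ref{prop:phi}, then exploit that the group-like property makes $\Phi_p^{-1}d\Phi_p$ a Maurer--Cartan element, so that the non-central terms cancel as in Proposition~\ref{prop:MC} and the $c$-component yields $d_{W\g}e=p$. Your extra bookkeeping (centrality of the defect of $s$, $\g$-invariance and degree of $e$, and the sign in the Maurer--Cartan equation for $\Phi_p^{-1}d\Phi_p$) only makes explicit what the paper leaves implicit.
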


\begin{proof}
Since $\Phi_p$ is a group-like element, $\Phi_p^{-1} d\Phi_p$ is an element of
$W\g \otimes D_p\g$, and its projection to $W\g \otimes D\g$  is equal to
$\iti(t)-l(\theta)$ (see Theorem \ref{prop:phi}). Hence,
$\Phi_p^{-1} d\Phi_p=\iti(t)-l(\theta)-e \otimes c$ for some $e\in W\g$.

Note that the expression $\Phi_p^{-1}d\Phi_p$ is automatically a Maurer-Cartan
element. This implies,
\begin{align*}
0 & =  d\Big(\Phi_p^{-1} d \Phi_p\Big) - 
\frac{1}{2} [\Phi_p^{-1} d \Phi_p, \Phi_p^{-1} d \Phi_p] \\
& = 
d(\tilde{\iota}(t) - l(\theta)-e\otimes c) - 
\frac{1}{2}[\tilde{\iota}(t) - l(\theta)-e\otimes c, \tilde{\iota}(t) - l(\theta)-e\otimes c] \\
& =  p  \otimes c - (de) \otimes c .
\end{align*}
We conclude that $de=p$, as required.
\end{proof}

\subsection{Deformations of a DGLA homomorphism}
Let $A$ be a DGLA and $e\in A^1$ be an element of degree one. Recall that the operator  $d'=d-[e,\cdot]$ defines a new differential on $A$ 
(that is, $d'$ is a derivation of the Lie bracket and ${d'}^2=0$) if and only if the element $z=de-[e,e]/2$ lies in the center of $A$. The Jacobi identity implies that $z$ is closed,
$dz=0$. Indeed,
$$
dz=d\bigl( de -\frac{1}{2} [e,e]\bigr) =\frac{1}{2}\left([e,de] - [de,e]\right)
=[e, z + \frac{1}{2} [e,e]]=\frac{1}{2}[e,[e,e]]=0.
$$
We shall denote the graded Lie algebra $A$ equipped with this new differential by $A_{(e)}$.

A DGLA homomorphism $D\g\to A$ consists of a Lie algebra homomorphism
$l_A:\g\to A$ and a $\g$-equivariant formal power series $i_A:\g[2]\to A$ of degree 1, satisfying $i_A(0)=0$ and Equation \eqref{eq:def}. 
Consider simultaneous deformations of the differential on $A$ and of the map $i_A$. 
Choose a $\g$-equivariant power series of total degree one,
$e:\g[2]\to A$, and set
\begin{align*}
  d'&=d-[e(0),\cdot]  \,  ,\\
  i'_A(t)&=i_A(t)+e(t)-e(0) \, .
\end{align*}
These formulas define a DGLA homomorphism  $D\g\to A_{(e(0))}$ if and only if
 \begin{equation}\label{eq:morph-nonc}
   d_\g e(t):= d e(t) - [i_A(t), e(t)]=\frac{1}{2}[e(t),e(t)]+z,
 \end{equation}
where  $z\in A$ lies in the center of $A$. Note that $z$ is invariant under the $\g$-action
defined by the Lie homomorphism $l_A: \g \to A$. Then, the Jacobi identity implies
that $z$ is equivariantly closed, $d_\g z=0$.

More generally, assume that
\begin{equation}\label{eq:morph-central}
  d_\g e(t)=[e(t),e(t)]/2+z(t),
\end{equation}
where $z(t)$ takes values in the center of $A$,
and $q(t)=z(t)-z(0)$ takes values in the subspace $C \subset A$. 
Then, we obtain a DGLA  homomorphism $D\g\oplus_q C\to A_{(e(0))}$.

A natural framework for constructing examples is as follows: let $B$ be a $C\g$-module 
with a chosen $C\g$-invariant element $1_B\in B^0_\text{closed}$. For example, $B$ may be a unital
(graded) commutative  $\g$-differential algebra. Consider the semi-direct sum 
$A=C\g \ltimes B[n]$, where $B[n]$ is viewed as an abelian DGLA. Solutions
of equation \eqref{eq:morph-central} are provided by the following construction:

\begin{thm}  \label{prop:esolution}
Let $c\in B^{2k}$ be a basic cocycle,
and $p\in (S^k\g^*)^\g$ be an invariant polynomial of degree $k$. Assume that
the cohomology class of the element $p\otimes 1_B + 1 \otimes c \in (S\g^*\otimes B)^\g$ vanishes in 
$H_\g(B)$, and let $e \in (S\g^* \otimes B)^\g$ be such that
$d_\g e= p\otimes 1_B + 1\otimes c$. Consider $A=C\g \ltimes B[2k-2]$.
Then $e$ is a solution of equation \eqref{eq:morph-central} with
$z(t)=p(t)1_B+ c, z(0)=c, q(t)=p(t) 1_B$ and $C=\R[2k-2] 1_B$.
\end{thm}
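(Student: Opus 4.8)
The plan is to verify the hypothesis of \eqref{eq:morph-central} for the given candidate $e$ by a direct but organized computation in $A = C\g \ltimes B[2k-2]$, exploiting that the only new ingredient over $C\g$ is the abelian ideal $B[2k-2]$, on which $C\g$ acts through $L$ and $I$. First I would unpack the structure of $A$: an element is a pair $(u, b)$ with $u \in C\g$ and $b \in B$ (shifted), the bracket is $[(u_1,b_1),(u_2,b_2)] = ([u_1,u_2], L_B(u_1^{(0)})b_2 - (-1)^{\cdots}L_B(u_2^{(0)})b_1 + I_B(\cdots)\cdots)$ where $u^{(0)}$ denotes the degree-zero (Lie-derivative) part and the contraction part of $u$ acts via $I_B$; crucially $[B,B]=0$. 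The element $e$ lives purely in the $B$-summand — it is $e \in (S\g^* \otimes B)^\g$ with no $C\g$-component — so $e(0) = c \in B^{2k} \subset B[2k-2]^1$, and $d' = d - [c, \cdot]$. Since $c$ is a \emph{basic} cocycle, $L_B(x)c = 0$ and $I_B(x)c = 0$, so $[c, (u,b)] = 0$ for all $(u,b)$ with $u \in \g\varepsilon \oplus \g$; hence $d' = d$ on the $C\g$-part, and the claim $C = \R[2k-2]1_B$ with $z(0) = c$ is immediate once we note $c$ is central in $A$ (again by basicness: $[c,\g\varepsilon] = [c,\g] = 0$, and $[c,B]=0$).

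Next I would compute $d_\g e(t) := d_A e(t) - [i_A(t), e(t)]$ where $i_A$ is the structure map of the tautological $D\g$-homomorphism into $A$ obtained from... wait — here there is no prior $D\g$-homomorphism; rather $A$ is built from $C\g$, and the relevant $i_A(t)$ is $I(t) = t^a I(e_a) \in C\g$-part of $A$, i.e. $l_A = L$, $i_A(t) = I(t)$ (the canonical $C\g$-structure, viewed through $\pi$). Then $d_A$ acts on $e \in S\g^* \otimes B$ as $d_B$ together with the $S\g^*$ being $d$-closed, and $[i_A(t), e(t)] = [I(t), e(t)]$ which in $A = C\g \ltimes B[2k-2]$ equals $-I_B(t)e(t)$ (the adjoint action of the contraction part of $C\g$ on $B$, with sign from the shift). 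Therefore $d_\g e(t) = (d_B - I_B(t))e(t)$, which is exactly the Cartan differential on $(S\g^* \otimes B)^\g$ computing $H_\g(B)$ — consistent with the hypothesis that $e$ is a $d_\g$-primitive of $p \otimes 1_B + 1 \otimes c$. So $d_\g e(t) = p(t) 1_B + c$. Finally, since $e$ takes values in $B[2k-2]$ and $[B,B]=0$, the term $[e(t),e(t)]/2$ vanishes, so \eqref{eq:morph-central} reduces to $p(t)1_B + c = z(t)$, forcing $z(t) = p(t)1_B + c$, and both $p(t)1_B$ and $c$ are central in $A$ (basicness of $c$ handles $c$; $p(t)1_B$ is central since $1_B$ is $C\g$-invariant, in particular $I$-invariant and $L$-invariant, and lies in the abelian ideal). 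This gives $z(0) = c$, $q(t) = z(t) - z(0) = p(t)1_B$, taking values in $C = \R[2k-2]1_B \subset A$, exactly as claimed.

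The main obstacle I anticipate is not any single deep step but rather bookkeeping the signs and degree shifts correctly: one must be careful that $c \in B^{2k}$ sits in degree $1$ of $B[2k-2]$ so that $e(0) = c$ is a legitimate choice for the "degree one" element deforming the differential, that $e(t)$ (with $t$ of degree $2$) is a total-degree-one power series $\g[2] \to A$, and that the adjoint action $[I(t), \cdot]$ on $B[2k-2]$ reproduces $-I_B(t)$ with the sign convention under which $d_\g = d_B - I_B(t)$ is the standard Cartan differential. Once these conventions are pinned down, every verification is forced. I would also explicitly remark that "$c$ basic" is used twice — once to make $d' = d$ (no correction to the $C\g$-part of the differential) and once to ensure $c$ and hence $z(0)$ is genuinely central — and that "$1_B$ is $C\g$-invariant and closed" is what makes $p(t)1_B$ central and lets the formula descend, so that the resulting $D\g \oplus_q C \to A_{(e(0))} = A_{(c)}$ is well defined by the general principle of equation \eqref{eq:morph-central}.
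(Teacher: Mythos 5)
Your core verification is correct and is essentially the paper's argument: because $e$ takes values in the abelian ideal $B[2k-2]$, the quadratic term $[e(t),e(t)]/2$ vanishes, so \eqref{eq:morph-central} reduces to the hypothesis $d_\g e = p\otimes 1_B + 1\otimes c$, giving $z(t)=p(t)1_B+c$; centrality of $z(t)$ follows from $c$ being basic and $1_B$ being $C\g$-invariant, and $q(t)=p(t)1_B$ lands in $\R[2k-2]1_B$. Your explicit check that the derived differential $de(t)-[i_A(t),e(t)]$ of \eqref{eq:morph-nonc} coincides with the Cartan differential $d_B-I_B(t)$ on $(S\g^*\otimes B)^\g$ is a worthwhile addition that the paper leaves implicit.

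However, there is a genuine error in your first paragraph: you assert $e(0)=c$ and conclude $d'=d$. This is false. Setting $t=0$ in $d_\g e = p\otimes 1_B + 1\otimes c$ gives $d_B e(0)=c$ (since $p(0)=0$ for $k\geq 1$), so $e(0)\in B^{2k-1}=B[2k-2]^1$ is a \emph{primitive} of $c$, not $c$ itself; indeed $c\in B^{2k}=B[2k-2]^2$, so it has the wrong degree to be the constant term of the degree-one series $e$. You have conflated $e(0)$ with $z(0)$. Moreover $e(0)$ is $\g$-invariant but not basic in general, so $[e(0),\cdot]$ does not vanish and $d'=d-[e(0),\cdot]$ is a genuine deformation of the differential --- this is precisely the source of the nontrivial term $d'I(x)=L(x)-I_{W\g}(x)e(0)$ and hence of the FMS cocycle in Section \ref{ex:fms}; your reading would trivialize that construction. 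Since the theorem as stated only asserts the identity \eqref{eq:morph-central} with the given $z$, $q$, $C$, this slip does not invalidate your verification of the stated claim, but it must be corrected before the theorem is used to produce the homomorphism $D_p\g\to A_{(e(0))}$.
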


\begin{proof}
Since $B[2k-2] \subset A$ is an abelian DGLA, equation 
$d_\g e=p\otimes 1 + 1 \otimes c$ implies equation \eqref{eq:morph-central}
with $z(t)=p(t) 1_B +  c$. Putting $t=0$ yields $z(0)=c$.
This element is central in $A$ since $c \in B^{2k}$ is basic. 
Finally, $q(t)=z(t)-z(0)=p(t)1_B$.
\end{proof}

\begin{rem}
There is a natural DGLA homomorphism $D\g \to A$ induced by the canonical
projection $D\g \to C\g$. Elements $e$ described in Theorem \ref{prop:esolution}
define DGLA homomorphisms $D_p\g \to A$, where the central line $R[2k-2] \subset D_p\g$
maps to $\R[2k-2] 1_B \subset B[2k-2] \subset A$.

In the case of $p=0$, we obtain a DGLA homomorphism $D\g \to A$. Then, for the
construction to work one only needs a structure of a $\g$-differential vector space on $B$.
\end{rem}

\subsection{Examples}
In this Section, we consider two examples of the construction described above.

\subsubsection{Extensions of $C\g$ by differential forms}
Let $\g$ act on a manifold $M$. We choose $B=\Omega(M)$ with the natural structure
of a $\g$-differential algebra.  Let $A=C\g \ltimes \Omega(M)[m]$. If 
$\phi(t) \in (S\g^* \otimes \Omega(M))^\g$ is a degree $m+1$ equivariant cocycle, 
then the construction of the previous Section defines a DGLA homomorphism
$D\g \to A_{(\phi(0))}$. If $m=2n-2$ and $\phi$ verifies equation 
$d_\g \phi = p\otimes 1$ for $p\in (S^n\g^*)^\g$, we obtain a DGLA
homomorphism $D_p\g \to A_{(\phi(0))}$.

In more detail, write $\phi(t)=\sum_{l=0}^{n-1} \phi_l(t)$, where $\phi_l(t)$ is a homogeneous polynomial of degree $l$ with values in  degree $2n-2l-1$ forms on $M$.
Then, $\mathcal{I}(t)$ maps to $I_M(t)+\phi_1(t)$, $\I(t^k)$ mp to $\phi_k(t)$ for $2 \leq k \leq n-1$, and $\I(t^k)$ map to zero for $k>n-1$. 
For $n=2$, the images of higher contractions (with $k\geq 2$) vanish and 
the DGLA homomorphism $D_p\g\to A_{(\phi(0))}$ descends to a homomorphism
$C_p\g\to A_{(\phi(0))}$. If $n> 2$, higher contractions map to non-vanishing differential forms $\phi_k(t)$, and the DGLA homomorphism does not
descend to $C_p\g$.

The DGLA $A_{(\phi(0))}$ acts on the differential graded algebra $\Omega(M)[[s]]$, where $s$ is a formal variable of degree $2n-2$ satisfying $ds=-\phi(0)$: the action of $C\g$ is the standard
action on $\Omega(M)$ (that is, the action is trivial on $s$) and the action of $\alpha\in\Omega(M)[m]$ is via $\alpha\,\partial_s$. A homomorphism $D\g\to A_{(\phi(0))}$ therefore gives rise to an action of $D\g$ on $\Omega(M)[s]$. This action can be seen as a twist by $\phi$ of the standard action of $C\g$ on $\Omega(M)$. One can get rid of the variable $s$ using the embedding
$\Omega(M)$ to $\Omega(M)[\![s]\!]$ by $\alpha\mapsto\alpha e^s$. The new differential on $\Omega(M)$ is then $d-\phi(0)$. ($\Omega(M)$ is  only mod-2 graded and the differential and the action of $D\g$ are no longer derivations).

\subsubsection{Extensions of $C\g$ by the Weil algebra}  \label{ex:fms}
One can choose $B$ equal to the Weil algebra $W\g$. 
Recall that for  $p\in (S^n\g^*)^\g$ one can choose an element $e \in (S\g^*\otimes W\g)^\g$
such that $d_\g e=p \otimes 1 - 1\otimes p$. 
Denote the generators of $S\g^*$ by $t^a$ and the generators of $W\g$ by $\theta^a$ and $f^a$.
The element $e(t)$ (here $t\in \g$ refers to the first factor in $(S\g^*\otimes W\g)^\g$) is a solution of \eqref{eq:morph-central} (notice that $[e(t),e(t)]=0$) for $c=-p\in W\g$ and $q(t)=p(t)$. We thus get a homomorphism $D_p\g\to A_{(e(0))}$. Notice that $e(0)\in(W\g)^\g$ satisfies  $de(0)=-p$.

For $n=2$, one can choose the element $e$ in the form
$$
e(t)=-\bigl(p(t+f, \theta) - \frac{1}{6} p(\theta, [\theta, \theta]) \bigr).
$$
Since $e(t)-e(0)=-p(t,\theta)$, $i(t)$ maps to  $I(t) - p(t, \theta)$. Note that 
$$
[I(x) - p(x, \theta), I(y) - p(y, \theta)] = - 2 p(x,y),
$$
as required by the relations of $D_p\g$. 
In the case of $n=2$, higher contractions vanish and one actually obtains a 
DGLA homomorphism $C_p\g \to A_{(e(0))}$. 

For $n\geq 3$, images of some higher contractions are necessarily non-vanishing.
For $n=3$, we can work it out in more detail. Recall that
$(D_p\g)^0\cong \g$ with generators $l(x)$, $(D_p\g)^{-1}\cong \g$ with generators
$\I(x)$, $(D_p\g)^{-2}\cong  S^2\g$ spanned by $[\I(x), \I(y)]$,
$(D_p\g)^{-3}\cong S^2\g \oplus {\rm ker}(\g\otimes S^2\g \to S^3\g)$
spanned by $\I(xy)$ and $[\I(x), [\I(y), \I(z)]]$, and 
$(D_p\g)^{-4}=(D\g)^{-4} \oplus \R$, where $\R$ is the central line. 
For the algebra $A$, we have
$A^{-4}=\R$ spanned by the unit of the Weil algebra,
$A^{-3}\cong \g^*$ with generators $\theta^a$, 
$A^{-2}\cong \g^* \oplus \wedge^2 \g^*$ spanned by 
$f^a$ and $\theta^a\theta^b$, $A^{-1}\cong \g \oplus (W\g)^{3}$
with $\g$ spanned by $I(x)$, and $A^0\cong \g \oplus (W\g)^4$,
where $\g$ is spanned by $L(x)$.

The image of the homomorphism $\rho: D_p\g \to A$ is a DGLA $B$ with
nontrivial graded components $B^0\cong \g$ with generators $L(x)$, 
$B^{-1}\cong \g$ with generators $\tilde{I}(x)$, $B^{-2}=\g^*$ with generators
$\mu(\xi)$, $B^{-3}=\g^*$ with generators $\theta(\xi)$ (here $\xi\in \g^*$), and $B^{-4}=\R c$. The differential acts as $d\tilde{I}(x)=L(x), d\theta(\xi)=\mu(\xi)$.
For the Lie bracket, $L(x)$ act on other components by the adjoint and coadjoint
actions, $B^{-2}, B^{-3}, B^{-4}$ form an abelian Lie subalgebra, 
$[\tilde{I}(x), \theta(\xi)]=\theta(\ad^*(x)\xi)$, $[\tilde{I}(x), \theta(\xi)]=\xi(x) c$, and
$$
[\tilde{I}(x), \tilde{I}(y)]=2\mu(p(x,y, \cdot)).
$$
Here $\rho(\I(xy))=\theta(p(x,y,\cdot))$, and the last relation follows from
$[\I(x), \I(y)]=2 d \I(xy)$.

\section{Current algebras}

In this Section, we introduce a functor associating to a manifold $M$ and to a DGLA $A$
a Lie algebra $\CA(M,A)$. As an application, we give a new interpretation of the 
Faddeev-Mickelsson-Shatashvili (FMS) cocycles of higher dimensional current algebras.

\subsection{Current algebra functor}

Let $A$ be a DGLA. Then, the subspace of closed elements of $A$ of degree zero $A^{0}_\text{closed}\subset A^0$ is
a Lie subalgebra of $A$. Following \cite{yks}, notice that $A^{-1}$ equipped with the bracket
\begin{equation}\label{eq:leibniz}
  \{\alpha,\beta\}:=[\alpha,d\beta]
\end{equation}
is a Leibniz algebra. That is, the bracket $\{,\}$ satisfies the Jacobi identity
$$\{\alpha,\{\beta,\gamma\}\}=\{\{\alpha,\beta\},\gamma\}+\{\beta,\{\alpha,\gamma\}\}.$$
 As the symmetric part of this bracket has exact values, the quotient space
 $A^{-1}/A^{-1}_\text{exact}$ is a Lie algebra. 

\begin{prop}
There is an exact sequence of Lie algebras,
\begin{equation}  \label{exact1}
 0 \to H^{-1}(A) \to A^{-1}/A^{-1}_{\rm exact} 
\to A^{0}_{\rm closed} \to H^0(A) \to 0,
\end{equation}
where $H^{-1}(A)$ is abelian, the Lie bracket on $H^0(A)$ is induced by the Lie bracket
on $A^0$, and 
the map $A^{-1}/A^{-1}_{\rm exact} \to A^{0}_{\rm closed}$ is induced by the differential of $A$.
\end{prop}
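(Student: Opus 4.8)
The plan is to construct the four maps in \eqref{exact1} explicitly and verify exactness at each of the four spots, treating the Lie-algebra structure as a parallel bookkeeping task. First I would set up the maps. The inclusion $H^{-1}(A) \to A^{-1}/A^{-1}_{\mathrm{exact}}$ sends a class $[\alpha]$ with $d\alpha = 0$ to $\alpha + A^{-1}_{\mathrm{exact}}$; this is well defined because $H^{-1}(A) = A^{-1}_{\mathrm{closed}}/A^{-1}_{\mathrm{exact}}$ literally sits inside the quotient $A^{-1}/A^{-1}_{\mathrm{exact}}$. The second map $\delta : A^{-1}/A^{-1}_{\mathrm{exact}} \to A^0_{\mathrm{closed}}$ is induced by $d$; it is well defined since $d$ kills $A^{-1}_{\mathrm{exact}}$, and its image lands in closed elements because $d^2 = 0$. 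The third map is the obvious projection $A^0_{\mathrm{closed}} \to H^0(A) = A^0_{\mathrm{closed}}/A^0_{\mathrm{exact}}$.

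Next I would check exactness pointwise. Exactness at $H^{-1}(A)$: the inclusion is injective essentially by construction, since if $\alpha$ closed lies in $A^{-1}_{\mathrm{exact}}$ then $[\alpha] = 0$ already in $H^{-1}(A)$. Exactness at $A^{-1}/A^{-1}_{\mathrm{exact}}$: a class maps to $0$ under $\delta$ iff it has a closed representative, i.e.\ iff it comes from $H^{-1}(A)$; one must note that if $d\alpha = 0$ for \emph{some} representative then that representative already shows the class is in the image. Exactness at $A^0_{\mathrm{closed}}$: the image of $\delta$ is $d A^{-1} = A^0_{\mathrm{exact}}$, which is precisely the kernel of the projection to $H^0(A)$. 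Exactness at $H^0(A)$: the projection $A^0_{\mathrm{closed}} \to H^0(A)$ is surjective by definition of $H^0$.

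For the Lie-algebraic content I would verify that each map is a Lie homomorphism with respect to the brackets named in the statement. On $A^{-1}/A^{-1}_{\mathrm{exact}}$ the bracket is the antisymmetrization of the derived bracket \eqref{eq:leibniz}; the key computation is $\delta\{\alpha,\beta\} = d[\alpha,d\beta] = [d\alpha, d\beta] = [\delta\alpha,\delta\beta]$, using that $d$ is a derivation of the Lie bracket, so $\delta$ is a homomorphism onto $A^0_{\mathrm{closed}}$ with its commutator bracket. On $H^{-1}(A)$ the induced bracket is $\{\alpha,\beta\} = [\alpha,d\beta] = [\alpha,0] = 0$ when $\beta$ is closed, so $H^{-1}(A)$ is abelian and its inclusion is a homomorphism (one should check the inclusion lands in the center, which follows since $\{\gamma,\alpha\} = [\gamma, d\alpha] = 0$ whenever $\alpha$ is closed). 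That $A^0_{\mathrm{closed}} \to H^0(A)$ is a Lie homomorphism is immediate since $A^0_{\mathrm{exact}}$ is an ideal in $A^0_{\mathrm{closed}}$.

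The only genuinely delicate point — the main obstacle — is checking that the antisymmetrized derived bracket on $A^{-1}/A^{-1}_{\mathrm{exact}}$ is well defined and satisfies the Jacobi identity, i.e.\ that passing to the quotient by exact elements turns the Leibniz algebra $(A^{-1},\{,\})$ into an honest Lie algebra; this is the content of the remark after \eqref{eq:leibniz} (the symmetric part $\{\alpha,\beta\}+\{\beta,\alpha\} = [\alpha,d\beta]+[\beta,d\alpha] = d[\alpha,\beta]$ is exact, and exact elements are absorbed by the bracket since $\{d\gamma,\beta\} = [d\gamma,d\beta] = d[\gamma,d\beta]$ is exact and $\{\alpha,d\gamma\} = [\alpha,d^2\gamma] = 0$), and I would simply cite that discussion rather than re-derive it. Everything else is routine diagram-chasing once the maps are in place.
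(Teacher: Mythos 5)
Your proposal is correct and follows essentially the same route as the paper: exactness is a direct consequence of the definitions of $H^{-1}(A)$ and $H^0(A)$, the inclusion of $H^{-1}(A)$ is a Lie homomorphism because the derived bracket $\{\alpha,\beta\}=[\alpha,d\beta]$ vanishes when $\beta$ is closed, and the map to $A^0_{\rm closed}$ is a Lie homomorphism via $d\{\alpha,\beta\}=[d\alpha,d\beta]$. The only difference is that you spell out the routine diagram-chase and the well-definedness of the quotient bracket (which the paper delegates to the discussion following \eqref{eq:leibniz}) in more detail; the mathematical content is identical, modulo an inconsequential sign in your formula for the symmetric part of the derived bracket.
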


\begin{proof}
By definition of cohomology groups $H^{-1}(A)$ and $H^0(A)$, the sequence
\eqref{exact1} is exact.
The map $H^{-1}(A) \to A^{-1}/A^{-1}_\text{exact}$ is a Lie homomorphism 
since the derived bracket \eqref{eq:leibniz} vanishes if $\alpha$ and $\beta$
are closed. The map $A^{-1}/A^{-1}_\text{exact} \to A^{0}_\text{closed}$
is a Lie homomorphism because $d\{\alpha,\beta\}=d[\alpha,d\beta]=[d\alpha,d\beta]$.
Finally, the map $A^{0}_\text{closed} \to H^0(A)$ is a Lie homomorphism by definition 
of the Lie bracket on $H^0(A)$.
\end{proof}

\begin{prop} \label{prop:exact}
Let $0\to A\to B\to C\to 0$ be an exact sequence of DGLAs. Then, there is an
exact sequence of Lie algebras,
\begin{multline}  \label{exact2}
\dots   \to  H^{-2}(A)  \to   H^{-2}(B)  \to 
 H^{-2}(C)\to  \\
          \to  A^{-1}/A^{-1}_{\rm exact}  \to  B^{-1}/B^{-1}_{\rm  exact}
 \to  C^{-1}/C^{-1}_{\rm  exact}  \to  0,
\end{multline}
where all cohomology groups are viewed as abelian Lie algebras, 
the map $H^{-2}(C) \to A^{-1}/A^{-1}_{\rm exact}$ is the composition
of the connecting homomorphism $H^{-2}(C) \to
H^{-1}(A)$ and the natural map
$H^{-1}(A) \to A^{-1}/A^{-1}_{\rm exact}$.
\end{prop}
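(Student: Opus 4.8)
The plan is to combine the long exact sequence in cohomology with the
elementary exact sequence \eqref{exact1} already established for a single DGLA.
First I would record the relevant pieces of data: for each of $A$, $B$, $C$
we have the exact sequence of Lie algebras
\eqref{exact1}, and we have the usual long exact cohomology sequence
associated to $0\to A\to B\to C\to 0$, since the underlying short exact
sequence of complexes gives connecting homomorphisms
$\delta: H^{n}(C)\to H^{n+1}(A)$. The map
$H^{-2}(C)\to A^{-1}/A^{-1}_{\mathrm{exact}}$ is, by definition, the composite
of $\delta: H^{-2}(C)\to H^{-1}(A)$ with the inclusion
$H^{-1}(A)\to A^{-1}/A^{-1}_{\mathrm{exact}}$ coming from \eqref{exact1};
the other maps in \eqref{exact2} are induced by the DGLA maps
$A\to B\to C$ (these are well defined on the quotients because the DGLA maps
commute with $d$, hence send exact elements to exact elements).

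The key step is to verify exactness at the three ``new'' spots
$A^{-1}/A^{-1}_{\mathrm{exact}}$, $B^{-1}/B^{-1}_{\mathrm{exact}}$,
$C^{-1}/C^{-1}_{\mathrm{exact}}$ (exactness further to the left is just the
ordinary cohomology long exact sequence). My strategy is a diagram chase
using the three copies of \eqref{exact1} stacked vertically, mapping to one
another via $A\to B\to C$; this is a $3\times(\text{infinite})$ commutative
ladder. At $C^{-1}/C^{-1}_{\mathrm{exact}}$ I would show surjectivity of
$B^{-1}/B^{-1}_{\mathrm{exact}}\to C^{-1}/C^{-1}_{\mathrm{exact}}$: given
$\gamma\in C^{-1}$, lift it to $\beta\in B^{-1}$ (possible since $B^{-1}\to
C^{-1}$ is onto); $\beta$ need not be closed, but $d\beta$ maps to
$d\gamma\in C^0_{\mathrm{closed}}$, and the image of $\beta$ differs from
$\gamma$ by an exact element once we arrange matters correctly — in fact
surjectivity of $B^{-1}\to C^{-1}$ already does it. At
$B^{-1}/B^{-1}_{\mathrm{exact}}$: if $\beta\in B^{-1}$ maps to something
exact in $C^{-1}$, say to $d\gamma$, lift $\gamma$ to
$\tilde\beta\in B^{-1}$; then $\beta-d\tilde\beta$ maps to $0$ in $C^{-1}$,
hence comes from $A^{-1}$, and its class is a preimage. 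At
$A^{-1}/A^{-1}_{\mathrm{exact}}$: an element killed in
$B^{-1}/B^{-1}_{\mathrm{exact}}$ comes (after subtracting an exact element)
from an $\alpha\in A^{-1}$ with $\alpha = d\beta$ in $B$ for some
$\beta\in B^{-1}$; then $\beta$ maps to a closed, hence (by shifting)
to an element of $C^{-1}$ whose class lives in $H^{-1}(C)$... more precisely
$\beta \bmod B^{-1}_{\mathrm{exact}}$ maps to a class coming from
$H^{-2}(C)$ via $\delta$, and one checks the class of $\alpha$ equals
$\delta$ of that class. This last identification is where one must be
careful to match the two descriptions of the connecting map.

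The main obstacle I anticipate is precisely this bookkeeping at
$A^{-1}/A^{-1}_{\mathrm{exact}}$: one must show that the kernel of
$A^{-1}/A^{-1}_{\mathrm{exact}}\to B^{-1}/B^{-1}_{\mathrm{exact}}$ is exactly
the image of $H^{-2}(C)$ under $(\text{inclusion})\circ\delta$, and that
amounts to unwinding the definition of the snake-lemma connecting
homomorphism and reconciling it with the map ``lift to $B$, take $d$, push
to $A$''. This is a standard but slightly delicate chase; everything else
(well-definedness on quotients, the Lie-algebra structure being respected —
which follows since all maps in sight are restrictions/quotients of DGLA
morphisms and since the cohomology groups are abelian, as already noted in
\eqref{exact1}) is routine. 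I would also remark that exactness in the
cohomological range $\dots\to H^{-2}(A)\to H^{-2}(B)\to H^{-2}(C)\to$ is
inherited verbatim from the long exact cohomology sequence of the short
exact sequence of complexes, with the only new content being the splicing at
$H^{-1}(A)$: the composite $H^{-1}(A)\to A^{-1}/A^{-1}_{\mathrm{exact}}\to
B^{-1}/B^{-1}_{\mathrm{exact}}$ agrees with $H^{-1}(A)\to H^{-1}(B)\to
B^{-1}/B^{-1}_{\mathrm{exact}}$, so the image of $\delta$ from $H^{-2}(C)$
maps to zero, and conversely exactness of the cohomology sequence at
$H^{-1}(A)$ controls the kernel.
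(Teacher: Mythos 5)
Your argument is correct in substance, but it takes a more laborious route than the paper. The paper's proof is a one-line trick: replace $A$, $B$, $C$ by their truncations in which all components of non-negative degree are set to zero. For the truncated complex $\tilde A$ one has $H^{-1}(\tilde A)=A^{-1}/A^{-1}_{\rm exact}$ (every degree $-1$ element is closed, since the differential now lands in $0$), $H^{-k}(\tilde A)=H^{-k}(A)$ for $k\ge 2$, and $H^0(\tilde A)=0$; the truncated short exact sequence of complexes then has \eqref{exact2} as its ordinary long exact cohomology sequence, terminating in $0$ automatically. This eliminates your entire diagram chase at the three ``new'' spots --- exactness there, and the identification of $H^{-2}(C)\to A^{-1}/A^{-1}_{\rm exact}$ with $(\text{inclusion})\circ\delta$, all come for free from the snake lemma applied once to the truncations. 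What remains in either approach is the same: checking that the maps are Lie homomorphisms, which you handle exactly as the paper does (cohomology groups are abelian, the maps between derived-bracket quotients come from DGLA morphisms, and the connecting map factors through the Lie homomorphism $H^{-1}(A)\to A^{-1}/A^{-1}_{\rm exact}$ of \eqref{exact1}). Your direct chase does go through, but if you write it up you should fix the degree bookkeeping: in the chase at $B^{-1}/B^{-1}_{\rm exact}$ the lift $\tilde\beta$ of $\gamma\in C^{-2}$ lives in $B^{-2}$, not $B^{-1}$; in the chase at $A^{-1}/A^{-1}_{\rm exact}$ the element $\beta$ with $\alpha=d\beta$ lives in $B^{-2}$, its image in $C^{-2}$ is closed and defines a class in $H^{-2}(C)$ (not $H^{-1}(C)$), and one should note that $d\alpha=0$ follows from $d^2=0$ and injectivity of $A\to B$, so that $\delta$ of that class is indeed $[\alpha]\in H^{-1}(A)$. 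Finally, exactness at $H^{-2}(C)$ itself (which you pass over quickly) uses that $H^{-1}(A)\to A^{-1}/A^{-1}_{\rm exact}$ is injective, so the kernel of the spliced map coincides with the kernel of $\delta$.
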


\begin{proof}
Replace complexes $A, B$ and
$C$ by their truncations where all components of non-negative
degrees are replaced by zero. Then, \eqref{exact2} is the corresponding long exact 
sequence. Maps between cohomology groups are Lie homomorphisms 
since the corresponding Lie brackets vanish. Maps  between Lie algebras 
equipped with derived brackets are Lie homomorphisms since 
$A\to B \to C$ are homomorphisms of DGLAs.
Finally, the map $H^{-2}(C) \to A^{-1}/A^{-1}_{\rm exact}$
is a Lie homomorphism because it factors through the Lie homomorphism
$H^{-1}(A) \to A^{-1}/A^{-1}_{\rm exact}$.
\end{proof}

Note that if $C$ is acyclic, the long exact sequence \eqref{exact2} degenerates 
to a short exact sequence,
$$
0 \to A^{-1}/A^{-1}_\text{exact}  \to  B^{-1}/B^{-1}_\text{exact}
 \to  C^{-1}/C^{-1}_\text{exact}  \to  0.
$$

\begin{prop}
Let $0\to A \to B\to C\to 0$ be an exact sequence of DGLAs. Then, there is
an exact sequence of vector spaces,
$$
0  \to  A^0_{\rm closed} \to B^0_{\rm closed} \to C^{0}_{\rm closed}
   \to  H^1(A) \to H^1(B) \to H^1(C) \to \dots
$$
If $H^0(C)$ or $H^1(A)$ vanishes, it gives rise to a short exact sequence of
Lie algebras,
$$
0\to A^0_{\rm closed} \to B^0_{\rm closed} \to C^{0}_{\rm closed} \to 0.
$$
\end{prop}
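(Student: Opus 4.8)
The plan is to read off the long exact sequence in cohomology for the short exact sequence of complexes $0 \to A \to B \to C \to 0$, starting one step above the previous proposition, and then to identify its first few terms. Since taking degree-zero closed elements is the kernel of $d\colon A^0 \to A^1$, we have $A^0_{\rm closed} = Z^0(A) = \ker(d^0_A)$, and similarly for $B$ and $C$. The standard snake/zig-zag argument applied to the truncation where all components of degree $\le -1$ are discarded (or, equivalently, just the tail of the usual long exact sequence of a short exact sequence of complexes, continued upward from $H^0$) yields exactness of
\begin{equation*}
0 \to A^0_{\rm closed} \to B^0_{\rm closed} \to C^0_{\rm closed} \to H^1(A) \to H^1(B) \to H^1(C) \to \cdots,
\end{equation*}
where the connecting map $C^0_{\rm closed} \to H^1(A)$ sends a closed $c \in C^0$, lifted to $b \in B^0$, to the class of $d_B b \in A^1$ (which is closed and lands in $A$ because $c$ was closed). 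So the first step is simply to invoke this; the only thing worth spelling out is that injectivity of $A^0_{\rm closed} \to B^0_{\rm closed}$ and the identification of the kernel of $B^0_{\rm closed} \to C^0_{\rm closed}$ follow because $A \to B$ is injective and $A = \ker(B \to C)$, intersected with closed elements.

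The second step is to check that when either $H^0(C)$ or $H^1(A)$ vanishes, the connecting map $C^0_{\rm closed} \to H^1(A)$ is zero, so that the sequence truncates to $0 \to A^0_{\rm closed} \to B^0_{\rm closed} \to C^0_{\rm closed} \to 0$. If $H^1(A) = 0$ this is immediate. If $H^0(C) = 0$, then every closed $c \in C^0$ is exact, say $c = d_C c'$ with $c' \in C^{-1}$; lifting $c'$ to $b' \in B^{-1}$ gives $b = d_B b'$ as a lift of $c$, and then $d_B b = d_B d_B b' = 0$, so the class of $d_B b$ in $H^1(A)$ is zero. Hence in both cases $B^0_{\rm closed} \to C^0_{\rm closed}$ is surjective.

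The last step is to observe that the three remaining maps are Lie algebra homomorphisms, so the short exact sequence is one of Lie algebras. The bracket on each $X^0_{\rm closed}$ is the restriction of the bracket on $X^0$, and $A \to B \to C$ are DGLA homomorphisms, hence in particular Lie algebra homomorphisms preserving the differential and therefore preserving closed degree-zero elements; restricting them gives the required Lie homomorphisms, and the image of a Lie homomorphism is a Lie subalgebra (here $A^0_{\rm closed} \hookrightarrow B^0_{\rm closed}$ is an ideal because $A$ is an ideal in $B$, as in Proposition \ref{prop:exact}). I do not expect any serious obstacle: the content is entirely the standard homological algebra of the zig-zag lemma together with the trivial remark that all maps in sight respect brackets. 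The only mildly delicate point is the case analysis showing the connecting homomorphism vanishes, and even there the argument via exactness of $c$ is routine.
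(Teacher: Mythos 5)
Your proof is correct and takes essentially the same route as the paper: truncate the complexes by discarding the negative degrees, take the resulting long exact sequence (whose $H^0$ terms are the degree-zero closed elements), and kill the connecting map $C^0_{\rm closed}\to H^1(A)$ under either hypothesis --- your case analysis is just a hands-on version of the paper's observation that this map factors through $H^0(C)$.
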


\begin{proof}
Replace the complexes $A,B$ and $C$ by their truncations where all components 
with negative degrees are replaced by zero. The corresponding long exact sequence
is the one displayed in the Proposition. The connecting homomorphism
$C^0_{\rm closed}$ is a composition of the natural projection $C^0_{\rm closed} \to H^0(A)$
and the standard connecting homomorphism $H^0(A) \to H^1(C)$. 
If either $H^0(A)$ or $H^1(C)$ (or both) vanishes, this map vanishes as well
giving rise to a short exact sequence. This is a short exact sequence of Lie algebras
since $0\to A\to B\to C\to 0$ is an exact sequence of DGLAs.
\end{proof}

For a manifold $M$ and a DGLA $A$, we consider the DGLA 
$(\Omega(M) \otimes A)$, where the Lie bracket is induced by the Lie bracket of $A$,
and the differential comes from the differential of $A$ and the de Rham differential on $M$.
We define the current algebra functor as
$$
\CA(M, A)= (\Omega(M) \otimes A)^{-1}/(\Omega(M) \otimes A)^{-1}_\text{exact} .
$$
It associates a Lie algebra (a current algebra) to a pair of a manifold and a DGLA.
It is convenient to introduce a special notation
$$
\SA(M, A)=(\Omega(M) \otimes A)^0_\text{closed}.
$$
As before, we have a natural exact sequence of Lie algebras
$$
0 \to H^{-1}(\Omega(M) \otimes A) \to \CA(M, A) \to
\SA(M,A) \to H^0(M, A) \to 0.
$$
Note that if $A$ is acyclic, the exact sequence degenerates to an isomorphism
$\CA(M, A) \cong \SA(M, A)$. 

\begin{prop}
Let $A$ be a DGLA, and suppose that, as a complex, it is isomorphic to a cone
over a graded vector space $V$. That is, $A\cong CV=V[\varepsilon]$, where
$\varepsilon^2=0$, $\deg\varepsilon=-1$ and $d=d/d\varepsilon$. Then, the current algebra 
$\CA(M, A)$ is isomorphic to  $(\Omega(M) \otimes V)^0$ as a vector space. 
\end{prop}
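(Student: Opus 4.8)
The plan is to unwind the definition directly. Write $A\cong CV=V[\varepsilon]$ with $\varepsilon$ of degree $-1$, $\varepsilon^2=0$ and $d\varepsilon=1$ (so $d$ is $d/d\varepsilon$), and keep the internal differential on $V$ in mind (in the cone $CV$ we really have $d(v+w\varepsilon)=dv+w-(dw)\varepsilon$, but the relevant point here is only the $\varepsilon$-direction). Then $(\Omega(M)\otimes A)^{-1}$ decomposes, using $A^{-1}=V^{-1}\oplus V^0\varepsilon$ and more generally $A^n=V^n\oplus V^{n+1}\varepsilon$, into $\bigoplus_{k\ge 0}\bigl(\Omega^k(M)\otimes V^{-1-k}\bigr)\ \oplus\ \bigoplus_{k\ge 0}\bigl(\Omega^k(M)\otimes V^{-k}\varepsilon\bigr)$. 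The first summand is the ``non-$\varepsilon$'' part and the second is $(\Omega(M)\otimes V)^0\,\varepsilon$. So as a vector space $(\Omega(M)\otimes A)^{-1}=(\Omega(M)\otimes V)^0\varepsilon\ \oplus\ (\text{non-}\varepsilon\text{ part})$, and I want to show the quotient by exact elements is canonically identified with the first factor.

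First I would analyze the exact elements. The total differential on $\Omega(M)\otimes A$ is $d_{\mathrm{dR}}\pm d_A$; applied to a homogeneous element $\alpha\otimes(v+w\varepsilon)\in(\Omega(M)\otimes A)^{-2}$ it produces a term $\alpha\otimes w$ coming from $d\varepsilon=1$, plus terms of ``non-$\varepsilon$'' type and terms still proportional to $\varepsilon$. The key observation is that every element of $(\Omega(M)\otimes V)^0$ — i.e.\ every non-$\varepsilon$ contribution of total degree $-1$ — arises as such a $d$-image: given $\xi\in(\Omega(M)\otimes V)^{-1}$ sitting in the ``non-$\varepsilon$'' part, $\xi\varepsilon$ has total degree $-2$ and $d(\xi\varepsilon)=\pm\xi+(\text{terms with }\varepsilon)$. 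Running this degree by degree (a short filtration/triangularity argument on the de Rham degree, or equivalently on the $\varepsilon$-grading) shows that modulo exact elements one can kill the entire non-$\varepsilon$ part of any degree $-1$ element: the map sending $\eta+\sigma\varepsilon$ (with $\eta$ non-$\varepsilon$, $\sigma\in(\Omega(M)\otimes V)^0$) to its class $[\eta+\sigma\varepsilon]$ factors through $[\sigma\varepsilon]$.

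Conversely I must check that no nonzero purely-$\varepsilon$ element $\sigma\varepsilon$ with $\sigma\in(\Omega(M)\otimes V)^0$ is exact, so that the induced map $(\Omega(M)\otimes V)^0\to\CA(M,A)$ is injective. If $\sigma\varepsilon=d\beta$ for some $\beta\in(\Omega(M)\otimes A)^{-2}$, write $\beta=\beta_0+\beta_1\varepsilon$; then the $\varepsilon$-component of $d\beta$ is $\pm d_{\mathrm{dR}}\beta_1\mp(d_V\beta_1)\varepsilon$-type — more precisely the coefficient of $\varepsilon$ in $d\beta$ only involves $\beta_1$ and is itself a coboundary-type expression in $\beta_1$ alone, while the coefficient of $\varepsilon^0$ forces $\beta_0$ in terms of $\beta_1$. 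Tracking the $\varepsilon^0$-equation shows $\beta_0$ is determined and the $\varepsilon^1$-equation then says $\sigma$ is a coboundary for the differential on $(\Omega(M)\otimes V)$, but $\sigma\in(\Omega(M)\otimes V)^0$ and we are only asserting an identification of \emph{vector spaces} with $(\Omega(M)\otimes V)^0$, not with its cohomology — so here I need to be careful: the cleanest route is to choose the splitting $A=CV$ so that the internal differential on $V$ is built into the cone presentation, i.e.\ treat $A$ literally as the mapping cone of $\mathrm{id}_V$ up to the $V$-differential, and observe that in that presentation $d\beta$ has $\varepsilon^0$-part $\pm\beta_1$ plus non-$\varepsilon$ stuff, so $\sigma\varepsilon$ exact forces (from the $\varepsilon^0$-part) $\beta_1$'s non-$\varepsilon$ contribution to vanish appropriately and then the $\varepsilon^1$-part gives $\sigma=0$. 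Assembling the surjectivity from the second paragraph with this injectivity yields the claimed vector-space isomorphism $\CA(M,A)\cong(\Omega(M)\otimes V)^0$.

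The main obstacle is bookkeeping: making the ``$\varepsilon^0$-part determines $\beta_0$, $\varepsilon^1$-part is what remains'' argument precise requires fixing once and for all an honest description of the cone differential (including how the internal $d_V$ interacts with $d\varepsilon=1$) and then running the triangular elimination cleanly in both directions; the conceptual content is just that $\varepsilon$ is a contracting homotopy for $A$, so $\Omega(M)\otimes A$ is $\Omega(M)\otimes V$ ``doubled and cancelled,'' but one should present it as an explicit degree-filtered induction rather than invoke an abstract acyclicity statement, since the target is a specific vector space and not a cohomology group.
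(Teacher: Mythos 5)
Your argument is correct in substance but takes a genuinely different route from the paper. The paper first invokes the general fact, established earlier in the section, that for an acyclic DGLA $A$ the exact sequence $0 \to H^{-1} \to \CA(M,A) \to \SA(M,A) \to H^0 \to 0$ collapses to an isomorphism $\CA(M,A)\cong\SA(M,A)=(\Omega(M)\otimes A)^0_{\text{closed}}$; it then observes in one line that a degree-zero element $\alpha\otimes x+\beta\otimes y\varepsilon$ is closed iff $y=x$ and $\beta=d\alpha$, so projection onto the non-$\varepsilon$ part gives the bijection with $(\Omega(M)\otimes V)^0$. You instead work directly in $(\Omega(M)\otimes A)^{-1}/(\text{exact})$ and perform the elimination by hand. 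That is legitimate and more self-contained (it does not rely on the acyclicity lemma), but it costs you the bookkeeping you complain about, all of which evaporates in the paper's formulation. Two specific points where your write-up should be tightened. First, your phrase ``the class $[\eta+\sigma\varepsilon]$ factors through $[\sigma\varepsilon]$'' is literally false: since $d(\eta\varepsilon)=\pm\eta+(d_{\mathrm{dR}}\eta)\varepsilon$, one has $[\eta+\sigma\varepsilon]=[(\sigma\mp d_{\mathrm{dR}}\eta)\varepsilon]$, so the class is \emph{not} a function of $\sigma$ alone; what you actually get (and all you need) is surjectivity of $\sigma\mapsto[\sigma\varepsilon]$. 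Second, your anxiety about the internal differential on $V$ is moot: in the proposition's convention $V$ is a bare graded vector space and $d=d/d\varepsilon$, so the total differential on $(\Omega(M)\otimes A)$ sends $\beta_0+\beta_1\varepsilon$ to $d_{\mathrm{dR}}\beta_0\pm\beta_1+(d_{\mathrm{dR}}\beta_1)\varepsilon$, and injectivity is a two-line computation: $\sigma\varepsilon=d\beta$ forces $\beta_1=\mp d_{\mathrm{dR}}\beta_0$ from the $\varepsilon^0$-component (note this determines $\beta_1$ from $\beta_0$, not the reverse as you first wrote), whence $\sigma=\pm d_{\mathrm{dR}}\beta_1=-d_{\mathrm{dR}}^2\beta_0=0$. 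With those repairs your proof is complete and yields the same bijection $\sigma\mapsto[\sigma\varepsilon]$ as the paper's (whose inverse is exactly the remark following the proposition, $\alpha\otimes x+d\alpha\otimes x\varepsilon=d(\alpha\otimes x\varepsilon)$).
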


\begin{proof}
Since $\CA(M, A) \cong \SA(M,A)=(\Omega(M) \otimes A)^0_\text{closed}$, we consider
an element $\alpha \otimes x + \beta \otimes y \varepsilon \in (\Omega(M) \otimes A)^0$.
The closedness condition reads
$$
d(\alpha \otimes x + \beta \otimes  y \varepsilon) =
d\alpha \otimes x - \beta \otimes y + d\beta \otimes y\varepsilon = 0.
$$
Hence, $x=y$ and $\beta=d\alpha$, and the projection 
$(\Omega(M) \otimes A)^{0}_\text{closed} \to (\Omega(M) \otimes V)^0$ mapping
$\alpha \otimes x + d\alpha \otimes x \varepsilon \to \alpha \otimes x$ is
an isomorphism.
\end{proof}

\begin{rem}
Note that $\alpha \otimes x + d\alpha \otimes x \varepsilon = d(\alpha \otimes x\varepsilon)$,
where $\alpha \otimes x \varepsilon \in (\Omega(M) \otimes A)^{-1}$ defines an element
of $\CA(M, A)$.
\end{rem}

The current algebra functor is contravariant with respect to $M$
and covariant with respect to $A$. 
If $0\to A\to B\to C\to 0$ is a short  exact sequence of DGLAs, we obtain from Proposition \ref{prop:exact}
an exact sequence of Lie algebras,
\begin{multline}
0 \to {\rm im} \Bigl(H^{-2}(\Omega(M) \otimes C)  \to H^{-1}(\Omega(M) \otimes A)\Bigr)\to \\
  \to  \CA(M, A)  \to  \CA(M, B)  \to  \CA(M, C)  \to  0.
\end{multline}
Again, if $C$ is acyclic, it degenerates  
to a short exact sequence of current algebras,
$$
0 \to \CA(M, A) \to \CA(M, B)\to \CA(M, C) \to 0.
$$
In many examples, $C$ is equal to $C\g$ or $D\g$. These DGLAs are acyclic, and we obtain
short exact sequences of current algebras.

If $C$ is acyclic, we have $H^0(\Omega(M \otimes C))=0$, and we
 obtain a short exact sequence of Lie algebras
$$
0 \to \SA(M, A) \to \SA(M,B) \to \SA(M, C) \to 0.
$$


\subsection{Examples}

In this Section, we apply the  functor $\CA$ to obtain several examples of current algebras on manifolds.

\subsubsection{$A=C\g$} \label{subsec:CACg}
Let $\g$ be a Lie algebra. The cone $C\g$ is an acyclic DGLA. 
Hence, $\CA(M, C\g) \cong \Omega^0(M) \otimes \g=C^\infty(M, \g)$. It is easy to see that the Lie bracket
of $\CA(M, C\g)$ coincides with the point-wise Lie bracket on $C^\infty(M, \g)$. Indeed,
the derived bracket of two elements $f \otimes I(x), g \otimes I(y) \in (\Omega(M) \otimes C\g)^{-1}$
is given by formula,
\begin{align*}
\{ f \otimes I(x), g \otimes I(y) \}  & =  [f \otimes I(x), d( g \otimes I(y)) ] \\
& =  [f \otimes I(x), dg \otimes I(y) + g \otimes L(y)] \\
& =  fg \otimes [I(x), L(y)] = fg \otimes I([x,y]),
\end{align*}
as required.

\subsubsection{$A=D\g$} \label{subsec:CADg}
In the case of $A=D\g$, it is difficult to give a compact description of $\CA(M, D\g)$. By Proposition \ref{prop:acyclic}, 
$D\g$ is acyclic. It is a cone $CV$ over a graded vector space $V$ with $V^0 =\g, V^{-1}=0, V^{-2}=S^2\g, V^3={\rm ker}(s: \g \otimes S^2\g \to S\g^3)$ {\em etc.}
Here the map $s: \g \otimes S^2\g \to S\g^3$ is the symmetrization.
Let $\pi: D\g \to C\g$ be the natural projection. Then, the short exact sequence $0 \to {\rm ker}(\pi) \to D\g \to C\g \to 0$
gives rise to a short exact sequence of current algebras
$$
0 \to \CA(M, {\rm ker}(\pi)) \to \CA(M, D\g) \to C^\infty(M, \g) \to 0.
$$
In particular, $\CA(M, {\rm ker}(\pi))$ contains a subspace isomorphic to $\Omega^2(M) \otimes S^2\g$.
Repeating the computation of section \ref{subsec:CACg}, we obtain
\begin{align*}
\{ f \otimes \I(x), g \otimes \I(y) \}  & =  [f \otimes \I(x), d( g \otimes \I(y)) ] \\
& =  [f \otimes \I(x), dg \otimes \I(y) + g \otimes l(y)] \\
& =  fdg \otimes [\I(x), \I(y)] + fg \otimes [\I(x), l(y)] \\
& =  fdg \otimes  2d\I(xy)  +   fg \otimes \I([x,y]) \\
& =  - 2 df \wedge dg \otimes \I(xy) + fg \otimes \I([x,y]).
\end{align*}
Here the element $df \wedge dg \otimes \I(xy) \in (\Omega(M) \otimes D\g)^{-1}/{\rm exact}$ is the image of
$df \wedge dg \otimes xy \in \Omega^2(M) \otimes S^2\g$.

\subsubsection{$A=D_p\g$}
Recall that for $p \in (S^n\g^*)^\g$ we have a short exact sequence of DGLAs
$0 \to \R[2n-2]   \to D_p\g \to D\g \to 0$ which induces a short exact sequence of
current algebras
$$
0 \to \CA(M, \R[2n-2]) \to \CA(M, D_p\g) \to \CA(M, D\g) \to 0,
$$
where $\CA(M, \R[2n-2])=\Omega^{2n-3}(M)/\Omega^{2n-3}_{\rm exact}(M)$. If $M$ is a compact
connected orientable manifold of dimension $2n-3$, $\CA(M, \R[2n-2]) \cong \R$ and
we obtain a central extension of $\CA(M, D\g)$ by a line.

For $n=2$, one can choose $M=S^1$. In this case, $\CA(M, D\g)=\CA(M, C\g)$ for 
dimensional reasons. Redoing again the calculation of the previous two sections, we obtain
\begin{align*}
\{ f \otimes I(x), g \otimes I(y) \}  & = [f \otimes I(x), d( g \otimes I(y)) ] \\
& =  [f \otimes I(x), dg \otimes I(y) + g \otimes L(y)] \\
& =  -2 fdg \otimes p(x,y) c + fg \otimes I([x,y]).
\end{align*}
The isomorphism $\Omega^{1}(S^1)/\Omega^{1}_{\rm exact}(S^1) \cong \R$ 
is given by the integral of a 1-form over the circle. Hence, the cocycle term
in the Lie bracket reads $-2p(x,y) \int \, fdg$ which coincides (up to normalization)
with the standard Kac-Moody central extension of the loop algebra.

For $n=3$, we choose $M$ to be a compact orientable 3-manifold. 
In this case, 
$$
\CA(M, D\g)=C^\infty(M, \g) \oplus \big(\Omega^2(M) \otimes S^2\g\big)
\oplus \big(\Omega^3(M) \otimes {\rm ker}(\g \otimes S^2\g \to S^3\g)\big).
$$
Here the map $\g \otimes S^2\g \to S^3\g$ is the symmetrization.
The computation of the Lie bracket elements of $C^\infty(M, \g)$ is exactly the same as in section \ref{subsec:CADg}.
However, there is a new feature in the following Lie bracket,
\begin{align*}
\{ \alpha \otimes \tilde{\I}(xy), f \otimes \tilde{\I}(z)\} & =  [\alpha \otimes \tilde{\I}(xy), d(f \otimes \tilde{\I}(z))] \\
& =  [\alpha \otimes \tilde{\I}(xy), df \otimes \tilde{\I}(z) + f \otimes l(z)] \\
& =  -f \alpha \otimes \tilde{\I}(\ad_z(xy)) + \alpha \wedge df \otimes 
[\tilde{\I}(xy), \tilde{\I}(z)].
\end{align*}
The last Lie bracket is of the form
\begin{multline*}
[\tilde{\I}(xy),\tilde{\I}(z)]  =  
\frac{1}{3} \left(2[{\I}(xy), \tilde{\I}(z)] - [\tilde{\I}(xz), \tilde{\I}(y)]
- [\tilde{\I}(yz), \tilde{\I}(x)] \right) \\
 +  
 \frac{1}{3}\left( [\tilde{\I}(xy), \tilde{\I}(z)] + [\tilde{\I}(xz), \tilde{\I}(y)]
+ [\tilde{\I}(yz), \tilde{\I}(x)] \right), 
\end{multline*}
where the first term is an element of 
${\rm ker}(\g \otimes S^2\g \to S^3\g)$, and the second term
can be represented as
$$
 \frac{1}{3}\left( [\tilde{\I}(xy), \tilde{\I}(z)] + [\tilde{\I}(xz), \tilde{\I}(y)]
+ [\tilde{\I}(yz), \tilde{\I}(x)] \right) = \tilde{\I}(xyz) - p(xyz)c.
$$
Again, the isomorphism $\Omega^3(M)/\Omega^{3, {\rm exact}}(M) \cong \R$ is given by the integral over $M$,
and the new cocycle term reads $- p(xyz) \int_M \alpha \wedge df$.

\subsubsection{Faddeev-Mickelsson-Shatashvili current algebra}
Recall Section \ref{ex:fms}:  let $p\in (S^n\g^*)^\g$, $e\in (S\g^*\otimes W\g)^g$ such that $d_\g e=p\otimes 1 - 1 \otimes p$,
and let $A_{FMS}=\big(C\g\ltimes W\g[2n-2]\big)_{(e(0))}$ be the semi-direct product of $C\g$ and $W\g[2n-2]$ with differential $d'=d-[e(0), \cdot ]$. 
In fact, the only part of the differential which is changed is $d' I(x)= L(x) - I_{W\g}(x) e(0)$.

The differential on $W\g[2n-2]$ is induced by the Weil differential. Hence, the embedding $\mathbb{R}[2n-2] \to W\g[2n-2]$ is a chain map
(and a homomorphism of abelian DGLAs). As a consequence, we obtain a short exact sequence of DGLAs $0 \to \R[2n-2] \to A_{FMS} \to A' \to 0$,
where $A'=\big(C\g\ltimes W^+\g[2n-2]\big)_{(e(0))}$ is an acyclic DGLA. Then, 
we obtain a
short exact sequence of current algebras
$$
0 \to \Omega^{2n-3}(M)/\Omega^{2n-3}_{\rm exact}(M) \to \CA(M, A_{FMS}) \to \CA(M, A') \to 0.
$$
If $M$ is a compact connected orientable manifold of dimension $2n-3$, we have $\Omega^{2n-3}(M)/\Omega^{2n-3}_{\rm exact}(M) \cong \R$
with an isomorphism defined by integration. 

One can view the abelian current algebra $\CA(M, W^+\g[2n-2])$ as a space of local
functional of $\g$-connections on $M$. 
Recall that a $\g$-connection on $M$ defines a homomorphism of graded
commutative algebras $W\g \to \Omega(M)$.
Taking into account that 
$\bigl(W^+\g[2n-2] \otimes\Omega(M)\bigr)^{-1} \cong \bigl(W^+\g\otimes\Omega(M)\bigr)^{2n-3}$, we obtain (for each $\g$-connection) a map
$$
\CA(M, W^+\g[2n-2])\to \Omega(M)^{2n-3}/\Omega(M)^{2n-3}_{\rm exact} \cong \R.
$$
The Lie algebra $\CA(M, A')$ is therefore an abelian extension of $\CA(M, C\g)=C^\infty(M, \g)$
by functionals on the space of $\g$-connections.

Computing the Lie bracket of the elements $f \otimes x, g \otimes y \in C^\infty(M, \g)$ yields
\begin{align*}
\{ f \otimes I(x), g \otimes I(y) \}  & =  [f \otimes I(x), d( g \otimes I(y)) ] \\
& =  [f \otimes I(x), dg \otimes I(y) + g \otimes L(y) - I_{W\g}(y) e(0)] \\
& =  fg \otimes (I([x,y]) - I_{W\g}(x)I_{W\g}(y) e(0)).
\end{align*}
The cocycle term reads $ - fg \otimes I_{W\g}(x)I_{W\g}(y) e(0)$. 
For $n=2$, it reads $- fg \otimes p([x,y], \cdot)$. For higher $n$,
this formula defines the Faddeev-Mickelsson-Shatashvili (FMS) cocycle \cite{F,FS,Mickelsson} on the
Lie algebra of maps from $M$ to $\g$ with values in local functionals of $\g$-connections.

\subsubsection{Truncated FMS current algebra}
The construction of Section \ref{ex:fms} defines a DGLA homomorphism
\begin{equation}\label{eq:B}
\rho: D_p\g\to \bigl(C\g\ltimes W\g[2n-2]\bigr)_{(e(0))}=A.
\end{equation}
Hence, we obtain an induced homomorphism of current algebras $\CA(M, D_p\g) \to \CA(M, A_{FMS})$.
Note that the map $\rho$ restricts to the identity mapping the central line $\R[2n-2] \subset D_p\g$ to
the line $\R[2n-2] \subset W\g[2n-2]$. As a consequence, we obtain an induced homomorphism 
$\rho': D\g \to A'=A_{FMS}/\R[2n-2]$ and a homomorphism of the corresponding current algebras
$\CA(M, D\g) \to \CA(M, A')$.

The image of the map $\rho$ is a DGLA $B_{FMS}\subset A_{FMS}$. 
We refer to $\CA(M, B_{FMS})$ as
to truncated FMS current algebra. We work out in detail the example of $n=3$.
In this case, we have an exact sequence of DGLAs $0\to \R[4] \to B_{FMS} \to B' \to 0$,
where $B'=B_{FMS}/\R[4]$ is acyclic. Hence, we obtain an exact sequence of current 
algebras
$$
0 \to \Omega^{3}(M)/\Omega^{3}_{\rm exact}(M) \to \CA(M, B_{FMS}) \to \CA(M, B') \to 0,
$$
where $\CA(M, B') = C^\infty(M, \g) \oplus (\Omega^2(M) \otimes \g^*)$. 
For the Lie bracket between elements $f \otimes x, g\otimes y \in C^\infty(M,\g)$, we have
\begin{align*}
\{ f\otimes \tilde{I}(x), g\otimes \tilde{I}(y)\} & = 
[ f \otimes \tilde{I}(x), d(g \otimes \tilde{I}(y))] \\
& =  [f \otimes \tilde{I}(x), dg \otimes \tilde{I}(y) + g \otimes L(y) ] \\
& =  fdg \otimes [\tilde{I}(x), \tilde{I}(y)] + fg \otimes [\tilde{I}(x), L(y)] \\
& =  2 fdg \otimes \mu(p(x,y, \cdot) + fg \otimes \tilde{I}([x,y]) \\
& =  2df \wedge dg \otimes \theta(p(x,y, \cdot)) + fg \otimes \tilde{I}([x,y]).
\end{align*}
Here the term $2df \wedge dg \otimes \theta(p(x,y, \cdot))$ is another representative
of the FMS cocycle.

\section{Groups integrating current algebras $\SA(M, A)$}

In this Section, we construct sheaves of groups integrating sheaves of Lie algebras
$\SA(M, A)$, and in particular we apply this technique to $\SA(M, D\g)$ and $\SA(M, D_p\g)$.

\newcommand{\SG}{\mathcal{SG}}
\subsection {The group $\SG(M,A,G)$ integrating the current algebra $\SA(M,A)$}


To simplify the task, we first consider  a simpler Lie algebra 
$\overline\SA(M,A)=(\Omega(M) \otimes A)^0$.
Denote $A^0_\text{closed}=\g$,
let $G$ be a connected Lie group with Lie algebra $\g$, and let $A'=\oplus_{n<0} A^n$ be  the sum of 
graded components of $A$ of negative degrees. Suppose that the adjoint action of $\g$ on $A'$ lifts to an action of $G$.
Denote by 
$$
G(M)=C^\infty(M, G)
$$ 
the group of smooth maps from $M$ to $G$ (with pointwise multiplication).
Let  $\mathcal{U}(A')$ be the degree completion of the universal enveloping
algebra of $A'$, and let
$$
\mathcal{H}(M) = \Omega(M) \otimes  \mathcal{U}(A')
$$ 
be the Hopf algebra with coproduct
induced by the one of $\mathcal{U}(A')$. Group-like elements of degree zero 
in $\mathcal{H}(M)$ form a group 
$$
H(M)=\exp\bigl((\Omega(M)\otimes A')^0\bigr) \subset \mathcal{H}(M).
$$
The group $G(M)$ acts on $\mathcal{H}(M)$ by Hopf
algebra automorphisms. This action induces an action of $G(M)$ on $H(M)$ by group 
automorphisms. We define 
$$
\overline\SG(M,A,G) = G(M) \ltimes H(M)
$$ 
as the semi-direct
product of $G(M)$ and $H(M)$. Note that the group $\overline\SG(M,A,G)$ depends
of the choice of the connected Lie group $G$ integrating the Lie algebra
$\g=A^0_{\rm closed}$.

It is easy to see that $\overline\SG(M,A,G)$
is an integration of $\overline\SA(M,A)$. Indeed, let $(h_t, g_t)$ be a one-parameter subgroup
of $\overline\SG(M,A)$, where $h_t \in H(M), g_t\in G(M), t\in \R$. Then, its
derivative at $t=0$ is a pair $(u, x)$, where $x \in C^\infty(M, \g)$ and
$u\in (\Omega(M) \otimes A')^0$. The pair $(u,x)$
defines an element of $(\Omega(M) \otimes A)^0$. In the other direction,
every such an element can be exponentiated to a one-parameter subgroup 
of $\overline\SG(M,A,G)$.

\begin{rem}
Let $A=C\g$. Note that $C\g^0_{\rm closed} =C\g^0=\g$.
Let $G$ ba a connected Lie group integrating $\g$.
We have $A'=\g \varepsilon \subset C\g$, and
$H(M)=\{ \exp(u); \,\, u\in \Omega^1(M) \otimes \g\varepsilon\}$.
Elements of $\overline\SG(M,C\g,G)$ are of the form $hg$, where 
$h\in H(M)$ and $g\in G(M)$.
\end{rem}

The Lie subalgebra 
$\SA(M,A) \subset \overline\SA(M,A)$ is singled out by the closedness condition.
That is, a pair $(u, x) \in \overline\SA(M,A)$ belongs to $\SA(M,A)$ if $du+dx=0$. 
The analogue of this condition at the group level is as follows: 
\begin{equation} \label{eq:dgg}
\SG(M, A, G)= \{ (h,g) \in \overline\SG(M,A); \,\, h^{-1}dh+dg\,g^{-1}=0\in\Omega(M)\otimes A \} .
 \end{equation}
 Note that
 $$
 h^{-1}dh+dg\,g^{-1}=h^{-1} d(hg) \, g^{-1},
 $$
and equation \eqref{eq:dgg}  expresses the fact that $hg\in\SG(M,A, G)$ is closed.
For the product $hg=(h_1g_1)(h_2g_2)$, we have
$$
h^{-1}d(hg)g^{-1}=(g_1h_2g_1^{-1}) \left(h_1^{-1}d(h_1g_1)g_1^{-1}\right) (g_1h_2g_1^{-1})^{-1}
+g_1 \left(h_2^{-1}d(h_2g_2)g_2^{-1}\right) g_1^{-1} .
$$
Hence, $\SG(M,A, G)$ is indeed a subgroup of $\overline\SG(M,A,G)$.

Again, it is easy to see that $\SG(M,A,G)$ is an integration of $\SA(M,A)$.
Indeed, let $(h_t, g_t) \in \SG(M,A,G)$ be a one-parameter subgroup,
and let $(u,x)\in \overline\SA(M,A)$ be its derivative at $t=0$. Then, equation \eqref{eq:dgg}
implies $du+dx=0$. In the other direction, for $\exp(t(u+x))=h_tg_t$ we have
$$
h_t^{-1}d(h_tg_t)g_t^{-1} =   {\rm Ad}_{g_t^{-1}} 
\frac{1-\exp(-t\, {\rm ad}_{u+x})}{ t\, {\rm ad}_{u+x}} \, d(u+x) =0
$$
if $du+dx=0$. Here we have used the standard expression for the derivative
of the exponential map.

Note that for every manifold $M$ the functor $\SA(\cdot, A)$ produces a sheaf of Lie algebras, 
where the Lie algebra of sections over $U\subset M$ is defined as $\SA(U, A)$. 
Similarly, $\SG(\cdot, A,G)$ defines a sheaf of groups.

\begin{rem}
For $A=C\g$, we consider equation \eqref{eq:dgg},
where $h=\exp(u)$ and $ u \in \Omega^1(M) \otimes \g\varepsilon$. 
Note that 
$$
h^{-1}dh=e^{-u}de^u=\frac{1-\exp(-{\rm ad}_u)}{{\rm ad}_u} \, du
=du - \frac{1}{2} [u,du].
$$ 
Here we have used that ${\rm ad}_u^k=0$ for $k\geq 2$. Let 
$e_a$ be a basis of $\g$. Then, $u=u^a \otimes e_a\varepsilon$,
$du=du^a \otimes e_a \varepsilon - u^a \otimes e_a$, and
$$
du-\frac{1}{2} [u,du]= - u^a \otimes e_a + \left(du^a\otimes e_a - \frac{1}{2} 
[u^a \otimes e_a, u^b \otimes e_b] \right) \varepsilon.
$$
Denote $\hat{u}=u^a \otimes e_a \in \Omega^1(M)\otimes \g$, and compute
$$
h^{-1}dh+dgg^{-1}=(dgg^{-1} - \hat{u}) + 
\left( d\hat{u} - \frac{1}{2} \, [\hat{u},\hat{u}]\right) \varepsilon .
$$
Hence, a pair $(\exp(u), g)$ defines an element of $\SG(M, C\g,G)$ if and only if
$\hat{u}= dgg^{-1}$ and $\hat{u}$ is a Maurer-Cartan element. The second condition
follows from the first one since $d(dgg^{-1})=(dgg^{-1})^2=[dgg^{-1},dgg^{-1}]/2$.

In conclusion, $h$ is uniquely determined by $g$,
and  the forgetful map $(h,g)\mapsto g$ defines a group isomorphism
$\SA(M,C\g,G) \cong G(M)$. The inverse map $G(M)\to\SA(M,C\g,G)$  is given by $g\mapsto(\exp(I(dgg^{-1})),g)$.
\end{rem}

\begin{rem}
Let $(A, G)$ be a pair, where $A$ is a DGLA and $G$ is a connected Lie group 
integrating $\g=A^0_{\rm closed}$ such that the adjoint action of $\g$ on $A$ lifts to an action of $G$ on $A$. We define a morphism of such pairs $(A,G)\to (B,H)$
as pairs of a DGLA homomorphism $A \to B$ and a group homomorphism 
$G \to H$ integrating the Lie algebra homomorphism $A^0_{\rm closed} \to
B^0_{\rm closed}$. Then, a morphism of pairs $(A,G) \to (B,H)$ induces
a group homomorphism $\SG(M, A,G) \to \SG(M, B,H)$. In particular, if
$A^0_{\rm closed}=B^0_{\rm closed}$ and $G=H$, we obtain a canonical
group homomorphism.
\end{rem}

\subsection{The group $C_pG(M)=\SG(M,C_p\g,G)$}
Let $p\in (S^2\g^*)^\g$. The group $C_pG(M):=\SG(M,C_p\g,G)$ is contained in the preimage of $\SG(M,C\g,G)$ under the projection map
$\overline{\SG}(M,C_p\g,G)\to \overline{\SG}(M,C\g,G)$. Therefore, it consist of elements
of the form
$$
\Bigl(h=\exp\bigl(\omega \otimes c+\tilde I(dgg^{-1})\bigr),\, g\Bigr),
$$ 
where $g:M\to G$, $\omega\in\Omega^2(M)$, and $h^{-1}dh + dgg^{-1}=0$. 
A straightforward calculation (see e.g. Proposition 5.7 in \cite{AM}) shows that
$$
h^{-1}dh+dg\,g^{-1}=(d\omega+g^*\eta_p) \otimes c,
$$
where $\eta_p\in\Omega^3(G)$ is the Cartan 3-form
(that is, a bi-invariant differential form on $G$ defined by the map
$(x,y,z)\mapsto p(x,[y,z])$ at the group unit).
The group $C_pG(M)$ can therefore be identified with the set of pairs
$$
\bigl(g:M\to G,\,\omega\in\Omega^2(M)\bigr)\text{ such that }d\omega+g^*\eta_p=0.
$$
Since $\exp(\tilde I(u))\exp(\tilde I(v))=\exp(\tilde I(u+v)+p(u,v) c/2)$, the group law is expressed in terms of these pairs,
$$(g_1,\omega_1)(g_2,\omega_2)=\bigl(g_1g_2,\omega_1+\omega_2+\frac{1}{2}(g_1\times g_2)^*\rho_p\bigr)$$
where $\rho_p\in\Omega^2(G\times G)$ is defined by $\rho_p=p(\pi_1^*\theta^L,\pi_2^*\theta_R)$ with $\theta^L$ and $\theta^R$  left-invariant and right-invariant
Maurer-Cartan forms of $G$ and $\pi_{1,2}: G\times G \to G$ projections
on the first and second factor, respectively.

\subsection{The group $DG(M)=\SG(M, D\g,G)$}
In this Section, we consider the group $\SG(M,D\g,G)$. We will use a
shorthand notation $DG(M):=\SG(M,D\g,G)$.

Observe that the exact sequence of DGLAs
$$
0 \to {\rm ker}(D\g \to C\g) \to D\g \to C\g \to 0
$$ 
gives rise (since $C\g$ is acyclic) to an exact sequence of current algebras
$$
0\to \SA(M, {\rm ker}(D\g \to C\g)) \to \SA(M, D\g) \to \SA(M, C\g) \to 0,
$$
which in turn lifts to an exact sequence of groups
$$
1 \to \SG(M, {\rm ker}(D\g \to C\g),1) \to DG(M) \to G(M).
$$
Here $1$ stands for the trivial group (${\rm ker}(D\g \to C\g)^0=0$), 
we have used  that $D\g^0\cong C\g^0 =\g$, and 
in both cases we have chosen the same connected Lie group $G$
integrating $\g$.

\subsubsection{The group $DG(M)$ and $\g$-connections}
We begin by observing  an interesting relation between the group $DG(M)$ and the space 
$\mathcal{G}(M)=\Omega^1(M) \otimes \g$ of $\g$-connections on $M$.
Let $\widehat{\mathcal{G}}(M)$ denote the action groupoid of $G(M)$ on $\mathcal{G}(M)$,
where the action is by gauge transformations,
$$
g: A \mapsto A^g={\rm Ad}_{g^{-1}} A + g^{-1}dg.
$$
Again, constructions of $\mathcal{G}(M)$ and of the gauge action are local,
and we obtain a sheaf of groupoids over $M$.

There is a natural morphism of sheaves of groupoids $\G(M)\to G(M)$ by forgetting a connection
(here we view the group $G(M)$ as a groupoid with an object set consisting of one point).
Recall that a $\g$-connection 
$A\in \mathcal{G}(M)$ gives rise to a homomorphism of $\g$-differential algebras $W\g\to\Omega(M)$, defined by $\theta\mapsto A$, $t\mapsto F_A=d A+[A,A]/2$. Under this map, the image of an element $\alpha(\theta,t)$ is  $\alpha(A,F_A)$.
Let $g\in G(M)$, $A\in \mathcal{G}(M)$, and $m_{g,A}:A\to A^g$ the corresponding morphism in  $\G(M)$. We define a map $\mu: \G(M) \to \overline\SG(M,D\g,G)$
given by the following formula,
$$
\mu(m_{g,A})=\Phi(A,F_A)\,g\,\Phi(A^g, F_{A^g})^{-1},
$$
where $\Phi$ is defined in Theorem \ref{prop:phi}. 

\begin{thm} \label{prop:mu1}
The map $\mu: \G(M) \to \overline\SG(M,D\g,G)$ is a morphism of groupoids.
It takes values in $DG(M)=\SG(M,D\g,G)$, and its composition with the natural projection
$DG(M) \to G(M)$ coincides with the forgetful map $m_{g,A} \mapsto g$.
\end{thm}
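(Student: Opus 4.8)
The plan is to verify the three assertions in turn; the first two are essentially formal, and the substance lies in the last. Throughout write $P_A := \Phi(A,F_A) \in \Omega(M)\otimes\mathcal{U}(D\g)$ for the image of the element $\Phi$ of Theorem~\ref{prop:phi} under the morphism induced by the $\g$-differential algebra map $W\g\to\Omega(M)$, $\theta\mapsto A$, $t\mapsto F_A$, and set $\omega_A := P_A^{-1}dP_A$, where $d=d_{\mathrm{dR}}+d_{D\g}$. First I would record two consequences of the functoriality of Theorem~\ref{prop:phi}. Since $\Phi$ is group-like of degree zero and lies in $W\g\otimes\mathcal{U}(\mathcal{L}(S^+\g))$ and the substitution is a degree-preserving coalgebra map, $P_A$ is group-like of degree zero in $\Omega(M)\otimes\mathcal{U}(\mathcal{L}(S^+\g))$, hence $P_A\in H(M)$, and likewise $P_{A^g}^{-1}\in H(M)$. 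Because the substitution is also a chain map, applying it to the identity $\Phi^{-1}d\Phi = -i(t)+l(\theta)$ gives
$$\omega_A = l(A) - i(F_A) \in (\Omega(M)\otimes D\g)^1,$$
where $l(A)=A^a l(e_a)$ and $i(F_A)=F_A^a\otimes\mathcal{I}(e_a)+F_A^aF_A^b\otimes\mathcal{I}(e_ae_b)+\cdots$ (a finite sum, since $M$ is finite-dimensional).

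Identities and composition are then immediate. For $g$ the constant map to the unit, $A^g=A$ and $\mu(m_{e,A})=P_AP_A^{-1}=1$. For composable morphisms $m_{g,A}\colon A\to A^g$ and $m_{h,A^g}\colon A^g\to A^{gh}$ (recall the gauge action is a right action), the products telescope:
$$\mu(m_{g,A})\,\mu(m_{h,A^g}) = P_A\,g\,P_{A^g}^{-1}\,P_{A^g}\,h\,P_{A^{gh}}^{-1} = P_A\,(gh)\,P_{A^{gh}}^{-1} = \mu(m_{gh,A}),$$
so $\mu$ is a morphism of groupoids into $\overline\SG(M,D\g,G)$. For the forgetful statement I would rewrite $\mu(m_{g,A})=P_A\,g\,P_{A^g}^{-1}$ in the semidirect form $G(M)\ltimes H(M)$ as $\bigl(P_A\cdot{}^g(P_{A^g}^{-1})\bigr)\cdot g$, where ${}^g(\cdot)$ denotes the pointwise $G(M)$-action on $H(M)$; its image under the projection $\overline\SG(M,D\g,G)\to G(M)$ is then $g$, which is the forgetful map. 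The projection $DG(M)\to G(M)$ in the statement is the restriction of this one, so once $\mu$ is known to take values in $DG(M)$ the claim follows.

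The heart of the proof is that $\mu(m_{g,A})$ satisfies the closedness condition \eqref{eq:dgg}. Writing $\mu(m_{g,A})=h\,g$ with $h=P_AQ^{-1}$ and $Q:={}^gP_{A^g}\in H(M)$, I would compute $h^{-1}dh+dg\,g^{-1}$ directly, using two facts. First, $d_{D\g}$ is $\g$-equivariant (it is defined by $\g$-invariant formulas) and hence commutes with the pointwise conjugation by $g$, whereas $d_{\mathrm{dR}}$ does not: for $X\in\Omega(M)\otimes\mathcal{U}(D\g)$ one has $d({}^gX)={}^g(dX)+[dg\,g^{-1},\,{}^gX]$. Second, the gauge laws $A^g=\mathrm{Ad}_{g^{-1}}A+g^{-1}dg$ and $F_{A^g}=\mathrm{Ad}_{g^{-1}}F_A$. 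Expanding $d(P_AQ^{-1})$ and the corresponding formula for $dQ=d({}^gP_{A^g})$, one finds
$$h^{-1}dh+dg\,g^{-1}=Q\bigl(\omega_A+l(dg\,g^{-1})-{}^g\omega_{A^g}\bigr)Q^{-1},$$
so it remains to check that the bracketed term vanishes. By the formula for $\omega_\bullet$ above together with the $\g$-equivariance of $l$ and $i$, the bracketed term equals $l\bigl(A+dg\,g^{-1}-\mathrm{Ad}_gA^g\bigr)+i(\mathrm{Ad}_gF_{A^g})-i(F_A)$, and the gauge laws give $\mathrm{Ad}_gA^g=A+dg\,g^{-1}$ and $\mathrm{Ad}_gF_{A^g}=F_A$; hence it vanishes and $\mu(m_{g,A})\in DG(M)$.

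I expect the only real friction to be bookkeeping: carrying the pointwise conjugations ${}^g(\cdot)$ and the inhomogeneous term of $d({}^gX)$ correctly through the computation of $h^{-1}dh+dg\,g^{-1}$, and keeping straight that $d$ acts on $H(M)$ as $d_{\mathrm{dR}}+d_{D\g}$ but on $G(M)$ only as $d_{\mathrm{dR}}$. Conceptually there is no difficulty: the computation is the exact analogue of the one carried out for $A=C\g$ earlier in this section, with $\phi=\exp(-I(\theta))$ and Proposition~\ref{prop:dphi} replaced by $\Phi$ and Theorem~\ref{prop:phi}.
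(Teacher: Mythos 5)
Your proof is correct and follows essentially the same route as the paper: telescoping for the groupoid property, the pulled-back identity $\Phi(A,F_A)^{-1}d\Phi(A,F_A)=-i(F_A)+l(A)$ from Theorem \ref{prop:phi} together with the gauge laws $\mathrm{Ad}_gA^g=A+dg\,g^{-1}$, $\mathrm{Ad}_gF_{A^g}=F_A$ for closedness, and conjugation past $g$ for the projection statement. The only cosmetic difference is that the paper verifies $d\bigl(\mu(m_{g,A})\bigr)=0$ directly, which by the identity $h^{-1}dh+dg\,g^{-1}=h^{-1}d(hg)\,g^{-1}$ is the same computation you perform after splitting into the semidirect-product form.
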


\begin{proof}
To simplify notation, we denote $\Phi(A)=\Phi(A, F_A)$.
For the composition of morphisms, we have
\begin{align*}
\mu(m_{g,A}) \mu(m_{h, A^g}) & =  
\Big( \Phi(A) g \Phi(A^g)^{-1} \Big) \Big( \Phi(A^g) h \Phi((A^g)^h) \Big) \\
& =  \Phi(A) (gh) \Phi(A^{gh}) \\
& =  \mu(m_{gh, A}) \\
& =   \mu(m_{g, A} \circ m_{h, A^g}).
\end{align*}
Hence, $\mu$ is a morphism of groupoids.

Next, we verify that $\mu(m_{g,A})$ is indeed an element of $DG(M)$.
We compute,
\begin{align*}
d(\mu(m_{g,A})) & =  \Phi(A)\left( (-\iota(F_A)+l(A))g +dg - g(-\iota(F_{A^g})
+l(A^g)) \right) \Phi(A^g)^{-1} \\
& =  \Phi(A) g \left( \iota(F_{A^g}) - \iota({\rm Ad}_{g^{-1}} F_A)+
 l({\rm Ad}_{g^{-1}} A + g^{-1}dg - A^g) \right) \Phi(A^g)^{-1} \\
& =  0,
\end{align*}
where we have used equation  \eqref{eq:phidphi}.

Finally, by compositing $\mu$ with the projection map $\pi: DG(M) \to G(M)$ we obtain
\begin{align*}
(\pi \circ \mu)(m_{g, A}) & =  \pi(\Phi(A) g \Phi(A^g)^{-1}) \\
& =  \pi(\Phi(A) {\rm Ad}_g(\Phi(A^g)^{-1}) g) \\
& =  g,
\end{align*}
as required.
\end{proof}

\subsubsection{Structure of $DG(M)$}
Using the results of the previous section, we can now prove the following proposition.

\begin{prop}
There is an exact sequence of groups
$$
1 \to \SG(M, {\rm ker}(D\g \to C\g),1) \to DG(M) \to G(M) \to 1,
$$
and the map $g \mapsto \mu(m_{g,0})$ is a section of the natural projection $DG(M) \to G(M)$.
\end{prop}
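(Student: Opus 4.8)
The plan is to combine the exact sequence of current algebras that was already derived from the acyclicity of $C\g$ with the section $\mu$ constructed in Theorem \ref{prop:mu1}. The existence of the left-exact sequence
$$
1 \to \SG(M, {\rm ker}(D\g \to C\g),1) \to DG(M) \to G(M)
$$
is exactly the statement recalled at the start of this section, so the only thing left to prove is that the map $DG(M) \to G(M)$ is surjective and that $g \mapsto \mu(m_{g,0})$ is a set-theoretic section of it. Surjectivity will follow immediately once the section claim is established, so the proof reduces to a single verification.

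First I would recall that $m_{g,0}$ denotes the morphism in $\widehat{\mathcal G}(M)$ from the zero connection $0 \in \mathcal G(M)$ to its gauge transform $0^g = g^{-1}dg$. By Theorem \ref{prop:mu1}, $\mu(m_{g,0}) = \Phi(0,0)\, g\, \Phi(g^{-1}dg, F_{g^{-1}dg})^{-1}$ is a well-defined element of $DG(M) = \SG(M,D\g,G)$. Then I would invoke the last assertion of Theorem \ref{prop:mu1}, namely that the composition of $\mu$ with the projection $DG(M) \to G(M)$ is the forgetful map $m_{g,A}\mapsto g$; applied to $m_{g,0}$ this gives that $\mu(m_{g,0})$ projects to $g$. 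Hence $g\mapsto \mu(m_{g,0})$ is a section of the projection as maps of sets, and in particular the projection $DG(M)\to G(M)$ is surjective, completing the exact sequence on the right.

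The one point that requires a small remark is why $g \mapsto \mu(m_{g,0})$ is well-defined as a map $G(M)\to DG(M)$ and not merely defined up to the base point of the groupoid: since $0$ is a canonical (the unique $G(M)$-invariant) connection, the source of $m_{g,0}$ is fixed once and for all, so there is no ambiguity, and $\Phi(0,0)$ is simply the image of $\Phi$ under $\theta\mapsto 0$, $t\mapsto 0$, which is the group unit in $H(M)$ (all terms of $\Phi$ in Theorem \ref{prop:phi} lie in $W^+\g \otimes \mathcal U(\mathcal L(S^+\g))$, hence vanish when $\theta$ is set to zero, except the constant term $1$). Thus $\mu(m_{g,0}) = g\,\Phi(g^{-1}dg, F_{g^{-1}dg})^{-1}$, which makes the formula completely explicit.

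I do not expect any genuine obstacle here: the content has already been done in Theorem \ref{prop:mu1} and in the exact sequence of current algebras derived from acyclicity of $C\g$. The only mild subtlety is that $\mu$ is a morphism of groupoids rather than of groups, so $g\mapsto \mu(m_{g,0})$ need not a priori be a group homomorphism; the proposition only claims it is a \emph{section}, i.e.\ a right inverse as a map of sets, and that is all that is needed to conclude surjectivity and hence the exactness of the sequence. (In fact one can check directly from the groupoid composition law that it fails to be a homomorphism in general, the discrepancy being measured by $\Phi(0,0)=1$ versus $\Phi(g_1^{-1}dg_1,\dots)$, but this is not needed for the statement.)
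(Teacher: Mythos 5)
Your proof is correct and follows the same route as the paper: the left-exactness is the already-established sequence, and surjectivity follows because Theorem \ref{prop:mu1} shows the composition $G(M)\to DG(M)\to G(M)$, $g\mapsto\mu(m_{g,0})\mapsto g$, is the identity. The extra observations (that $\Phi(0,0)=1$, and that the section need only be set-theoretic) are accurate but not needed beyond what the paper's own two-line argument uses.
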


\begin{proof}
By Theorem \ref{prop:mu1}, the map $G(M) \to DG(M) \to G(M)$ defined by composing  the map
$g \mapsto \mu(m_{g,0})$ and the natural projection $DG(M) \to G(M)$ is the identity map. Hence, the projection
$DG(M) \to G(M)$ is surjective, and the sequence of groups in the Proposition is exact.
\end{proof}

Note that the DGLA ${\rm ker}(D\g \to C\g)$ is  acyclic  and  negatively graded. As a complex, 
it can be represented as a cone $CU=U[\varepsilon]$ for some 
negatively graded vector space $U$. The corresponding current algebra $\SA\bigl(M,\ker(D\g\to C\g)\bigr)$ is isomorphic (as a vector space) to
$$
\SA\bigl(M,\ker(D\g\to C\g)\bigr) \cong \bigoplus_{i>0}\Omega^i(M)\otimes U^{-i} .
$$ 
Using the grading induced by the degree of differential forms, we infer that 
this Lie algebra is nilpotent.
Hence, by composing with the exponential map we obtain a bijection
$$\nu:\bigoplus_{i>0}\Omega^i(M)\otimes U^{-i}\to \SG\bigl(M,\ker(D\g\to C\g),1\bigr).$$

\begin{prop}
The map $(g,u)\mapsto \mu(m_{g,0})\,\nu(u)$ defines a bijection
\begin{equation}\label{eq:explDg}
G(M)\times(\bigoplus_{i>0}\Omega^i(M)\otimes U^{-i})\to DG(M).
\end{equation}
\end{prop}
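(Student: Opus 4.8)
The plan is to deduce the statement from the general fact that a short exact sequence of groups carrying a set-theoretic section splits as a bijection of sets, together with the two facts already established. By the previous proposition we have the exact sequence of groups
$$1 \to \SG\bigl(M,\ker(D\g\to C\g),1\bigr) \to DG(M) \xrightarrow{\ \pi\ } G(M) \to 1,$$
and $g\mapsto\mu(m_{g,0})$ is a section of $\pi$, that is, $\pi\bigl(\mu(m_{g,0})\bigr)=g$ for all $g\in G(M)$; this is part of the content of Theorem \ref{prop:mu1}. Moreover, just above the statement we showed that $\nu$ is a bijection from $\bigoplus_{i>0}\Omega^i(M)\otimes U^{-i}$ onto the kernel $\SG\bigl(M,\ker(D\g\to C\g),1\bigr)$. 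The map in \eqref{eq:explDg} is therefore $(g,u)\mapsto s(g)\,\nu(u)$ with $s(g)=\mu(m_{g,0})$, and it remains only to check that such a map is a bijection.

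First I would prove surjectivity. Given $x\in DG(M)$, set $g=\pi(x)$ and consider $\mu(m_{g,0})^{-1}x$. Applying $\pi$ and using $\pi\bigl(\mu(m_{g,0})\bigr)=g$ gives $\pi\bigl(\mu(m_{g,0})^{-1}x\bigr)=g^{-1}g=1$, so $\mu(m_{g,0})^{-1}x$ lies in $\ker\pi=\SG\bigl(M,\ker(D\g\to C\g),1\bigr)$. Since $\nu$ is surjective onto this kernel, there is $u$ with $\mu(m_{g,0})^{-1}x=\nu(u)$, i.e.\ $x=\mu(m_{g,0})\nu(u)$. Then I would prove injectivity: if $\mu(m_{g,0})\nu(u)=\mu(m_{g',0})\nu(u')$, applying $\pi$ and using $\nu(u),\nu(u')\in\ker\pi$ yields $g=g'$; cancelling $\mu(m_{g,0})$ on the left then gives $\nu(u)=\nu(u')$, hence $u=u'$ because $\nu$ is injective. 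This completes the argument.

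I do not expect a genuine obstacle here: all the real work is already packaged in Theorem \ref{prop:mu1} (that $\mu(m_{\cdot,0})$ is a group-theoretic section of the projection $DG(M)\to G(M)$) and in the preceding identification of $\nu$ as a bijection onto $\SG\bigl(M,\ker(D\g\to C\g),1\bigr)$. The only point that deserves an extra remark is that, since $\mu$, $\nu$ and $\pi$ are all defined by formulas local in $M$, the bijection \eqref{eq:explDg} is natural in $M$ and hence identifies the two sheaves of sets $U\mapsto DG(U)$ and $U\mapsto G(U)\times\bigl(\bigoplus_{i>0}\Omega^i(U)\otimes U^{-i}\bigr)$; this is immediate from the same argument applied over each open set.
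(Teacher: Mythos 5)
Your argument is correct and is exactly the paper's proof, merely written out in full: the paper likewise deduces the bijection from the fact that $g\mapsto\mu(m_{g,0})$ is a section of the projection $DG(M)\to G(M)$ and that $\nu$ is a bijection onto the kernel $\SG\bigl(M,\ker(D\g\to C\g),1\bigr)$. The explicit surjectivity/injectivity check you supply is the standard unwinding of that one-line deduction, so there is nothing to add.
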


\begin{proof}
This follows from the facts that the map
$g \mapsto \mu(m_{g,0})$ defines a section of the projection $DG(M) \to G(M)$,
and that the map $\nu$ is bijective.
\end{proof}

\def\cs{\mathit{cs}}
\subsection{The group $D_pG(M)=\SG(M, D_p\g,G)$} In this Section, we study the group
$\SG(M, D_p\g,G)$, the shorthand notation is $D_pG(M)$.

As before, the short exact sequence of DGLAs
$$
0 \to {\rm ker}(D_p\g \to C\g) \to D_p\g \to C\g \to 0
$$
gives rise to a short exact sequence of current algebras
$$
0\to \SA(M, {\rm ker}(D_p\g \to C\g)) \to \SA(M, D_p\g) \to \SA(M, C\g) \to 0,
$$ 
which lifts to an exact sequence of groups
$$
1 \to \SG(M, {\rm ker}(D_p\g \to C\g),1) \to D_pG(M) \to G(M).
$$
In this case, the natural projection $D_pG(M) \to G(M)$ may no longer be
surjective. Again, an important tool in studying this question is the 
gauge groupoid.

\subsubsection{Central extensions of the groupoid $\G(M)$}
Let $p\in (S^n\g^*)^\g$ be an invariant polynomial of degree $n\geq 2$, and let $e_p\in (W\g)^\g$ be such that $d\, e_p=p$. The ambiguity in the choice of $e_p$ is by a closed $\g$-invariant element of $W\g$ of degree $2n-2$. Since 
$W\g$ is acyclic, for any other primitive $e'_p$ we have $e'_p-e_p=df$.

Let us  describe a central extension $\G_p(M)$ of the groupoid $\G(M)$ by
an abelian group  $\Omega^{2n-2}_\text{closed}(M)$. The set of objects is again $\mathcal{G}(M)$,
and the set of morphisms is labeled by triples $(g, A, \alpha)$, where $g \in G(M),
A \in \mathcal{G}(M)$ and $\alpha \in \Omega^{2n-2}(M)$ such that
$d\alpha=e_p(A,F)-e_p(A^g,F_{A^g})$. The composition of morphisms is given by 
$$
m_{g,A,\alpha}m_{h,A^g,\beta}=m_{gh,A,\alpha+\beta},
$$
where
$$
\begin{array}{lll}
d(\alpha+\beta) & = & e_p(A,F_A)-e_p(A^g,F_{A^g})+e_p(A^{g},F_{A^g})-e_p(A^{gh},F_{A^{gh}}) \\
& =& e_p(A,F_A)-e_p(A^{gh},F_{A^{gh}}).
\end{array}
$$

Note that for the
groupoid of global sections of $\G_p(M)$, the natural projection to $G(M)$ may no longer be
surjective. Indeed, the cohomology class of $e_p(A,F_A)-e_p(A^g,F_{A^g})$ in $H^{2n-1}(M)$ coincides
with $[g^* \eta_p] \in H^{2n-1}(M)$, where $\eta_p=e_p(\theta, 0) \in (\wedge \g^*)^\g \subset \Omega(G)$.
If $g^* \eta_p \neq 0$, $\G_p(M)$ does not contain elements which project to $g$.

A different choice of $e_p$ gives rise to an isomorphic sheaf of 
groupoids with an isomorphism given by $\alpha \mapsto \alpha'=\alpha+f(A,F)-f(A^g,F_{A^g})$.
In the Physics literature, $\alpha$ is called the Wess-Zumino action.

Recall that the DGLA $D_p\g$  is a central extension of $D\g$ by the line $\R[2n-2]$. 
Note that $\SA(\R[2n-2],M) \cong 
\Omega(M)^{2n-2}_\text{closed}$. Since $D\g$ is acyclic, we obtain an exact sequence 
of sheaves of Lie algebras
$$
0\to \Omega(M)^{2n-2}_{\rm closed} \to \SA(M,D_p\g) \to \SA(M,D\g) \to 0.
$$
This exact sequence integrates to an exact sequence of sheaves of groups,
$$
1 \to \Omega(M)^{2n-2}_{\rm closed} \to D_pG(M) \to DG(M).
$$

Consider a map $\mu_p: \G_p(M) \to \overline\SG(M,D_p\g,G)$ defined by formula
\begin{equation}  \label{mu2}
\mu_p(m_{g,A,\alpha}) = 
\Phi_p(A, F_A) g\, \Phi_p(A^g,F_{A^g})^{-1} e^{\alpha \otimes c} ,
\end{equation}
where $c$ is the generator of the central line of $D_p\g$, and $\Phi_p$ is defined 
in Section 5.

\begin{prop} \label{prop7}
The map $\mu_p$ is a morphism of groupoids, it takes values 
in $D_pG(M) \subset \overline\SG(M,D_p\g,G)$, and it restricts to identity on
$\Omega^{2n-2}_\text{closed}$.
\end{prop}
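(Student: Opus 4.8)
The plan is to verify the three assertions in order, using Proposition \ref{prop:mu1} and Proposition \ref{prop:e_p} as the main inputs. First I would check that $\mu_p$ is a morphism of groupoids. The computation is parallel to the one in the proof of Theorem \ref{prop:mu1}: for composable morphisms $m_{g,A,\alpha}$ and $m_{h,A^g,\beta}$ one multiplies the two defining expressions \eqref{mu2}, the inner factors $\Phi_p(A^g,F_{A^g})^{-1}\Phi_p(A^g,F_{A^g})$ cancel, and the two central factors $e^{\alpha\otimes c}e^{\beta\otimes c}$ combine to $e^{(\alpha+\beta)\otimes c}$ since $c$ is central; the composition law $m_{gh,A,\alpha+\beta}$ on $\G_p(M)$ then matches. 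One should note that the central factor $e^{\alpha\otimes c}$ commutes with everything in $\overline\SG(M,D_p\g,G)$, so no reordering subtlety arises. The restriction to $\Omega^{2n-2}_{\rm closed}$ is immediate: a morphism of the form $m_{1,A,\alpha}$ with $d\alpha=0$ maps to $\Phi_p(A,F_A)\cdot 1\cdot\Phi_p(A,F_A)^{-1}e^{\alpha\otimes c}=e^{\alpha\otimes c}$, which is the identification of $\Omega^{2n-2}_{\rm closed}$ inside $D_pG(M)$ coming from the exact sequence $1\to\Omega^{2n-2}_{\rm closed}(M)\to D_pG(M)\to DG(M)$.

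The substantive point is that $\mu_p(m_{g,A,\alpha})$ actually lies in $D_pG(M)$, i.e.\ that it is closed in $\Omega(M)\otimes D_p\g$. Here I would compute $d$ of the right-hand side of \eqref{mu2} using the Leibniz rule. By Proposition \ref{prop:e_p}, $\Phi_p^{-1}d\Phi_p=\iti(t)-l(\theta)-e_p\otimes c$ (with $d e_p=p$), so differentiating $\Phi_p(A,F_A)$ produces, besides the terms $-\iota(F_A)+l(A)$ already present in the $DG(M)$ computation of Theorem \ref{prop:mu1}, an extra term $-e_p(A,F_A)\otimes c$; similarly differentiating $\Phi_p(A^g,F_{A^g})^{-1}$ produces an extra $+e_p(A^g,F_{A^g})\otimes c$, and differentiating $e^{\alpha\otimes c}$ produces $d\alpha\otimes c$. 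The non-central part vanishes exactly as in the proof of Theorem \ref{prop:mu1} (the gauge-transformation identities for $A^g$ and $F_{A^g}$). The central part collects to $\bigl(-e_p(A,F_A)+e_p(A^g,F_{A^g})+d\alpha\bigr)\otimes c$, which vanishes precisely because of the defining constraint $d\alpha=e_p(A,F_A)-e_p(A^g,F_{A^g})$ imposed on morphisms of $\G_p(M)$. Hence $d(\mu_p(m_{g,A,\alpha}))=0$ and $\mu_p$ takes values in $D_pG(M)$.

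The main obstacle, and the step deserving the most care, is the bookkeeping in this last differentiation: one must track how $d$ acts through the conjugation $\Phi_p(A)g\Phi_p(A^g)^{-1}$, keeping the non-commutative factors in the right order, and correctly identify which contributions are central (hence combine freely) and which must cancel via the gauge identities. This is essentially the same calculation as in Theorem \ref{prop:mu1} with one extra scalar summand at each of the three differentiations; the content is that the three central contributions $-e_p(A,F_A)\otimes c$, $+e_p(A^g,F_{A^g})\otimes c$ and $d\alpha\otimes c$ add up to zero by the very condition defining $\G_p(M)$. Finally, to conclude "morphism of groupoids takes values in $D_pG(M)$" cleanly, I would remark that $\mu_p$ is a lift of $\mu$ along the projection $D_pG(M)\to DG(M)$ (it reduces to $\mu$ modulo the central line), which also makes the surjectivity/closedness statements compatible with Theorem \ref{prop:mu1}.
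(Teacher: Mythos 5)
Your proposal is correct and follows essentially the same route as the paper: closedness is obtained from Proposition \ref{prop:e_p} via the extra central terms $-e_p(A,F_A)\otimes c$, $+e_p(A^g,F_{A^g})\otimes c$, $d\alpha\otimes c$ cancelling by the defining constraint of $\G_p(M)$, the groupoid property follows as in Theorem \ref{prop:mu1} using centrality of $e^{\alpha\otimes c}$, and the restriction statement is read off at $g=1$. The paper's proof is just a terser version of the same argument.
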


\begin{proof}
The proof is similar to Theorem \ref{prop:mu1}. We have,
$$
d \mu_p(m_{g,A,\alpha})=\mu_p(m_{g,A,\alpha}) (e(A^g,F_{A^g})-e(A,F_A)+d\alpha)\otimes c =0.
$$
Hence, $\mu_p$ takes values in $D_pG(M)$.  
For the morphism of groupoids, one follows the proof of Proposition \ref{prop:mu1}
and uses the fact that $m_{g,A,\alpha} \circ m_{f,A^g,\beta}=m_{g,A,\alpha+\beta}$.
Finally, for $g=1$ we obtain $\mu_p(m_{1,A,\alpha})=\exp(\alpha \otimes c)$ which
coincides with the image of $\alpha$ under injection $\Omega^{2n-2}_{\rm closed} \to D_pG(M)$.
\end{proof}

\subsubsection{Structure of the group $D_pG(M)$}
The DGLA ${\rm ker}(D_p\g \to C\g)$ fits into a short exact sequence
$$
0 \to \R[2n-2] \to {\rm ker}(D_p\g \to C\g) \to {\rm ker}(D\g \to C\g) \to 0
$$
giving rise to a short exact sequence of current algebras
$$
0 \to \Omega^{2n-2}_{\rm closed}(M)  \to \SA(M, {\rm ker}(D_p\g \to C\g)) \to \SA(M, {\rm ker}(D\g \to C\g)) \to 0.
$$
All of these Lie algebras being nilpotent, the exact sequence lifts to an exact sequence of groups
\begin{equation} \label{exact_sequence}
1 \to \Omega^{2n-2}_{\rm closed} \to \SG(M, {\rm ker}(D_p\g \to C\g),1) \to \SG(M, {\rm ker}(D\g \to C\g),1) \to 1.
\end{equation}
Furthermore, by choosing a section of the projection $\SA(M, {\rm ker}(D_p\g \to C\g)) \to \SA(M, {\rm ker}(D\g \to C\g))$,
and by composing with the exponential map we obtain a section
$$
\nu_p:  \left(\bigoplus_{i>0}\Omega^i(M)\otimes U^{-i}\right) \cong \SA(M, {\rm ker}(D\g \to C\g)) \to \SG(M, {\rm ker}(D_p\g \to C\g),1).
$$

Recall that $\eta_p=e_p(\theta, 0) \in \Omega^{2n-1}(G)$.
\begin{prop}
The image of the natural projection $D_pG(M) \to DG(M)$ is the set of elements of $DG(M)$
which project to  maps $g: M \to G$ with vanishing $[g^* \eta_p] \in H^{2n-1}(M)$.
\end{prop}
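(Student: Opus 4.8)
The plan is to reduce the statement to a lifting question about one explicit element of $DG(M)$ and then to identify the obstruction with the de Rham class $[g^*\eta_p]$. Using the bijection \eqref{eq:explDg}, I would write an arbitrary element of $DG(M)$ uniquely as $\mu(m_{g,0})\,\nu(u)$ with $g\in G(M)$ and $u\in\bigoplus_{i>0}\Omega^i(M)\otimes U^{-i}$. By the surjectivity in \eqref{exact_sequence} together with compatibility of the projections, $\nu(u)$ is the image of $\nu_p(u)\in D_pG(M)$; since $D_pG(M)\to DG(M)$ is a group homomorphism, $\mu(m_{g,0})\,\nu(u)$ lies in its image if and only if $\mu(m_{g,0})$ does. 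As $D_pG(M)\to DG(M)\to G(M)$ is the canonical projection, the whole statement follows once one shows that $\mu(m_{g,0})$ is in the image of $D_pG(M)\to DG(M)$ exactly when $[g^*\eta_p]=0$ in $H^{2n-1}(M)$.

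For the ``if'' direction I would argue directly with $\mu_p$: choose $\alpha\in\Omega^{2n-2}(M)$ with $d\alpha=-g^*\eta_p$, which is possible since $\eta_p$ is bi-invariant and hence closed. The connections $0$ and $0^g=g^{-1}dg$ are both flat, so $e_p(0,F_0)=e_p(0,0)=0$ (as $e_p\in(W\g)^\g$ has positive degree it has no constant term) and $e_p(0^g,F_{0^g})=e_p(g^{-1}dg,0)=g^*\eta_p$; hence $m_{g,0,\alpha}$ is a genuine morphism of $\G_p(M)$ and $\mu_p(m_{g,0,\alpha})\in D_pG(M)$ by Proposition \ref{prop7}. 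Since the projection $D_p\g\to D\g$ annihilates $c$ and carries $\Phi_p$ to $\Phi$, this element maps to $\Phi(0,F_0)\,g\,\Phi(0^g,F_{0^g})^{-1}=\mu(m_{g,0})$, as required; multiplying by $\nu_p(u)$ then lifts $\mu(m_{g,0})\,\nu(u)$.

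For the ``only if'' direction — which I expect to be the crux — I would control the ambiguity in the lifts. Lift $\mu(m_{g,0})$ naively to $\tilde\gamma_0:=\Phi_p(0,F_0)\,g\,\Phi_p(0^g,F_{0^g})^{-1}=g\,\Phi_p(g^{-1}dg,0)^{-1}\in\overline{\SG}(M,D_p\g,G)$; it again projects to $\mu(m_{g,0})$ but need not obey the condition defining \eqref{eq:dgg}. Running the computation in the proof of Proposition \ref{prop7} with $A=0$ and $\alpha=0$ gives that the quantity $h_0^{-1}dh_0+dg\,g^{-1}$ (for $\tilde\gamma_0=h_0g$, $h_0\in H(M)$) equals $(g^*\eta_p)\otimes c$. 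Any lift of $\mu(m_{g,0})$ to $\overline{\SG}(M,D_p\g,G)$ differs from $\tilde\gamma_0$ by an element of the central kernel of $\overline{\SG}(M,D_p\g,G)\to\overline{\SG}(M,D\g,G)$, which is $\{e^{\alpha\otimes c}:\alpha\in\Omega^{2n-2}(M)\}$ (it integrates $\ker(D_p\g\to D\g)=\R c$), so an arbitrary lift has the form $\tilde\gamma_0\,e^{\alpha\otimes c}$. Multiplying by the central element $e^{\alpha\otimes c}$ adds $d\alpha\otimes c$ to the quantity above, which therefore becomes $(g^*\eta_p+d\alpha)\otimes c$; membership in $D_pG(M)$ forces it to vanish, so $g^*\eta_p=-d\alpha$ and $[g^*\eta_p]=0$.

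The one genuinely delicate point is to check that the kernel of $\overline{\SG}(M,D_p\g,G)\to\overline{\SG}(M,D\g,G)$ is exactly $\{e^{\alpha\otimes c}\}$ with $\alpha$ ranging over \emph{all} $(2n-2)$-forms rather than only closed ones — this is precisely what makes the obstruction a de Rham class; it follows from $\overline{\SG}=G(M)\ltimes H(M)$ and $H(M)=\exp\bigl((\Omega(M)\otimes A')^0\bigr)$ together with $\ker(D_p\g'\to D\g')=\R c$. The remaining ingredients — closedness of bi-invariant forms on $G$, vanishing of the constant term of $e_p$, flatness of $0^g$, and the effect of central multiplication on the condition of \eqref{eq:dgg} — are routine.
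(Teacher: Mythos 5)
Your proof is correct and follows essentially the same route as the paper: you reduce to lifting $\mu(m_{g,0})$ (via \eqref{eq:explDg} and the surjectivity in \eqref{exact_sequence}), identify all preimages in $\overline\SG(M,D_p\g,G)$ as $\mu(m_{g,0})e^{\alpha\otimes c}$ using the splitting $D_p\g=D\g\oplus\R c$, and compute the closedness defect $(g^*\eta_p+d\alpha)\otimes c$, so that the obstruction is exactly the class $[g^*\eta_p]$. This matches the paper's argument step for step, including the choice $d\alpha=-g^*\eta_p$ in the ``if'' direction.
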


\begin{proof}
Let $f$ be an element of $DG(M)$, and  $g$ be the projection of $f$ to $G(M)$.
Then, $f_0=\mu(m_{g,0})^{-1}f$ projects to the group unit of
$G(M)$. That is, $f_0 \in \SG(M, {\rm ker}(D\g \to C\g),1)$. The exact sequence
\eqref{exact_sequence} implies that $f_0$ admits a lift to
$\SG(M,{\rm ker}(D_p\g \to C\g),1) \subset D_pG(M)$. Hence, $f$ admits a lift to
$D_pG$ if and only if so does $\mu(m_{g,0})$.

Recall that, as a graded Lie algebra, $D_p\g$ is a direct sum 
$D_p\g =D\g\oplus\mathbb{R}c$ of $D\g$ and the central line $\R c$ with $c$
the generator of degree $2-2n$. Hence, we have 
$\overline\SG(M, D_p\g,G)= \overline\SG(M, D\g,G)\times\exp(\Omega^{2n-2}(M)\otimes c)$.
Let us consider the subgroup
$$
Q=DG(M)\times\exp\big(\Omega^{2n-2}(M)\otimes c\big) \subset \overline\SG(M, D_p\g,G)
$$
containing $D_pG(M)$. Lifts of $\mu(m_{g,0})$ to $Q$ are of the form
$\hat{g}=\mu(m_{g,0}) \exp(\alpha \otimes c)$, where $\alpha\in \Omega^{2n-2}(M)$.
Since $\mu(m_{g,0})^{-1}d_p\mu(m_{g,0})=g^*\eta_p\otimes c$, we have
$$
\hat{g}^{-1}d_p\hat{g}=(g^*\eta_p + d\alpha) \otimes c.
$$
If $[g^*\eta_p]\neq 0$, then $\hat{g}^{-1}d_p \hat{g}\neq 0$, and $\mu(m_{g,0})$ does
not lift to $D_pG(M)$. If $[g^*\eta_p]=0$, we can achieve $\hat{g}\in D_pG(M)$ for a suitable choice of $\alpha$.
\end{proof}

Let us denote by ${}_pG(M)$ the set of pairs
$(g:M\to G, \alpha\in\Omega^{2n-2}(M))$ such that $g^*\eta_p + d \alpha=0$. 
In other words, ${}_pG(M)$ is the set of morphisms in the groupoid $\hat{\mathcal{G}}_p(M)$ starting at the object $A=0$.

\begin{prop}
The map $(g,\alpha, u) \to \mu_p(m_{g,0,\alpha}) \nu_p(u)$ defines a bijection
\begin{equation}\label{eq:explDpg}
{}_pG(M)\times   \left(\bigoplus_{i>0}\Omega^i(M)\otimes U^{-i}\right) \to D_pG(M).
\end{equation}
The maps  \eqref{eq:explDg} and \eqref{eq:explDpg} and the natural projections ${}_pG(M)\to G(M)$ and $D_pG(M)\to DG(M)$ form a commutative diagram.
\end{prop}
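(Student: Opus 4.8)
The plan is to deduce the statement from the already-proved bijectivity of \eqref{eq:explDg}, using the central extension of sheaves of groups $1\to\Omega^{2n-2}_{\mathrm{closed}}(M)\to D_pG(M)\to DG(M)$ constructed above (the kernel is central since it integrates $\R[2n-2]\subset D_p\g$). First I would check that the projection $D_pG(M)\to DG(M)$ is compatible with the two parametrizations. The quotient $D_p\g\to D\g$ is a left inverse of the splitting $s\colon D\g\to D_p\g$, so it sends $\Phi_p=(\mathrm{id}\otimes s)\Phi$ to $\Phi$ and $e^{\alpha\otimes c}$ to $1$; hence $\mu_p(m_{g,0,\alpha})$ projects to $\mu(m_{g,0})$. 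Likewise $\nu_p$ was built so that composing it with the group projection recovers $\nu$, so $\nu_p(u)$ projects to $\nu(u)$. Therefore $\mu_p(m_{g,0,\alpha})\nu_p(u)$ projects to $\mu(m_{g,0})\nu(u)$, which is exactly the commutativity of the diagram formed by \eqref{eq:explDg}, \eqref{eq:explDpg} and the projections ${}_pG(M)\to G(M)$, $D_pG(M)\to DG(M)$.

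For surjectivity I would take $f\in D_pG(M)$, project it to $\bar f\in DG(M)$, and use bijectivity of \eqref{eq:explDg} to write $\bar f=\mu(m_{g,0})\nu(u)$ for unique $(g,u)$. Since $\bar f$ lies in the image of $D_pG(M)\to DG(M)$, the description of that image given above forces $[g^*\eta_p]=0$, so there is $\alpha_0$ with $g^*\eta_p+d\alpha_0=0$, i.e. $(g,\alpha_0)\in{}_pG(M)$. The element $f_0:=\mu_p(m_{g,0,\alpha_0})\nu_p(u)$ then projects to $\bar f$, so $ff_0^{-1}$ lies in the central kernel, say $ff_0^{-1}=e^{\beta\otimes c}$ with $\beta\in\Omega^{2n-2}_{\mathrm{closed}}(M)$. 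Using that $\mu_p$ is a groupoid morphism with $\mu_p(m_{1,0,\beta})=e^{\beta\otimes c}$ (it restricts to the identity on $\Omega^{2n-2}_{\mathrm{closed}}$) together with the composition rule $m_{1,0,\beta}\,m_{g,0,\alpha_0}=m_{g,0,\alpha_0+\beta}$, I get $f=\mu_p(m_{g,0,\alpha_0+\beta})\nu_p(u)$, and $g^*\eta_p+d(\alpha_0+\beta)=0$ shows $(g,\alpha_0+\beta)\in{}_pG(M)$; so $f$ is in the image of \eqref{eq:explDpg}.

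For injectivity, from $\mu_p(m_{g,0,\alpha})\nu_p(u)=\mu_p(m_{g',0,\alpha'})\nu_p(u')$ I would project to $DG(M)$ and invoke injectivity of \eqref{eq:explDg} to conclude $g=g'$ and $u=u'$; cancelling $\nu_p(u)$ leaves $\mu_p(m_{g,0,\alpha})=\mu_p(m_{g,0,\alpha'})$. Since $d\alpha=-g^*\eta_p=d\alpha'$, the form $\gamma=\alpha-\alpha'$ is closed, $m_{1,0,\gamma}$ is a legitimate morphism of $\G_p(M)$, and the same groupoid-morphism identity gives $\mu_p(m_{g,0,\alpha})=e^{\gamma\otimes c}\mu_p(m_{g,0,\alpha'})$, hence $e^{\gamma\otimes c}=1$; since the exponential map is injective on the pronilpotent abelian subspace $\Omega^{2n-2}(M)\otimes c$, this gives $\gamma=0$, i.e. $\alpha=\alpha'$. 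I expect the only real work to be bookkeeping around the gauge groupoid $\G_p(M)$ — in particular that $0^g=g^{-1}dg$ is flat, so that $e_p(0^g,F_{0^g})=g^*\eta_p$ and $m_{1,0,\gamma}$ makes sense precisely when $\gamma$ is closed, together with the two compatibilities $\Phi_p\mapsto\Phi$ and $\nu_p\mapsto\nu$; the only substantive inputs are the bijectivity of \eqref{eq:explDg} and the description of the image of $D_pG(M)\to DG(M)$ already obtained.
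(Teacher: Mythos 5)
Your argument is correct and follows essentially the same route as the paper: peel off the $G(M)$-part with $\mu_p(m_{g,0,\alpha})$ (using the image description of $D_pG(M)\to DG(M)$ to get $\alpha$), peel off the nilpotent part with $\nu_p(u)$, and absorb the remaining central factor $e^{\beta\otimes c}$ into $\alpha$ via the groupoid composition rule. The only difference is cosmetic: you make explicit the injectivity and the compatibility $\Phi_p\mapsto\Phi$, $\nu_p\mapsto\nu$ underlying the commutative diagram, which the paper states without detail.
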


\begin{proof}
For the first statement, let $f \in D_pG(M)$, and denote its image in $G(M)$ by $g$. Then, there is a form $\alpha \in \Omega^{2n-2}(M)$ such that
$g^* \eta_p + d\alpha =0$, and $\hat{g}:=\mu_p(m_{g,0,\alpha}) \in D_pG(M)$ with the same projection $g \in G(M)$.
Hence, $\hat{g}^{-1} f \in \SG(M, {\rm ker}(D_p\g \to C\g)$, and it is of the form $\exp(u')$ for some 
$u' \in  \SA(M, {\rm ker}(D_p\g \to C\g))$.
The element $u'$ projects to $u \in \left(\bigoplus_{i>0}\Omega^i(M)\otimes U^{-i}\right) \cong \SA(M, {\rm ker}(D\g \to C\g)$. Then, $\hat{g}^{-1}f\nu_p(u)^{-1}$ projects
to the group unit in $\SG(M, {\rm ker}(D_p\g \to C\g),1)$. Hence,  it is of the form $\exp(\beta \otimes c)$ for some $\beta \in \Omega^{2n-2}_{\rm closed}(M)$.
For the element $f$ we obtain $f=\mu_p(m_{g,0,\alpha}) \exp(\beta \otimes c) \nu_p(u) = \mu_p(m_{g,0,\alpha+\beta}) \nu_p(u)$, as required.

For the second statement, the natural projections  ${}_pG(M)\to G(M)$ and $D_pG(M)\to DG(M)$ are forgetful maps with respect to differential forms $\alpha \in \Omega^{2n-2}(M)$.
This implies commutativity of the diagram announced in the Proposition.
\end{proof}

\subsection{Torsors and obstructions}

A torsor over the sheaf of groupoids $\G(M)$ is a principal $G$-bundle over $M$ with a choice of a principal $\g$-connection. More explicitly, if $U_i$ is an open cover of $M$, a (descent data for a) $\G(M)$-torsor on $M$ is defined by a choice of local $\g$-connections $A_i\in \Omega^1(U_i) \otimes \g$ and of the gluing maps $g_{ij}:U_{ij}\to G$ such that $g_{ij}$ is a cocycle, and $A_j=A_i^{g_{ij}}$.

\begin{thm}
Let $\mathcal{T}$ be a $\G(M)$-torsor over $M$ with underlying principal $G$-bundle $P$.
It  lifts to a $\G_p(M)$-torsor if and only if the Chern-Weil class
${\rm cw}(p)=[p(F)]$ of $P$ vanishes.
\end{thm}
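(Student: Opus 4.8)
The plan is to recast the lifting problem as a \v{C}ech obstruction living in $\check H^2(M,\Omega^{2n-2}_{\rm closed})$ and then to identify that obstruction, via the standard isomorphism $\check H^2(M,\Omega^{2n-2}_{\rm closed})\cong H^{2n}_{dR}(M)$, with the Chern--Weil class $[p(F)]$. The two directions of the equivalence will then both fall out of this single identification: the lift exists iff the obstruction vanishes iff ${\rm cw}(p)=0$.

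First I would fix a good cover $\{U_i\}$ of $M$ together with descent data $(A_i,g_{ij})$ for $\mathcal{T}$, so that $A_i\in\Omega^1(U_i)\otimes\g$, $A_j=A_i^{g_{ij}}$ and $g_{ij}g_{jk}=g_{ik}$. Recalling the description of the groupoid $\G_p(M)$ and of the morphism $\mu_p$ (Proposition \ref{prop7}), a lift of $\mathcal{T}$ to a $\G_p(M)$-torsor is precisely a choice of forms $\alpha_{ij}\in\Omega^{2n-2}(U_{ij})$ satisfying
$$d\alpha_{ij}=e_p(A_i,F_{A_i})-e_p(A_j,F_{A_j}),\qquad \alpha_{ij}-\alpha_{ik}+\alpha_{jk}=0\ \text{ on }\ U_{ijk},$$
where the second equation is exactly the cocycle identity $m_{g_{ij},A_i,\alpha_{ij}}\circ m_{g_{jk},A_j,\alpha_{jk}}=m_{g_{ik},A_i,\alpha_{ik}}$. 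Next I would note that $\alpha_{ij}$ solving just the differential equation always exists on a good cover: since $p$ is $\mathrm{Ad}$-invariant and $F_{A^g}=\mathrm{Ad}_{g^{-1}}F_A$, one has $p(F_{A_i})=p(F_{A_j})$, so $e_p(A_i,F_{A_i})-e_p(A_j,F_{A_j})$ is a closed $(2n-1)$-form on the contractible set $U_{ij}$, hence exact. For any such choice the form $c_{ijk}:=\alpha_{ij}-\alpha_{ik}+\alpha_{jk}$ lies in $\Omega^{2n-2}_{\rm closed}(U_{ijk})$ and is a \v{C}ech $2$-cocycle; the only freedom in the $\alpha_{ij}$ compatible with the differential equation is adding closed forms, which changes $(c_{ijk})$ by a \v{C}ech coboundary. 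Thus $\mathcal{T}$ determines a well-defined class $\mathrm{ob}(\mathcal{T})\in\check H^2(M,\Omega^{2n-2}_{\rm closed})$ (independence of the cover being the usual refinement argument), and a lift of $\mathcal{T}$ exists if and only if $\mathrm{ob}(\mathcal{T})=0$.

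It then remains to identify $\mathrm{ob}(\mathcal{T})$ with $[p(F)]$. Because the sheaves $\Omega^q$ are soft, the short exact sequences $0\to\Omega^q_{\rm closed}\to\Omega^q\to\Omega^{q+1}_{\rm closed}\to0$ give isomorphisms $\check H^2(M,\Omega^{2n-2}_{\rm closed})\cong\check H^1(M,\Omega^{2n-1}_{\rm closed})\cong H^{2n}_{dR}(M)$, both sides vanishing when $2n>\dim M$ (in which case ${\rm cw}(p)=0$ automatically and lifts always exist). Chasing this isomorphism in the \v{C}ech--de Rham double complex is precisely the zig-zag $c_{ijk}=\delta\alpha$, $\ d\alpha=\delta\, e_p(A_\bullet,F_{A_\bullet})$, $\ d\, e_p(A_i,F_{A_i})=p(F_{A_i})$, where --- once more by $\mathrm{Ad}$-invariance of $p$ --- the local forms $p(F_{A_i})$ glue to the global closed $2n$-form $p(F)$ of the connection on $\mathcal{T}$ patched from the $A_i$. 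Hence $\mathrm{ob}(\mathcal{T})$ corresponds to $[p(F)]$, which by classical Chern--Weil theory is the connection-independent class ${\rm cw}(p)$ of the underlying bundle $P$. Combining this with the previous paragraph gives the theorem.

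The main obstacle I expect is the sign bookkeeping in the \v{C}ech--de Rham zig-zag and pinning down the two connecting homomorphisms --- i.e.\ verifying that the chosen orientation of $\check H^2(M,\Omega^{2n-2}_{\rm closed})\cong H^{2n}_{dR}(M)$ sends $\mathrm{ob}(\mathcal{T})$ to $[p(F)]$ rather than to a sign or a coboundary thereof. Everything else (existence of $\alpha_{ij}$, well-definedness of $\mathrm{ob}(\mathcal{T})$, softness of $\Omega^q$) is routine, and the structure of $\G_p(M)$ needed here has already been worked out above.
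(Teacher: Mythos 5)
Your proposal is correct, and its raw ingredients are the same as the paper's (the primitive $e_p$ with $de_p=p$, $\mathrm{Ad}$-invariance of $p$ so that $p(F_{A_i})=p(F_{A_j})$ on overlaps, and softness of the sheaves of forms), but you organize them differently. The paper never forms an obstruction class: it proves the two implications directly, using softness exactly once in each direction --- given a lift $(\alpha_{ij})$ it splits $\alpha_{ij}=\beta_i-\beta_j$ (vanishing of $\check H^1$ of the soft sheaf $\Omega^{2n-2}$) and observes that $e_p(A_i,F_{A_i})-d\beta_i$ glue to a global primitive of $p(F)$; conversely, given $d\gamma=p(F)$ it sets $\omega_i=e_p(A_i,F_{A_i})-\gamma$, takes local primitives $\beta_i$ on a fine cover, and defines $\alpha_{ij}=\beta_i-\beta_j$, which automatically satisfies both the differential condition and the groupoid cocycle identity. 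You instead package the lifting problem as a single class $\mathrm{ob}(\mathcal{T})\in\check H^2(M,\Omega^{2n-2}_{\rm closed})$ (correctly noting that solutions of the differential condition alone always exist on a good cover, that the allowed modifications are by \emph{closed} forms, and hence that the lift exists iff $\mathrm{ob}(\mathcal{T})=0$), and then identify $\mathrm{ob}(\mathcal{T})$ with $[p(F)]$ via the dimension-shifting isomorphisms $\check H^2(M,\Omega^{2n-2}_{\rm closed})\cong\check H^1(M,\Omega^{2n-1}_{\rm closed})\cong H^{2n}_{dR}(M)$. Your route is a bit heavier on sheaf-cohomology machinery but yields a sharper statement (the obstruction \emph{is} the Chern--Weil class, not merely an equivalence of vanishings) and disposes of both directions at once; it is in effect the paper's explicit zig-zag run through the \v{C}ech--de Rham double complex. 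The sign ambiguity you flag is genuinely immaterial for the theorem, since only the vanishing of the class is at stake; the one point worth spelling out if you write this up is the identification of the connecting homomorphisms with your zig-zag, which is standard but is exactly where the paper's hands-on computation lives.
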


\begin{proof}
A lift of a $\G(M)$-torsor to a $\G_p(M)$-torsor amounts to a choice of a Cech cocycle $\alpha_{ij}\in \Omega^{2n-2}(U_{ij})$  such that $d\alpha_{ij}=e_p(A_i,F_{A_i})-e_p(A_j,F_{A_j})$.
Assmue that such a cocycle exists. 
Since the Cech cohomology  $H^1(\Omega^{2n-2}(M))$ vanishes, there exist (for a sufficiently fine cover) local forms $\beta_i \in \Omega^{2n-2}(U_i)$ such that $\alpha_{ij}=\beta_i-\beta_j$. Then, the local forms
$e_p(A_i,F_{A_i})-d\beta_i=e_p(A_j,F_{A_j}) - d\beta_j$ define a global section of $\Omega^{2n-1}(M)$.
The de Rham differential of this globally defined differential form is
$d(e_p(A_i,F_{A_i}) - d\beta_i)=de_p(A_i,F_{A_i})=p(F_{A_i})$. Hence, ${\rm cw}(p)=[p(F_{A_i})]=0$ in $H^{2n}(M)$.

In the other direction: if $[p(F_{A_i})]=0$, there is a differential form $\gamma \in \Omega^{2n-1}(M)$ such that $d\gamma=p(F_{A_i})$ on $U_i$. Let $\omega_i=e_p(A_i,F_{A_i})-\gamma \in \Omega^{2n-1}(U_i)$. We have $d\omega_i=0$, and if the cover is sufficiently fine
we find $\beta_i \in \Omega^{2n-2}(U_i)$ such that $d\beta_i=\omega_i$. Put
$\alpha_{ij}=\beta_i-\beta_j$. We have, 
$$
d\alpha_{ij}=\omega_i-\omega_j=(e_p(A_i,F_{A_i})-\gamma)-(e_p(A_j,F_{A_j})-\gamma)=
e_p(A_i,F_{A_i})-e_p(A_j,F_{A_j}),
$$
as required.
\end{proof}

The groupoid morphism $\mu: \G(M) \to DG(M)$ can be used to map $\G(M)$-torsors to $DG(M)$-torsors. Since its lift $\mu_p$ is equal to identity on $\Omega^{2n-2}_{\rm closed}$
(see Proposition \ref{prop7}), 
the obstruction to lifting the corresponding $DG(M)$-torsor to a $D_pG(M)$-torsor is again ${\rm cw}(p)$.

Finally, let us describe general $DG(M)$-torsors. It is enough to notice that $DG(M)$ is an extension of $G(M)$ by a sheaf of nilpotent groups and that this sheaf is acyclic (this follows from the acyclicity of the kernel of $D\g\to C\g$). Therefore, the classification of $DG(M)$-torsors is the same as the classification of principal $G$-bundles ($G(M)$-torsors).
We thus have the following result.

\begin{thm}   \label{torsor}
  The classification of $DG(M)$-torsors is the same as the classification of principal $G$-bundles; the correspondence is given by the morphism $DG(M)\to G(M)$. The obstruction to lifting a $DG(M)$-torsor to a $D_pG(M)$-torsor is exactly the Chern-Weil class ${\rm cw}(p)$.
\end{thm}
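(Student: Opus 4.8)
The plan is to prove the two assertions of Theorem \ref{torsor} separately, building on the structural facts about $DG(M)$ and $D_pG(M)$ established earlier in this section. For the first assertion, I would make precise the statement that $DG(M)$ is an extension of $G(M)$ by a sheaf of nilpotent groups: by the exact sequence of sheaves of groups $1\to\SG(M,\ker(D\g\to C\g),1)\to DG(M)\to G(M)\to 1$ together with the section $g\mapsto\mu(m_{g,0})$, every $DG(M)$-torsor has an underlying $G(M)$-torsor (i.e.\ a principal $G$-bundle $P$), and conversely the bijection \eqref{eq:explDg} shows that the fiber of the forgetful map is modeled on the sheaf $N:=\SG(M,\ker(D\g\to C\g),1)$. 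The key point I would emphasize is that $N$ is a sheaf of groups admitting a filtration whose associated graded pieces are the sheaves $\Omega^i(M)\otimes U^{-i}$, each of which is a fine sheaf (a module over the fine sheaf $C^\infty_M$), hence acyclic in positive \v Cech degree; by induction up the filtration, $N$ itself has vanishing $H^1$. Therefore the extension splits on any cover after refinement, and the classification of $DG(M)$-torsors coincides with that of principal $G$-bundles, the correspondence being exactly the morphism $DG(M)\to G(M)$.

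For the second assertion I would reduce the lifting problem for $DG(M)$-torsors to the lifting problem for $\G(M)$-torsors already solved in the preceding theorem. Given a $DG(M)$-torsor, choose a principal connection on the underlying bundle $P$; this produces a $\G(M)$-torsor $\mathcal T$ with the same underlying bundle, and by the preceding theorem and Proposition \ref{prop7} (which asserts $\mu_p$ restricts to the identity on $\Omega^{2n-2}_{\rm closed}$), $\mathcal T$ lifts to a $\G_p(M)$-torsor, and hence $\mu(\mathcal T)$ lifts to a $D_pG(M)$-torsor, precisely when ${\rm cw}(p)=[p(F)]$ vanishes. It remains to check that the obstruction does not depend on the choice of connection and genuinely controls the lifting of the original $DG(M)$-torsor rather than just of the image of $\mu$: here I would invoke the exact sequence $1\to\Omega^{2n-2}_{\rm closed}\to D_pG(M)\to DG(M)$ together with the bijection \eqref{eq:explDpg}, which identifies $D_pG(M)$-torsors fibered over a fixed $DG(M)$-torsor with the \v Cech-analogue of choosing the cocycle $\alpha_{ij}$ — exactly the data whose existence was shown in the previous theorem to be equivalent to ${\rm cw}(p)=0$.

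Concretely, the proof of the second assertion runs as follows. A $DG(M)$-torsor is given by descent data $(g_{ij},u_i)$ with $g_{ij}$ a $G$-cocycle and $u_i$ encoding the nilpotent part; using \eqref{eq:explDg} we may, after refinement, assume the $u_i$ are trivial, so the torsor is the image under $\mu$ of the $\G(M)$-torsor determined by local connections $A_i$ with $A_j=A_i^{g_{ij}}$. A lift to a $D_pG(M)$-torsor, by \eqref{eq:explDpg} and formula \eqref{mu2}, amounts to a \v Cech $1$-cochain $\alpha_{ij}\in\Omega^{2n-2}(U_{ij})$ with $d\alpha_{ij}=e_p(A_i,F_{A_i})-e_p(A_j,F_{A_j})$ satisfying the cocycle identity, which is exactly the data appearing in the proof of the preceding theorem; hence such a lift exists iff ${\rm cw}(p)=0$. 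Independence of the auxiliary connection follows because two choices of connection give $\G(M)$-torsors related by a morphism in $\G(M)$, and a different choice of primitive $e_p$ changes the groupoid $\G_p(M)$ only by an isomorphism (the Wess-Zumino shift $\alpha\mapsto\alpha+f(A,F)-f(A^g,F_{A^g})$ noted earlier), so neither affects the existence of the lift.

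\textbf{Main obstacle.} I expect the main technical point to be the acyclicity claim used in the first assertion: one must verify carefully that $\SG(M,\ker(D\g\to C\g),1)$, as a sheaf of (non-abelian, but nilpotent) groups, has trivial first \v Cech cohomology, which requires filtering the nilpotent group by central series, identifying the subquotients with the fine sheaves $\Omega^i(M)\otimes U^{-i}$, and then running the standard non-abelian extension/obstruction argument degree by degree. Once this is in place, both the classification statement and the identification of the obstruction with ${\rm cw}(p)$ follow formally from the exact sequences and the bijections \eqref{eq:explDg}, \eqref{eq:explDpg} together with the previously proved torsor theorem.
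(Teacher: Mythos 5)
Your proposal is correct and follows essentially the same route as the paper: the classification statement comes from viewing $DG(M)$ as an extension of $G(M)$ by the acyclic sheaf of nilpotent groups $\SG(M,\ker(D\g\to C\g),1)$, and the lifting obstruction is identified with ${\rm cw}(p)$ by pushing $\G(M)$-torsors through $\mu$ and $\mu_p$ and invoking the preceding theorem on $\G_p(M)$-torsors. You merely spell out details the paper leaves implicit (the fine-sheaf/filtration argument for acyclicity, the \v Cech form of the lifting data, and independence of the auxiliary connection and of the primitive $e_p$), which is consistent with the paper's argument.
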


\section{Groups integrating current algebras $\CA(M, A)$}
\newcommand{\CG}{\mathcal{CG}}
The purpose of this Section is to construct groups integrating current algebras $\CA(M, A)$. The construction is similar to the integration methods in \cite{LMNS} and \cite{Vizman}.

\subsection{Integration of $\CA(M, A)$}
We will need the following notation: for an embedding of manifolds $f: Y \to X$, we denote
$\Omega(X, Y):= {\rm ker}(f^*: \Omega(X) \to \Omega(Y))$ (this complex is quasi-isomorphic
to the standard relative de Rham complex). Note that $\SA(X,Y,A)=(\Omega(X,Y) \otimes A)^0_{\rm closed}$
is a Lie subalgebra of $\SA(X, A)$, and $(\Omega(X,Y) \otimes A)^0_{\rm exact} \subset \SA(X,A)$
is a Lie ideal.

\begin{prop}
Let $I=[0,1]$ be the unit interval with coordinate $s$. The map $\tau: \alpha \mapsto d(s\alpha)$ induces a Lie algebra isomorphism
\begin{equation}\label{eq:paths}
\mathcal{CA}(M,A)\cong\frac{\SA(M\times I,M\times\{0\}, A)}
{\Bigl(\Omega\bigl(M\times I,M\times\{0,1\}\bigr)\otimes A\Bigr)^0_{\rm exact}}.
\end{equation}
\end{prop}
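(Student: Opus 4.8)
The plan is to verify in turn that $\tau$ is well defined on the quotients, that it is a Lie algebra homomorphism, and that it is bijective. It is convenient to write a general element of $\SA(M\times I,M\times\{0\},A)$ as $u=\beta(s)+\gamma(s)\wedge ds$ with $\beta(s)\in(\Omega(M)\otimes A)^{0}$ and $\gamma(s)\in(\Omega(M)\otimes A)^{-1}$ depending smoothly on $s\in I$; the relative condition then reads $\beta(0)=0$, and $du=0$ unwinds to $d_M\beta(s)=0$ together with $\partial_s\beta(s)=\pm d_M\gamma(s)$. Integrating in $s$ and using $\beta(0)=0$ shows that $\beta(s)=\pm d_M\!\int_0^s\gamma(s')\,ds'$ is exact in $\Omega(M)\otimes A$ for every $s$; this observation will drive the surjectivity step. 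For well-definedness: for $\alpha\in(\Omega(M)\otimes A)^{-1}$ the form $s\alpha$ vanishes on $M\times\{0\}$, so $\tau(\alpha)=d(s\alpha)=ds\wedge\alpha+s\,d\alpha$ is closed of degree $0$ and restricts to $d(i_0^*(s\alpha))=0$ on $M\times\{0\}$, hence lies in $\SA(M\times I,M\times\{0\},A)$. If $\alpha=d\gamma$ then $\tau(\alpha)=\pm d(ds\wedge\gamma)$ and $ds\wedge\gamma\in\Omega(M\times I,M\times\{0,1\})\otimes A$ because $i_0^*ds=i_1^*ds=0$; so $\tau$ descends. That the right-hand side is a quotient of Lie algebras follows from the remark preceding the statement applied with $X=M\times I$, $Y=M\times\{0,1\}$.

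For the homomorphism property: since $d(s\beta)$ is closed of degree $0$, $[\tau(\alpha),\tau(\beta)]=[d(s\alpha),d(s\beta)]=d\bigl([s\alpha,d(s\beta)]\bigr)$, and expanding $d(s\beta)=ds\wedge\beta+s\,d\beta$ yields $[s\alpha,d(s\beta)]=s^{2}[\alpha,d\beta]\pm s\,ds\wedge[\alpha,\beta]$. Since $\tau(\{\alpha,\beta\})=\tau([\alpha,d\beta])=d\bigl(s[\alpha,d\beta]\bigr)$, the difference $[\tau(\alpha),\tau(\beta)]-\tau(\{\alpha,\beta\})$ equals $d$ of $(s^{2}-s)[\alpha,d\beta]\pm s\,ds\wedge[\alpha,\beta]$. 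The crucial point is that this degree $-1$ element lies in $\Omega(M\times I,M\times\{0,1\})\otimes A$, since the scalar $s^{2}-s$ and the $1$-form $ds$ both restrict to $0$ at $s=0$ and $s=1$; hence the difference is exact in the doubly-relative complex and $\tau$ is a Lie homomorphism.

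For bijectivity: for surjectivity, given $u=\beta(s)+\gamma(s)\wedge ds$ as above, set $\alpha:=\int_0^1\gamma(s)\,ds\in(\Omega(M)\otimes A)^{-1}$ (fiber integration of the $ds$-component over $I$). Using $\beta(s)=\pm d_M\!\int_0^s\gamma$ one checks that $u-\tau(\alpha)=\pm d\bigl(\int_0^s\gamma(s')\,ds'-s\!\int_0^1\gamma(s')\,ds'\bigr)$, and the primitive inside vanishes both at $s=0$ and at $s=1$, so it lies in $\Omega(M\times I,M\times\{0,1\})\otimes A$ and $[u]=\tau([\alpha])$. For injectivity, suppose $\tau(\alpha)=d(s\alpha)=dv$ with $v=v_0(s)+v_1(s)\wedge ds\in(\Omega(M\times I,M\times\{0,1\})\otimes A)^{-1}$, so $v_0(0)=v_0(1)=0$. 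Closedness of $s\alpha-v$ forces $\alpha=\partial_s v_0(s)\mp d_M v_1(s)$; integrating over $s\in[0,1]$ kills the first term (boundary values of $v_0$ vanish) and gives $\alpha=\mp d_M\!\int_0^1 v_1(s)\,ds$, so $\alpha$ is exact and $[\alpha]=0$ in $\CA(M,A)$.

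The step I expect to be the main obstacle is the homomorphism property: the derived bracket on $\CA(M,A)$ and the ordinary bracket on $\SA(M\times I,M\times\{0\},A)$ look quite different, and the content is to recognise, after rewriting $[d(s\alpha),d(s\beta)]$ as $d[s\alpha,d(s\beta)]$ and expanding, that the discrepancy from $\tau(\{\alpha,\beta\})$ is exactly $d$ of something that vanishes at the two ends of $I$ — the appearance of the factors $s^{2}-s$ and $s\,ds$ is precisely what makes this work. Beyond that, the only real work is the sign bookkeeping in $\Omega(M\times I)\otimes A$, which I would organise throughout via the splitting $\Omega(M\times I)=\Omega(M)\oplus\bigl(\Omega(M)\wedge ds\bigr)$.
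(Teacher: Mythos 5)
Your proof is correct and follows essentially the same route as the paper: well-definedness via $\tau(d\omega)=\pm d(ds\wedge\omega)$, bijectivity via fiber integration over $I$ (which you simply spell out as explicit surjectivity and injectivity checks), and the homomorphism property via the observation that $[d(s\alpha),d(s\beta)]-\tau([\alpha,d\beta])$ is $d$ of an element vanishing at $s=0,1$ --- your $(s^2-s)[\alpha,d\beta]\pm s\,ds\wedge[\alpha,\beta]$ is exactly the paper's term $d[s\alpha,d((1-s)\beta)]$ written out. No gaps; only the sign bookkeeping you already flagged remains to be fixed in a final write-up.
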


\begin{proof}
The map $\tau$ is well-defined since $\tau(d\omega)=d(s d\omega) =- d(ds \wedge \omega)$, and 
$ds \wedge \omega$ vanishes when restricted to $M\times \{ 0\}$ and $M\times \{ 1 \}$.
To show that $\tau$ is an isomorphism of vector spaces, observe that the fiber integral
$\lambda \mapsto \int_I \lambda$ is an inverse of $\tau$. Finally, $\tau$ is a Lie homomorphism
since for $\alpha, \beta \in \bigl(\Omega(M)\otimes A\bigr)^{-1}$ we have 
$$
\tau([\alpha,d\beta])=d(s[\alpha,d\beta])=[d(s\alpha),d(s\beta)]+d[s\alpha,d((1-s)\beta)]\equiv [d(s\alpha),d(s\beta)].
$$
Here we used that $[s\alpha,d((1-s)\beta)]$ vanishes on $M\times \{ 0\}$ and $M\times \{ 1 \}$.
\end{proof}

We can reformulate the isomorphism \eqref{eq:paths} as follows. Let a \emph{path} in $\SA(M,A)$ be an element $\gamma\in\SA(M\times I, A)$. The endpoints of $\gamma$ are the elements $\gamma|_{M\times\{0\}}, \gamma|_{M\times\{1\}} \in \SA(M,A)$. Let  $\gamma_0,\gamma_1\in\SA(M\times I, A)$ be two paths with the same endpoints $\epsilon_0$ and $\epsilon_1$, i.e.\ such that 
$$\gamma_0|_{M\times\{0\}}=\gamma_1|_{M\times\{0\}}=\epsilon_0\text{ and } \gamma_0|_{M\times\{1\}}=\gamma_1|_{M\times\{1\}}=\epsilon_1.$$ 
A \emph{homotopy} between $\gamma_0$ and $\gamma_1$ is an element $\chi\in\SA(M\times I\times I)$ such that $\gamma_0=\chi|_{M\times I\times\{0\}}$, $\gamma_1=\chi|_{M\times I\times\{1\}}$, and  $\chi|_{M\times\{0\}\times I}$ is the pullback of $\epsilon_0$ and $\chi|_{M\times\{1\}\times I}$  is the pullback of $\epsilon_1$ under  projection $M\times I\to M$.

Equation \eqref{eq:paths} says that $\CA(M,A)$ is isomorphic to the Lie algebra of paths in $\SA(M,A)$ starting at $0\in\SA(M,A)$, modulo homotopy of paths.
Indeed, for  $\gamma_1=\gamma_0+d\mu$ the desired homotopy is $\chi=\gamma_0+d(t\mu)$ (here $t$ is the parameter on the unit segment).
In the other direction, if $\chi$ is a homotopy interpolating between $\gamma_0$ and $\gamma_1$, we have
$$
\gamma_1-\gamma_0=d\int_t\chi\in  \Bigl(\Omega\bigl(M\times I,M\times\{0,1\}\bigr)\otimes A\Bigr)^0_{\rm exact}
$$
as required.

In the same way, we can introduce paths and their homotopies in $\SG(M,A,G)$. The group $\CG(M,A,G)$ is then defined as the group of paths in $\SG(M,A)$ 
starting at the group unit, modulo homotopy of paths. Again, it depends on the choice of 
a connected Lie group $G$ integrating the Lie algebra $\g=A^0_{\rm closed}$.

\begin{rem}
Let $(A,G) \to (B,H)$ be a morphism of pairs consisting of a DGLA homomorphism
$A\to B$ and a group homomorphism $G\to H$ integrating the Lie homomorphism
$\g=A^0_{\rm closed} \to B^0_{\rm closed} =\mathfrak{h}$. Then, similarly to the $\SG$ functor, we obtain a group homomorphism $\CG(M,A,G) \to \CG(M,B,G)$.
For example, consider the DGLA homomorphism $D_p\g \to A_{FMS}$.
Note that $({A_{FMS}})^0_\text{closed} \cong \g \ltimes W\g^{2n-2}_\text{closed}$. If $G$ is a connected
Lie group integrating $\g$, the Lie algebra $A^0_\text{FMS}$ integrates to
the semi-direct product $H=G\ltimes W\g^{2n-2}_\text{closed}$. Obviously, we have a
morphism of pairs $(D_p\g, G) \to (A_{FMS}, H)$. It induces
a group homomorphism 
$\CG(M,D_p\g,G) \to \CG(M, A_{FMS}, H)$ from $\CG(M, D_p\g, G)$ to the group  
$CG(M, A_{FMS}, H)$ integrating 
Faddeev-Mickelsson-Shatashvili (FMS) current algebra (for details about the group integrating the FMS current algebra see the book \cite{mibook}). The image of this map is
the group $\CG(M, B_{FMS}, G)$ integrating the truncated FMS current algebra
(here $B_{FMS}={\rm im}(D_p\g \to A_{FMS})$ and ${\rm im}(G \to H)=G$).
\end{rem}

\subsection{The case of $A=D_p\g$ and $M$ a sphere}
In this Section we restrict our attention to examples of $A=C\g, D\g, D_p\g$  
and $M=S^{n}$ a sphere. Let us introduce the shorthand notation $\widetilde G(M)=\CG(M,C\g,G)$, $\widetilde{DG}(M)=\CG(M,D\g,G)$, and $\widetilde{D_pG}(M)=\CG(M,D_p\g,G)$.

\begin{prop} \label{pi}
Let $G$ be a simply connected Lie group.
Then, there is an  exact sequence of groups,
$$1\to\pi_{n+1}(G)\to\widetilde G(S^n)\to G(S^n)\to\pi_n(G)\to 1.$$
\end{prop}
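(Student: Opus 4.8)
The plan is to reduce the statement to standard facts about the topology of mapping spaces. By the Remark in Section 6.1 we have $\SG(M,C\g,G)\cong G(M)=C^\infty(M,G)$, so a path in $\SG(S^n,C\g,G)$ is simply a smooth map $S^n\times I\to G$, and $\widetilde G(S^n)=\CG(S^n,C\g,G)$ is the set of homotopy classes, rel endpoints, of smooth paths in the topological group $K:=C^\infty(S^n,G)$ that start at the constant map $e$, with group structure inherited from $K$. The first step is to recognize this group as the universal covering group $\widetilde{K_0}$ of the identity component $K_0\subseteq K$: for any topological group $K$, the paths issuing from the unit, taken modulo homotopy rel endpoints, form a group mapping onto $K_0$ with kernel $\pi_1(K,e)$, and this group is exactly $\widetilde{K_0}$. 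Here the difference between smooth and continuous paths and homotopies is immaterial, since for the finite-dimensional Lie group $G$ and the compact manifold $S^n$ the inclusion $C^\infty(S^n\times I,G)\hookrightarrow C^0(S^n\times I,G)$ is a homotopy equivalence; in particular $K_0$ is both the smooth and the topological identity component, i.e.\ the set of (smoothly) null-homotopic maps.

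Next I would splice the two exact sequences
$$1\to\pi_1(K,e)\to\widetilde{K_0}\to K_0\to1\qquad\text{and}\qquad1\to K_0\to K\to\pi_0(K)\to1$$
into a single four-term sequence
$$1\to\pi_1(K)\to\widetilde{K_0}\to K\to\pi_0(K)\to1,$$
in which the middle arrow is the composite $\widetilde{K_0}\to K_0\hookrightarrow K$, that is, evaluation of a path at $s=1$, and $K\to\pi_0(K)$ is the free-homotopy-class map. Exactness of this sequence is precisely the content of the covering and quotient sequences above, and all four maps are group homomorphisms: the covering map by construction, $K\to\pi_0(K)$ because the set of components of a topological group is a group, and the group law on $\pi_0(K)=[S^n,G]$ coincides, by Eckmann--Hilton, with the one induced by pointwise multiplication in $G$.

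It then remains to identify $\pi_0(K)$ with $\pi_n(G)$ and $\pi_1(K)$ with $\pi_{n+1}(G)$. Since $G$ is connected, free homotopy classes of maps $S^n\to G$ coincide with $\pi_n(G)$ (the action of $\pi_1(G)$ on $\pi_n(G)$ is trivial because $G$ is a group, and in any case $\pi_1(G)=0$), and every class is represented by a smooth map; this gives $\pi_0(K)\cong\pi_n(G)$ together with surjectivity at the right-hand end. For $\pi_1$, I would invoke the evaluation fibration $\mathrm{ev}\colon C^\infty(S^n,G)\to G$, $f\mapsto f(x_0)$, whose fibre over $e$ is the based mapping space $C^\infty_*(S^n,G)\simeq\Omega^nG$; the constant maps provide a section, so the section splits the homotopy exact sequence and $\pi_1(K)$ is a split extension of $\pi_1(G)$ by $\pi_1(\Omega^nG)=\pi_{n+1}(G)$. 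As $G$ is simply connected this gives $\pi_1(K)\cong\pi_{n+1}(G)$. Substituting the two identifications into the four-term sequence yields exactly $1\to\pi_{n+1}(G)\to\widetilde G(S^n)\to G(S^n)\to\pi_n(G)\to1$.

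The step I expect to cause the most trouble is the first one: carefully justifying that $\CG(S^n,C\g,G)$, built from the path/homotopy construction of Section 7.1, really is the universal covering group of the identity component of $C^\infty(S^n,G)$, and in particular that passing between smooth and continuous maps changes nothing. Once that identification and the (standard) computation of $\pi_0$ and $\pi_1$ of $\mathrm{Map}(S^n,G)$ via the evaluation fibration are in hand, the remainder is bookkeeping with exact sequences.
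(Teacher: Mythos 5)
Your argument is correct and is essentially the paper's own proof, just with the details filled in: the paper likewise identifies $\widetilde G(S^n)$ with the group of paths in $G(S^n)$ starting at the unit modulo homotopy (i.e.\ the universal cover of the identity component) and then quotes $\pi_0(G(S^n))\cong\pi_n(G)$ and, using simple connectivity, $\pi_1(G(S^n))\cong\pi_{n+1}(G)$ to obtain the four-term sequence. Your extra care with smooth versus continuous maps and the evaluation-fibration computation of $\pi_0$ and $\pi_1$ of the mapping space simply makes explicit what the paper leaves as standard facts.
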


\begin{proof}
The group $\widetilde G(S^n)$ consists of paths $g_t$  in the group $G(S^n)$ which start at the group unit ($g_0=1$) modulo homotopy. 
Note that $\pi_0(G(S^n)) \cong \pi_n(G)$ and,  if $G$ is simply connected, $\pi_1(G(S^n)) \cong \pi_{n+1}(G)$.
This implies the exact sequence in the Proposition.
\end{proof}

\begin{prop}
Let $G$ be a simply connected Lie group.
Then, there is an  exact sequence of groups,
$$1\to\pi_{n+1}(G)\to\widetilde{DG}(S^n)\to DG(S^n)\to\pi_n(G)\to 1.$$
\end{prop}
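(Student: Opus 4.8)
The plan is to imitate the proof of Proposition \ref{pi} almost verbatim, reducing the statement to a computation of $\pi_0$ and $\pi_1$ of the topological group $DG(S^n)=\SG(S^n,D\g,G)$. By construction $\widetilde{DG}(S^n)=\CG(S^n,D\g,G)$ is the group of paths in $DG(S^n)$ starting at the group unit, taken modulo homotopy rel endpoints, with pointwise multiplication. The endpoint map $\widetilde{DG}(S^n)\to DG(S^n)$ is a group homomorphism whose image is the path component of the unit and whose kernel is the group of based loops modulo homotopy rel basepoint, i.e.\ $\pi_1(DG(S^n))$. Since $DG(S^n)$ is a topological group, $\pi_0(DG(S^n))$ is a group and equals $DG(S^n)$ modulo the path component of the unit. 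This gives the four-term exact sequence
$$1\to\pi_1(DG(S^n))\to\widetilde{DG}(S^n)\to DG(S^n)\to\pi_0(DG(S^n))\to 1,$$
so the task is to identify $\pi_0(DG(S^n))\cong\pi_n(G)$ and $\pi_1(DG(S^n))\cong\pi_{n+1}(G)$.

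For this I would invoke the explicit description \eqref{eq:explDg}, which exhibits $DG(M)$ as a product $G(M)\times\bigl(\bigoplus_{i>0}\Omega^i(M)\otimes U^{-i}\bigr)$, where the second factor is a topological vector space (with its natural $C^\infty$ topology), hence contractible. The bijection \eqref{eq:explDg} is built from the section $g\mapsto\mu(m_{g,0})$ of $DG(M)\to G(M)$ and the exponential map $\nu$ of the nilpotent Lie algebra $\SA\bigl(M,\ker(D\g\to C\g)\bigr)$, so it is compatible with the projections to $G(M)$; thus $DG(S^n)\to G(S^n)$ is (a trivial bundle and hence) a fibration with contractible fibre. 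Consequently $\pi_k(DG(S^n))\cong\pi_k(G(S^n))$ for all $k$, in particular for $k=0,1$. Combining with the identifications $\pi_0(G(S^n))\cong\pi_n(G)$ and, for $G$ simply connected, $\pi_1(G(S^n))\cong\pi_{n+1}(G)$ already recorded in the proof of Proposition \ref{pi}, the desired exact sequence follows.

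The main obstacle is the topological bookkeeping: one must check that \eqref{eq:explDg} is a homeomorphism for the natural topologies — equivalently, that $\mu(m_{\cdot,0})$ and $\nu$ together with their inverse are continuous — so that this product decomposition legitimately computes homotopy groups. This is routine, since all the maps in question are given by explicit polynomial and exponential formulas depending on finitely many derivatives of the data on the compact manifold $S^n$; but it is the only place where anything beyond the algebra of the earlier sections is needed. Everything else is a word-for-word repetition of the argument establishing the sequence for $\widetilde G(S^n)$ in Proposition \ref{pi}.
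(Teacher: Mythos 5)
Your argument is correct and is in substance the paper's own: both proofs hinge on the decomposition \eqref{eq:explDg}, the contractibility of the nilpotent factor $\bigoplus_{i>0}\Omega^i(S^n)\otimes U^{-i}$, and a reduction to Proposition \ref{pi}. The only real difference is packaging, and it is worth a word of care: $\widetilde{DG}(S^n)=\CG(S^n,D\g,G)$ is \emph{defined} via elements of $\SG(S^n\times I,D\g,G)$ and homotopies via $\SG(S^n\times I\times I,D\g,G)$, not via continuous paths in a topologized group $DG(S^n)$; such an element carries extra data (the $ds$-components of the forms), so your opening identification of the kernel and cokernel of the endpoint map with the topological $\pi_1(DG(S^n))$ and $\pi_0(DG(S^n))$ is not automatic from the definitions. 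The clean fix is not so much checking that \eqref{eq:explDg} is a homeomorphism for $M=S^n$ as applying \eqref{eq:explDg} functorially to $M=S^n\times I$ and $S^n\times I\times I$: this splits every path and every homotopy into a path (resp.\ homotopy) in the $\widetilde G$-sense times an element of a contractible vector space, giving the bijection $\widetilde{DG}(S^n)\cong\widetilde G(S^n)\times\bigl(\bigoplus_{i>0}\Omega^i(S^n)\otimes U^{-i}\bigr)$, after which Proposition \ref{pi} finishes the proof — which is exactly the paper's one-paragraph argument. Your fibration formulation buys the same conclusion once this point is made, so the two routes are essentially equivalent; the paper's version simply never needs to topologize the infinite-dimensional groups.
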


\begin{proof}
The group $\widetilde {DG}(S^n)$ can be described using the bijection \eqref{eq:explDg}. Let us define paths and their homotopies in  
$\bigoplus_{i>0}\Omega^i(M)\otimes U^{-i}$ in the same way as above. In this sense, $\bigoplus_{i>0}\Omega^i(M)\otimes U^{-i}$ is 1-connected.
We infer from \eqref{eq:explDg} that there is a bijection
$$\widetilde{DG}(M)\to \widetilde G(M)\times \left( \bigoplus_{i>0}\Omega^i(M)\otimes U^{-i}\right) .$$
Together with the exact sequence of the Proposition \ref{pi}, it implies the required exact sequence.
\end{proof}

For $p\in (S^n\g^*)^\g$, let $\eta_p =e_p(\theta,0)\in \Omega^{2n-1}(G)$ be the bi-invariant differential form on $G$
defined by transgression, and $\Pi: \pi_{2n-1}(G) \to \mathbb{R}$  be the group homomorphism defined
by the integration map: $C \mapsto \int_C \eta_p$. The image of $\Pi$ is a subgroup of $\mathbb{R}$.
We will be interested in the quotient $\mathbb{R}/{\rm im}(\Pi)$.
If $p \in \left( (S^+\g)^\g \right)^2$, then $\eta_p=0$ and the quotient is equal to $\mathbb{R}$.
If $\g$ is a Lie algebra of a compact simple Lie group, and 
$p$ is a generator of $(S\g)^\g$ of degree $m_i+1$ (here $m_i$
is one of the exponents of $\g$), and the multiplicity of $m_i$ is equal to one (this is always the case with 
the exception of one of the exponents of the group $SO(2n)$), then $\mathbb{R}/{\rm im}(\Pi) \cong S^1$.

\begin{thm} \label{thm:extensionS^1}
Let $p\in (S^n\g^*)^\g$. Then, there is an exact sequence of groups
$$
1 \to \mathbb{R}/{\rm im}(\Pi) \to \widetilde{D_pG}(S^{2n-3})\to\widetilde{DG}(S^{2n-3}) \to 1,
$$
where $\mathbb{R}/{\rm im}(\Pi)$ is a central subgroup.
\end{thm}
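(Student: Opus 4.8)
The plan is to derive the sequence from the short exact sequence of DGLAs $0 \to \R[2n-2] \to D_p\g \to D\g \to 0$ by applying the $\CG$-construction, and then to compute the resulting kernel concretely using the sphere $M = S^{2n-3}$ and the homotopy-theoretic description of $\widetilde{DG}$. At the Lie algebra level, since $D\g$ is acyclic, Proposition \ref{prop:exact} (and its degenerate form for acyclic quotients, applied with the roles of sub and quotient matched appropriately) yields a short exact sequence $0 \to \CA(S^{2n-3}, \R[2n-2]) \to \CA(S^{2n-3}, D_p\g) \to \CA(S^{2n-3}, D\g) \to 0$; here $\CA(S^{2n-3}, \R[2n-2]) = \Omega^{2n-3}(S^{2n-3})/\Omega^{2n-3}_{\rm exact}(S^{2n-3}) \cong \R$ via integration, so the Lie algebra $\CA(S^{2n-3}, D_p\g)$ is a central $\R$-extension of $\CA(S^{2n-3}, D\g)$. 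The content of the theorem is that after integration this central $\R$ gets quotiented to $\R/{\rm im}(\Pi)$.

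First I would set up paths in $\SG$: recall from the previous subsection that $\widetilde{D_pG}(S^{2n-3})$ is the group of paths in $\SG(S^{2n-3}, D_p\g, G)$ starting at the unit, modulo homotopy. Using the bijection \eqref{eq:explDpg}, a path in $D_pG(S^{2n-3}) = \SG(S^{2n-3}, D_p\g, G)$ decomposes as a path in ${}_pG(S^{2n-3})$ — that is, a path of pairs $(g_t: S^{2n-3}\to G,\ \alpha_t \in \Omega^{2n-3}(S^{2n-3}))$ with $g_t^*\eta_p + d\alpha_t = 0$ — together with a path in the $1$-connected nilpotent factor $\bigoplus_{i>0}\Omega^i \otimes U^{-i}$. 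Since the nilpotent factor is $1$-connected (as noted in the proof of the previous proposition), it contributes nothing to the $\pi_0$/$\pi_1$ bookkeeping, and $\widetilde{D_pG}(S^{2n-3})$ fibers over $\widetilde{DG}(S^{2n-3})$ with kernel the group of homotopy classes of paths $t \mapsto \exp(\beta_t \otimes c)$, $\beta_t \in \Omega^{2n-3}_{\rm closed}(S^{2n-3})$, starting at $\beta_0 = 0$; equivalently, the kernel is the group of paths in the central $\R$ (identified via $\int_{S^{2n-3}}$) modulo homotopy. The map $\widetilde{D_pG}(S^{2n-3}) \to \widetilde{DG}(S^{2n-3})$ is surjective because a path $g_t$ downstairs, being a loop-free path in $DG(S^{2n-3})$ starting at the unit, can be lifted: the obstruction to lifting $g_t$ step-by-step is exactly that $g_t^*\eta_p$ be exact on $S^{2n-3}$, and since $g_0 = 1$ and $H^{2n-1}(S^{2n-3}) = 0$ for the relevant degree count, each $g_t^*\eta_p \in \Omega^{2n-1}(S^{2n-3})$ is closed of top-or-lower degree and hence exact (indeed $2n-1 > 2n-3$ so $\Omega^{2n-1}(S^{2n-3}) = 0$, making the obstruction vacuous).

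The heart of the argument — and the step I expect to be the main obstacle — is computing the kernel precisely as $\R/{\rm im}(\Pi)$ rather than $\R$. A closed path $t\mapsto\exp(\beta_t\otimes c)$ with $\beta_0 = 0$ that lies over the trivial path in $\widetilde{DG}$ is classified, up to homotopy, by a single real number obtained by a "total flux" integral. The subtlety is that such a path need not have $\beta_1 = 0$: it is homotopic to the trivial path only relative to its endpoints, and the endpoint in $D_pG(S^{2n-3})$ is constrained. Concretely, a null-homotopic loop $g_t$ in $G(S^{2n-3})$ — i.e. an element of $\pi_1(G(S^{2n-3})) \cong \pi_{2n-1}(G)$ by the adjunction used in Proposition \ref{pi} — when lifted to a path of pairs $(g_t, \alpha_t)$, produces a nonzero change in the $c$-component equal to $\int_{S^{2n-3}\times S^1} (g\cdot)^*\eta_p = \int_{S^{2n-1}} \widetilde{g}^*\eta_p = \Pi([\widetilde g])$, where $\widetilde g: S^{2n-3}\times S^1 \to G$ extends (by $g_0 = g_1 = 1$) to $S^{2n-1} \to G$ representing the class in $\pi_{2n-1}(G)$. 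Thus contractible loops in $\widetilde G(S^{2n-3})$ act on the central $\R$-fibre by translation through exactly ${\rm im}(\Pi)$, and quotienting the naive $\R$ by this ambiguity gives the central subgroup $\R/{\rm im}(\Pi)$. Centrality follows because $c$ is central in $D_p\g$, hence $\exp(\beta\otimes c)$ is central in $\overline{\SG}(S^{2n-3}, D_p\g, G)$, and this is preserved under the path construction. Assembling these observations yields the stated exact sequence.
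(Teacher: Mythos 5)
Your overall skeleton matches the paper's proof: you use the decomposition \eqref{eq:explDpg} to split off the $1$-connected nilpotent factor, you get surjectivity from the dimension count ($\Omega^{2n-1}$ vanishes on the relevant spaces, so the constraint involving $\eta_p$ is vacuous), and centrality is clear since $c$ is central. However, the step you yourself flag as the heart of the argument --- identifying the kernel as $\R/\mathrm{im}(\Pi)$ rather than $\R$ --- is carried out by a mechanism that does not work. You attribute the $\mathrm{im}(\Pi)$-ambiguity to null-homotopic \emph{loops} $g_t$ in $G(S^{2n-3})$, i.e.\ to $\pi_1(G(S^{2n-3}))$, which you identify with $\pi_{2n-1}(G)$. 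By the adjunction used in Proposition \ref{pi}, $\pi_1(G(S^{2n-3}))\cong\pi_{2n-2}(G)$, not $\pi_{2n-1}(G)$; moreover your flux formula $\int_{S^{2n-3}\times S^1}\widetilde g^{\,*}\eta_p$ integrates a $(2n-1)$-form over a $(2n-2)$-dimensional manifold and is therefore identically zero, and collapsing the ends of $S^{2n-3}\times S^1$ produces $S^{2n-2}$, not $S^{2n-1}$, so no class in $\pi_{2n-1}(G)$ arises from a loop. As a sanity check, for $n=2$ your mechanism would index the periods by $\pi_1(LG)\cong\pi_2(G)=0$ and predict a central $\R$ instead of a circle. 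A related slip: a path is by definition an element over $M\times I$, so the form component of a kernel representative is a top-degree form $\alpha\in\Omega^{2n-2}(S^{2n-3}\times I)$ whose restrictions to both ends lie in $\Omega^{2n-2}(S^{2n-3})=0$; hence every kernel path is automatically a loop, its invariant being the total flux $\int_{S^{2n-3}\times I}\alpha\in\R$, and the issue of ``$\beta_1\neq 0$'' does not arise.

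The periods in fact come from \emph{two-parameter} data, i.e.\ from homotopies of paths rather than from loops. A homotopy between two kernel representatives $(1,\alpha_0)$ and $(1,\alpha_1)$ is a pair $(h,\beta)$ with $h:S^{2n-3}\times I\times I\to G$ equal to $1$ on $S^{2n-3}\times\partial(I\times I)$ and $\beta\in\Omega^{2n-2}(S^{2n-3}\times I\times I)$ satisfying $d\beta+h^*\eta_p=0$ together with the boundary conditions $\beta|_{S^{2n-3}\times I\times\{0\}}=\alpha_0$, $\beta|_{S^{2n-3}\times I\times\{1\}}=\alpha_1$, $\beta|_{S^{2n-3}\times\{0,1\}\times I}=0$. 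Stokes' theorem then gives $\int_{S^{2n-3}\times I}(\alpha_1-\alpha_0)=-\int_{S^{2n-3}\times I\times I}h^*\eta_p$; since $h$ factors through $S^{2n-3}\times S^2$ with $S^{2n-3}\times\{\mathrm{pt}\}$ mapped to $1$ (a space homotopy equivalent to $S^2\vee S^{2n-1}$) and $\eta_p$ has degree $2n-1$, this integral is a value of $\Pi$ on $\pi_{2n-1}(G)$. Conversely, any prescribed value $\Pi(a)$ is realized by choosing $h$ representing $-a$ and solving $d\beta=-h^*\eta_p$ with the stated boundary values. In other words, the relevant homotopy-theoretic input is (roughly) $\pi_2$ of the mapping group, $\pi_2(G(S^{2n-3}))\cong\pi_{2n-1}(G)$, not $\pi_1$; with this replacement your argument becomes the paper's proof, but as written the key identification of the ambiguity subgroup with $\mathrm{im}(\Pi)$ is not established.
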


\begin{proof}
The bijection \eqref{eq:explDpg} implies the bijection
$$
\widetilde{D_pG}(M)\to \widetilde{{}_pG}(M)\times \left( \bigoplus_{i>0}\Omega^i(M)\otimes U^{-i}\right) ,
$$
where $\widetilde{{}_pG}(M)$ stands (as usual) for the set of  paths in ${}_pG(M)$ starting at the group unit
modulo homotopy. 

For $M=S^{2n-3}$,  $\Omega^{2n-2}(S^{2n-3})=0$ and therefore  ${}_pG(S^{2n-3})=G(S^{2n-3})$.
A path in ${}_pG(S^{2n-3})$ is a pair $(g_t,\alpha)$ where $g_t:S^{2n-3}\times I\to G$ and $\alpha\in\Omega^{2n-2}(S^{2n-3}\times I)$. 
Since $\alpha$ is a top degree form, there are no conditions imposed on it and
the group homomorphism $\widetilde{D_pG}(S^{2n-3})\to\widetilde{DG}(S^{2n-3})$ is surjective.

Let us determine the kernel of the group homomorphism
$\widetilde{D_pG}(S^{2n-3})\to\widetilde{DG}(S^{2n-3})$, or equivalently, the kernel of the map $\widetilde{{}_pG}(S^{2n-3})\to\widetilde{G}(S^{2n-3})$. It consists of homotopy classes of paths in ${}_pG(S^{2n-3})$ of the form
$(1,\alpha)$, where $\alpha\in\Omega^{2n-2}(S^{2n-3}\times I)$  and $1$ denotes the constant map to $1\in G$. A homotopy between two such paths $(1,\alpha_0)$, $(1,\alpha_1)$ is a pair $(h,\beta)$, where $h$ is a map $h:S^{2n-3}\times I\times I\to G$ such that $h|_{S^{2n-3}\times\partial (I\times I)}=1$ and $\beta\in\Omega^{2n-2}(S^{2n-3}\times I\times I)$ is such that $d\beta+h^*\eta_p=0$, and $\beta|_{S^{2n-3}\times\{0,1\}\times I}=0$, $\beta|_{S^{2n-3}\times I\times\{0\}}=\alpha_0$, $\beta|_{S^{2n-3}\times I\times\{1\}}=\alpha_1$. The Stokes theorem implies
$$
\int_{S^{2n-3}\times I}(\alpha_1-\alpha_0)=- \int_{S^{2n-3}\times I\times I}h^*\eta_p\in\text{im}(\Pi).
$$

I the other direction, let $(1,\alpha_0)$ and $(1,\alpha_1)$ be paths in ${}_pG(S^{2n-3})$ such that $\int_{S^{2n-3}\times I}(\alpha_1-\alpha_0)=\Pi(a)$ for some $a\in\pi_{2n-1}(G)$. Choose a smooth map $h:S^{2n-3}\times I\times I\to G$, $h|_{S^{2n-3}\times\partial (I\times I)}=1$ representing the class $(-a)$. Then, there exists a differential form
$\beta\in\Omega^{2n-2}(S^{2n-3}\times I\times I)$ such that $(h,\beta)$ is a homotopy between the paths $(1,\alpha_0)$ and $(1,\alpha_1)$.

As a result, two paths $(1,\alpha_0)$ and $(1,\alpha_1)$ are homotopic if and only if 
$$\int_{S^{2n-3}\times I}(\alpha_1-\alpha_0)\in\text{im}(\Pi).$$ The kernel of 
the map $\widetilde{D_pG}(S^{2n-3})\to\widetilde{DG}(S^{2n-3})$
is therefore isomorphic to $\mathbb{R}/\text{im}(\Pi)$.

\end{proof}

\subsection{Example: central extensions of loop groups}

Let $\g$ be a Lie algebra of a simple simply connected compact Lie group $G$,
and let $p\in (S^2\g)^\g$ be a nonvanishing element (note that $(S^2\g)^\g \cong \R)$).
In this case, $\R/{\rm im}(\Pi) \cong S^1$. 

Taking $M=S^1$ in the discussion of the previous section, we observe the following. 
Since $\pi_1(G)=\pi_2(G)=0$, we have $\tilde{G}(S^1)\cong G(S^1)$. In this very special case,
we have $\CA(S^1, D\g)=\CA(S^1, C\g)$, and $\widetilde{DG}(S^1)\cong G(S^1)$. 
Finally, for $\widetilde{D_pG}(S^1)$ we obtain an exact sequence of groups
$$
1 \to S^1 \to \widetilde{D_pG}(S^1) \to G(S^1) \to 1.
$$ 

\begin{prop}
The group $\widetilde{D_pG}(S^1)$ is the standard central extension of the 
loop group $LG=G(S^1)$.
\end{prop}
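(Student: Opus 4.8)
The starting point is the central extension
$$1 \to S^1 \to \widetilde{D_pG}(S^1) \to G(S^1) \to 1$$
already obtained above (the case $n=2$ of Theorem~\ref{thm:extensionS^1}, together with $\widetilde{DG}(S^1)\cong G(S^1)=LG$). The plan is to show that this is the basic central extension $\widehat{LG}$ by identifying the two pieces of data that determine such an extension --- its associated Lie algebra $2$-cocycle, and the lattice by which the real center is divided --- and then to conclude via the standard classification of circle central extensions of loop groups.

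First I would compute the Lie algebra of $\widetilde{D_pG}(S^1)$. Since $\widetilde{D_pG}(S^1)=\CG(S^1,D_p\g,G)$ integrates $\CA(S^1,D_p\g)$, the computation of Section~5 in the case $A=D_p\g$ with $n=2$ and $M=S^1$ gives $\CA(S^1,D_p\g)\cong L\g\oplus\mathbb{R}c$ with bracket
$$\{f\otimes x,\,g\otimes y\}=fg\otimes[x,y]-2p(x,y)\Bigl(\int_{S^1}f\,dg\Bigr)c,$$
that is, the affine Kac--Moody algebra $\widehat{L\g}$ attached to the invariant form $p$; in particular the Lie algebra $2$-cocycle is the usual Kac--Moody cocycle determined by $p$.

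Next I would pin down the level. By Theorem~\ref{thm:extensionS^1} the kernel $S^1$ is $\mathbb{R}/{\rm im}(\Pi)$, where $\Pi: \pi_3(G)\to\mathbb{R}$ is integration of the transgression form $\eta_p\in\Omega^3(G)$ and $\pi_3(G)\cong\mathbb{Z}$ since $G$ is compact, simple and simply connected. The hypothesis that $p$ is the generator of $(S^2\g^*)^\g$ normalized so that $\mathbb{R}/{\rm im}(\Pi)\cong S^1$ says exactly that $\eta_p$ represents a generator of $H^3(G;\mathbb{Z})\cong\mathbb{Z}$, i.e.\ that $p$ is (up to sign) the basic inner product, equivalently that the cocycle above is the basic one, of level $1$. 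Now $\g$ simple implies that $L\g$ is perfect, and $\pi_0(LG)=\pi_1(G)=0$, so ${\rm Hom}(LG,S^1)=0$; hence a circle central extension of $LG$ is determined up to isomorphism by its Lie algebra cocycle together with the requirement that $S^1$ be ``full'', which is precisely the content of the Pressley--Segal classification of such extensions by $H^3(G;\mathbb{Z})$. Since $\widetilde{D_pG}(S^1)$ has the basic cocycle and kernel the full circle $\mathbb{R}/\mathbb{Z}$, it must be the standard central extension $\widehat{LG}$. One can also observe that the description of this kernel in the proof of Theorem~\ref{thm:extensionS^1}, via maps $h: S^1\times I\times I\to G$ with $d\beta+h^*\eta_p=0$, is exactly the Wess--Zumino construction of $\widehat{LG}$.

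The main obstacle is the normalization bookkeeping in the last step: one must make precise that the element $p$ for which $\mathbb{R}/{\rm im}(\Pi)\cong S^1$ is the same element (up to sign) that yields the level-one Kac--Moody cocycle, i.e.\ that under transgression the integral lattice in $H^3(G;\mathbb{Z})$ corresponds to the ``basic'' normalization of $(S^2\g^*)^\g$. This is a standard fact (Bott; Pressley--Segal), but it is the only genuine input beyond unwinding the definitions. The remaining points are routine: $\widetilde{D_pG}(S^1)$ is connected, being an extension of the connected group $LG$ by the connected group $S^1$, and $\mathbb{R}/{\rm im}(\Pi)$ sits inside it as a central subgroup, both immediate from Theorem~\ref{thm:extensionS^1}.
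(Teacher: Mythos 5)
Your route is genuinely different from the paper's. The paper never appeals to a classification theorem: it takes as the definition of the ``standard'' extension the Mickelsson-type model built from the disc group $\widehat{G}(D^2)$ (group law twisted by $\int_{D^2}p(g_1^{-1}dg_1,dg_2g_2^{-1})$, then a quotient by homotopies weighted by $\exp\int h^*\eta_p$), rewrites that model as $C_pG(D^2)$ modulo homotopy relative to $\partial D^2$, and finally replaces discs by paths in $G(S^1)$ to see that it coincides, construction by construction, with the path group $\mathcal{CG}(S^1,C_p\g,G)=\widetilde{D_pG}(S^1)$. You instead compute the Lie algebra cocycle of $\CA(S^1,D_p\g)$ (correctly, and in agreement with the paper's Section 5 computation), note that the kernel is a full circle, and recognize the extension abstractly via the Pressley--Segal classification of circle extensions of $LG$. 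Your approach is shorter once that classification is granted; the paper's is self-contained and needs no smooth structure on $\widetilde{D_pG}(S^1)$, no integrality bookkeeping, and no uniqueness theorem.

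Two points in your argument need repair. First, the normalization step is wrong as stated: $\mathbb{R}/\mathrm{im}(\Pi)\cong S^1$ holds for \emph{every} nonzero $p$, since $\mathrm{im}(\Pi)$ is always a nontrivial cyclic subgroup of $\mathbb{R}$; this condition does not force $\eta_p$ to generate $H^3(G;\mathbb{Z})$ nor the cocycle to be of level one. You must either normalize $\mathrm{im}(\Pi)=\mathbb{Z}$ (as the paper does at the start of its proof) or observe that rescaling $p$ rescales the cocycle and the period lattice simultaneously, so the level measured against the kernel circle is independent of the choice; and the matching of the factor $-2p(x,y)\int f\,dg$ against the transgression periods is exactly the bookkeeping you defer, which must actually be done. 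Second, to invoke the classification you need $\widetilde{D_pG}(S^1)\to LG$ to be a central extension in the smooth, locally trivial sense, with Lie algebra cocycle the one computed in $\CA(S^1,D_p\g)$; the paths-modulo-homotopy construction does not hand you this structure for free, and the paper's direct identification with the disc-group model is precisely how it avoids having to produce it. Both gaps are fillable, but they are the real content of your argument and should not be waved through.
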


\begin{proof}
Let us recall the construction of the standard central extension $\widetilde{LG}$ of the loop group $LG$. Normalize $p\in (S^2\g^*)^\g$ (which is unique up to multiple) by the condition $\text{im}(\Pi)=\mathbb{Z}$, and by requiring $p$ to be positive-definite. Denote by 
$D^2$ the unit 2-dimensional disc. First, one introduces a central extension $\widehat{G}(D^2)$ of the group $G(D^2)$ by $U(1)$. As a set, $\widehat{G}(D^2)$ 
is the direct product $G(D^2)\times U(1)$, and the group law is given by formula
$$
(g_1,u_1)(g_2,u_2)=\Bigl(g_1g_2,u_1u_2\exp\bigl(\pi i\int_{D^2}
p\bigl(g_1^{-1}dg_1,dg_2g_2^{-1}\bigr)\Bigr).
$$
Note that the integrand is (up to a factor of $2\pi i$)  the pull back of the 2-form $\rho_p$ under
the map $(g_1,g_2): D^2 \to G\times G$.

Next, one introduces an equivalence relation on $\widehat{G}(D^2)$: $(g_1,u_1)\sim (g_2,u_2)$ if there exists a homotopy $h:D^2\times I\to G$ between $g_1$ and $g_2$ relative to the boundary $S^1$ of $D^2$, such that $u_2=u_1\exp\int_{D^2\times I} h^*\eta_p$. The group $\widetilde{LG}$ is then defined as the quotient of the group $\widehat{G}(D^2)$ by the equivalence relation $\sim$.

Equivalently, $\widetilde{LG}$ is quotient of the group $C_pG(D^2)=\SG(D^2,C_p\g,G)$ by the following equivalence relation: $(g_1,\omega_1)$ and $(g_2,\omega_2)$ are equivalent if there exists a homotopy $(h,\chi)\in C_pG(D^2\times I)$ between $(g_1,\omega_1)$ and $(g_2,\omega_2)$ relative to $\partial D^2$. Indeed, the map $C_pG(D^2)\to \widehat{G}(D^2)G$
given by  $(g,\alpha)\mapsto(g,\exp2\pi i\int_{D^2}\alpha)$ is a surjective group homomorphism, and two elements of $C_pG(D^2)$ are equivalent if and only if their images are equivalent.

Let us introduce a modification $\widehat{G}'(D^2)$ of $\widehat{G}(D^2)$: in its definition we replace the group $G(D^2)$ by the subgroup of $G(S^1\times I)$ of maps $g:S^1\times I\to G$ such that $g|_{S^1\times\{0\}}=1$, i.e.\ by the group of paths in $G(S^1)=LG$ starting at $1\in G(S^1)$. We also replace the homotopies $h$ by homotopies of paths. Then, it is easy to see that $\widehat{G}'(D^2)$ modulo the equivalence is again equal to $\widetilde{LG}$. Equivalently, it is the group of paths in $C_pG(S^1)$ starting at 1 modulo homotopy, i.e.\ the group $\CG(S^1, C_p\g, G)$. Hence, $\widetilde{LG}\cong \CG(S^1, C_p\g, G)$.
\end{proof}


\begin{thebibliography}{99}

\bibitem{AM}
A. Alekseev, E. Meinrenken,
The non-commutative Weil algebra,  Invent. Math. {\bf 139} (2000), no. 1, 135--172.

\bibitem{Cartan}
H. Cartan,
Notions d'alg\`ebre diff\'erentielle; application aux groupes de Lie et aux vari\'et\'es o\`u op\`ere un groupe de Lie (French).
Colloque de topologie (espaces fibr\'es), Bruxelles, 1950, pp. 15--27.

H. Cartan,
La transgression dans un groupe de Lie et dans un espace fibr\'e principal (French),
Colloque de topologie (espaces fibr\'es), Bruxelles, 1950, pp. 57--71.

\bibitem{dr}
V. Drinfeld, Quasi-Hopf algebras,  Algebra i Analiz  1  (1989),  no. 6, 114--148.

\bibitem{F}
L. Faddeev, Operator anomaly for the Gauss law,  Phys. Lett. {\bf B 145} (1984), no. 1-2, 81--84.

\bibitem{FS}
L. Faddeev, S. Shatashvili, Algebraic and Hamiltonian methods in the theory of nonabelian anomalies (Russian),
Teoret. Mat. Fiz. {\bf 60} (1984), no. 2, 206--217.

\bibitem{GK} V. Ginzburg, M. Kapranov, Koszul duality for operads. Duke Math. J. {\bf 76} (1994), no. 1, 203-272.

\bibitem{GS}
V. Guillemin, S. Sternberg, Supersymmetry and equivariant de Rham theory. Mathematics Past and Present. Springer-Verlag, Berlin, 1999.

\bibitem{HU}
S. Hu, B. Uribe, Extended manifolds and extended equivariant cohomology, J. Geom. Phys. {\bf 59} (2009), no. 1, 104--131.

\bibitem{yks}
Y. Kosmann-Schwarzbach, From Poisson algebras to Gerstenhaber algebras, Ann. Inst.
Fourier (Grenoble) 46 (1996), 1241--1272.

\bibitem{LMNS}
A. Losev, G. Moore, N. Nekrasov, S. Shatashvili, Central extensions of gauge groups revisited, Selecta Mathematica {\bf 4} No.1 (1998), 117-123.

\bibitem{Mickelsson}
J. Mickelsson, Chiral anomalies in even and odd dimensions, Commun. Math. Phys. {\bf 97} (1985) 361--370.

\bibitem{mibook}
J. Mickelsson, Current algebras and groups, Plenum Press, New York, 1989.

\bibitem{Vizman}
C. Vizman, The path group construction of Lie group extensions, Journal of Geometry and Physics {\bf 58} No.7 (2008), 860-873.

\end{thebibliography}
\end{document}